\documentclass[a4paper,12pt,reqno]{amsart}
\usepackage{amssymb,amsthm,amsmath}
\usepackage{mathtools}
\usepackage{amsfonts}
\usepackage{amscd}
\usepackage{cite}
\usepackage{amssymb}
\usepackage{enumerate}
\usepackage{bbm}
\usepackage{graphicx}
\allowdisplaybreaks
\usepackage{color}
\usepackage{amsbsy}
\usepackage{graphicx}
\usepackage{amsthm}
\usepackage{amsmath}
\usepackage{amsxtra}
\usepackage{mathrsfs}
\usepackage{bbm}
\usepackage{dsfont}
\usepackage{enumitem}
\usepackage{comment}
\usepackage{xcolor}
\usepackage{float} 
\usepackage{url}
\usepackage{soul}
\usepackage[hidelinks]{hyperref}
\usepackage{ifthen}

\hypersetup{
    colorlinks=true,
    linkcolor=red,      
    citecolor=red,       
    urlcolor=cyan        
}

\newtheorem{theorem}{Theorem}[section]

\newtheorem{definition}{Definition}
\newtheorem{remark}[theorem]{Remark}

\newtheorem{lemma}[theorem]{Lemma}

\numberwithin{equation}{section}
\definecolor{red}{rgb}{1.0, 0.0, 0.0}
\newcommand{\Bea}{\begin{eqnarray*}}
    \newcommand{\Eea}{\end{eqnarray*}}
\newcommand{\Be} {\begin{equation*}}
    \newcommand{\Ee} {\end{equation*}}
\newcommand{\be} {\begin{equation}}
    \newcommand{\ee} {\end{equation}}
\newcommand{\bea} {\begin{eqnarray}}
    \newcommand{\eea} {\end{eqnarray}}

\newcommand{\R}{\mathbb{R}}

\usepackage{color}

\title[]{Weighted and Sobolev Estimates for Fractional Schrödinger Operators with Reverse H\"{o}lder Potentials}

 \author[A. Dasgupta]{Aparajita Dasgupta}
 \address{
     Aparajita Dasgupta:
     \endgraf
     Department of Mathematics
    \endgraf
    Indian Institute of Technology Delhi
    \endgraf
    Hauz Khas, New Delhi - 110016
     \endgraf
     India
    \endgraf
    {\it E-mail address:} {\rm adasgupta@maths.iitd.ac.in}
 }

 \author[D. Garg]{Drishti Garg}
 \address{
     Drishti Garg:
     \endgraf
    Department of Mathematics
   \endgraf
    Indian Institute of Technology Delhi
    \endgraf
   Hauz Khas, New Delhi - 110016
   \endgraf
    India
    \endgraf
    {\it E-mail address:} {\rm maz248543@iitd.ac.in}
 }

\vspace{1cm}
\allowdisplaybreaks

 \keywords{ Fractional Schr\"{o}dinger operators, reverse H\"{o}lder potential, critical radius, weighted norm inequalities, semigroup maximal operators, fractional integrals, fractional Riesz transforms, Sobolev estimates.}
 \subjclass[2020]{42B25, 42B20, 35J10, 35R11, 46E35}

\date{\today}

\begin{document}

\begin{abstract}
We study weighted and Sobolev estimates for two fractional Schr\"odinger
operators on $\R^n$, where $n\geq3$ and $0<s<1$. For the additive operator
$L=(-\Delta)^s+V$, with a nonnegative, nontrivial potential $V$ belonging to a suitable reverse H\"older class, $RH_q$, we establish a critical-radius form inequality
and decay estimates for the semigroup mass. These yield a heat-kernel bound
combining polynomial off-diagonal decay with a fixed positive exponent of
decay relative to the fractional critical radius $\rho_s$. A decomposition
at this scale gives weighted strong-type estimates for the semigroup maximal
operator and weighted Hardy--Littlewood--Sobolev estimates for
$L^{-\alpha/2}$, $0<\alpha s<n$, together with the corresponding endpoint
weak-type estimates. The admissible weights satisfy explicit growth
restrictions determined by the kernel decay, and the fractional integration
order is $\alpha s$. We also consider the spectral operator
$Q=(-\Delta+V)^s$. Assuming $V\in RH_{q_1}$ with
$n/(1+s)\leq q_1<n$ and classical critical radius $\rho_1(x)\geq1$,
we prove that $\nabla Q^{-1/2}$ maps $H^{1-s,p}(\R^n)$ into
$L^p(\R^n)$ for $1<p\leq p_0$, where
$1/p_0=1/q_1-1/n$. By duality, its adjoint
 maps $L^{p'}(\R^n)$ into
$H^{s-1,p'}(\R^n)$ for $p_0'\leq p'<\infty$.
\end{abstract}
\maketitle

\tableofcontents

\section{Introduction}\label{intro}

A central problem in the harmonic analysis of Schr\"odinger operators is
to understand how a potential affects the size, smoothing, and spatial
regularity of the associated operators. Semigroup maximal functions,
fractional integrals, and Riesz transforms provide complementary ways of
studying these questions. For the classical Schr\"odinger operator
\[
P=-\Delta+V
\qquad\text{on }\mathbb{R}^n,\quad n\geq3,
\]
a substantial theory has been developed for nonnegative potentials
satisfying reverse H\"older conditions; see
\cite{MR2377893,MR1343560,MR2689454}.
Related developments concerning Hardy spaces, BMO spaces, and weighted
estimates can be found in
\cite{MR2115447,MR1971563,MR2032068,MR3408692,MR2178237,tang2015weighted}.

An influential approach, developed by Shen \cite{MR1343560}, assumes that
$V\in RH_q$ for $q\geq n/2$. Recall that a nonnegative function
$V\in L^q_{\mathrm{loc}}(\mathbb{R}^n)$ belongs to the reverse H\"older
class $RH_q$, $1<q<\infty$, if
\begin{equation}\label{reverse1}
\left(
\frac{1}{|B(x,r)|}
\int_{B(x,r)}V(y)^q\,dy
\right)^{1/q}
\leq
\frac{C}{|B(x,r)|}
\int_{B(x,r)}V(y)\,dy
\end{equation}
for every ball $B(x,r)\subset\mathbb{R}^n$, with a constant independent of
$x$ and $r$. For $q=\infty$, the left-hand side is replaced by the
essential supremum of $V$ on the ball. When $V$ is not identically zero,
the associated classical critical radius is
\begin{equation}\label{auxiliaryfn1}
\rho_1(x)=
\sup\left\{
r>0:
\frac{1}{r^{n-2}}
\int_{B(x,r)}V(y)\,dy\leq1
\right\}.
\end{equation}
This radius compares the average size of the potential with the kinetic
scale $r^{-2}$ of the Laplacian. It supplies a spatial scale, depending
on the base point, for separating local behaviour from effects at larger
distances.

The critical-radius viewpoint also suggests how classical weight
conditions should be modified in the presence of a potential.
Bongioanni, Harboure, and Salinas \cite{MR2720705} introduced
classes $A_p^{\rho_1,\theta}$, $1\leq p<\infty$ and $\theta\geq0$,
that impose Muckenhoupt-type control on critical balls and allow
additional growth on larger balls. For $\theta=0$, these conditions
reduce to the classical $A_p$ conditions.
Their work established weighted bounds for several operators associated
with $P$ and provided localization tools for transferring classical
estimates to the Schr\"odinger setting. Related extensions to the
Heisenberg group were studied in
\cite{MR1670222,MR2824565,MR2921899,MR4716731}.

These developments lead naturally to corresponding questions for
fractional operators. The fractional Laplacian is a basic model of
nonlocal diffusion and appears in potential theory and probability
\cite{MR1918242,MR30151,MR1703823,MR2569321,MR1974415}, as well as in fractional
calculus and harmonic analysis
\cite{samko1993fractional,MR1623282,MR1387588,MR290095,MR252961}.
For $u\in\mathcal{S}(\mathbb{R}^n)$ and $0<s<1$, it is defined by
\begin{equation}\label{fourierfract}
\widehat{(-\Delta)^s u}(\xi)
=
|\xi|^{2s}\widehat{u}(\xi).
\end{equation}
Equivalently,
\[
(-\Delta)^s u(x)
=
c_{n,s}\,\operatorname{p.v.}
\int_{\mathbb{R}^n}
\frac{u(x)-u(y)}{|x-y|^{n+2s}}\,dy,
\]
where
\[
c_{n,s}
=
\frac{2^{2s}\Gamma((n+2s)/2)}
{\pi^{n/2}|\Gamma(-s)|}.
\]
For further descriptions and equivalent definitions, see
\cite{MR3916700,MR3613319,MR4043885,MR3965397}.
The integral representation makes the nonlocal interaction explicit.
Moreover, the associated heat kernel has polynomial tails, so arguments
that rely on Gaussian decay must be reconsidered.

There are two natural ways to introduce this fractional structure into
Schr\"odinger theory. One may replace the kinetic term by a fractional
Laplacian and then add the potential, obtaining
\begin{equation}\label{operatorL}
L=(-\Delta)^s+V.
\end{equation}
Alternatively, one may take a fractional power of the full
Schr\"odinger operator through its spectral calculus:
\begin{equation}\label{operatorQ}
\mathcal{Q}=P^s=(-\Delta+V)^s.
\end{equation}
These constructions are generally different. In the first, the potential perturbs a nonlocal diffusion operator; in the second, the potential is incorporated before the fractional power is taken. The present paper addresses weighted maximal and fractional integration estimates for \(L\) and Sobolev estimates for the derivative operator \(\nabla Q^{-1/2}\). The assumptions on the potential are specified separately for the two parts.

The question pursued in the first part of this paper is how
to combine potential-dependent estimates with weight conditions
adapted to the fractional kinetic scale. More precisely, for $L$ we assume that $V$ is nonnegative, non
identically zero, and belongs to $RH_q$ for some $q>n/(2s)$.
Throughout the paper, \(n\geq3\) and \(0<s<1\). For \(L\), we assume that \(V\geq0\), \(V\not\equiv0\), and \(V\in RH_q\) for some finite \(q>n/(2s)\). The relevant fractional critical radius, introduced in \cite{MR3917789}, is
\begin{equation}\label{fractionalcriticalradius}
\rho_s(x)=
\sup\left\{
r>0:
\frac{1}{r^{n-2s}}
\int_{B(x,r)}V(y)\,dy\leq1
\right\}.
\end{equation}
The exponent $n-2s$ reflects the balance between the average potential
and the kinetic scale $r^{-2s}$. This explains why the classical radius
\eqref{auxiliaryfn1} is replaced by \eqref{fractionalcriticalradius}
when treating $L$. Using this scale, we consider the classes
$A_p^{\rho_s,\theta}$ and their local counterparts
$A_p^{\rho_s,\mathrm{loc}}$. The weight construction follows the
critical-radius approach of \cite{MR2720705}, with the
localization now determined by the fractional operator. With the normalization used in Section \ref{Weights}, the condition \(w\in A_p^{\rho_s,\theta}\), \(1<p<\infty\), is
\[
\left(\int_B w\right)^{1/p}
\left(\int_B w^{-1/(p-1)}\right)^{1/p'}
\leq C|B|
\left(1+\frac r{\rho_s(x)}\right)^\theta,
\qquad B=B(x,r).
\]
At \(p=1\), the left-hand side is replaced by
\[
w(B)\operatorname*{ess\,sup}_{B}w^{-1}.
\]
The local classes require the usual Muckenhoupt condition only for \(r\leq\rho_s(x)\). In particular,
\[
A_p^{\rho_s,0}=A_p,
\qquad
A_p^{\rho_s,\theta}\subset A_p^{\rho_s,\mathrm{loc}}.
\]
We consider the semigroup maximal operator
\[
T^*f(x)=\sup_{t>0}|e^{-tL}f(x)|
\]
and the fractional integral
\[
\mathcal I_\alpha f=L^{-\alpha/2}f
=\frac1{\Gamma(\alpha/2)}
\int_0^\infty e^{-tL}f\,t^{\alpha/2}\,\frac{dt}{t},
\qquad 0<\alpha s<n,
\]
initially whenever the representation is well defined. To explain the classical estimates underlying our argument, write
\[
\begin{aligned}
T^*f &= \sup_{t>0}\left|e^{-t(-\Delta)^s}f\right|,\\
I_\gamma &= (-\Delta)^{-\gamma/2},
\qquad 0<\gamma<n.
\end{aligned}
\]
By the free kernel estimate \cite{MR1232192}, the operator $T^*$ is pointwise dominated by the Hardy–Littlewood maximal operator $M$, that is, \(T^*f\lesssim Mf\). Thus \(T^*\) is bounded on \(L^p(w)\) for \(1<p<\infty\), \(w\in A_p\), and from \(L^1(w)\) to \(L^{1,\infty}(w)\) for \(w\in A_1\). The classical weighted fractional integration estimate is
\[
\|I_\gamma f\|_{L^\nu(w^{\nu/p})}
\leq C\|f\|_{L^p(w)},
\]
where
\[
\frac1\nu=\frac1p-\frac\gamma n,
\qquad
1<p<\frac n\gamma,
\qquad
w^{\nu/p}\in A_{1+\nu/p'}.
\]
At \(p=1\), it has the weak-type form
\[
\|I_\gamma f\|_{L^{\nu_1,\infty}(w^{\nu_1})}
\leq C\|f\|_{L^1(w)},
\qquad
\nu_1=\frac n{n-\gamma},
\quad w^{\nu_1}\in A_1;
\]
see the classical estimates recalled in \cite{lacey2010sharp}. Since the free counterpart of \(L^{-\alpha/2}\) is \((-\Delta)^{-\alpha s/2}\), its effective fractional integration order is \(\gamma=\alpha s\).
For \(V\geq0\), domination by the free fractional semigroup already gives the corresponding estimates with classical weights. To treat adapted weights, one must also control their permitted growth beyond the critical scale. We obtain this control from a critical-radius form inequality and decay of the semigroup mass. In particular, if \(k_t^L(x,y)\) denotes the heat kernel, Lemma \ref{fractkernel2} gives
\[
\begin{aligned}
k_t^L(x,y)\leq {}&
Ct^{-n/(2s)}
\left(
1+\frac{\sqrt t}{\rho_s(x)^s}
+\frac{\sqrt t}{\rho_s(y)^s}
\right)^{-N_0}
\left(1+\frac{|x-y|}{t^{1/(2s)}}\right)^{-n-s},
\end{aligned}
\]
where
\[
N_0=\frac{s}{d_0(n+2s)}>0,
\qquad
\beta_0=s\min\{N_0,1\},
\]
and \(d_0\) is the critical-radius comparison exponent in Lemma \ref{fractauxiliary}. The fixed exponent \(N_0\) produces sufficient growth ranges for the weights in our main estimates. We do not claim that these ranges are optimal.

The spectral construction leads to a different regularity question. We study
\[
R=\nabla Q^{-1/2}=\nabla P^{-s/2},
\qquad
R^*=Q^{-1/2}\nabla^*=-P^{-s/2}\operatorname{div},
\]
where \(R^*\) acts on vector fields. At \(s=1\), \(R\) becomes the classical Schrödinger Riesz transform. Shen \cite{MR1343560} proved that
\[
\|\nabla P^{-1/2}f\|_{L^p}
\leq C_p\|f\|_{L^p},
\qquad
1<p\leq p_*,
\quad
\frac1{p_*}=\frac1{q_1}-\frac1n,
\]
when \(V\in RH_{q_1}\) and \(n/2\leq q_1<n\). For \(0<s<1\), the free operator \(\nabla(-\Delta)^{-s/2}\) has multiplier \(i\xi|\xi|^{-s}\), homogeneous of degree \(1-s\), and satisfies
\[
\|\nabla(-\Delta)^{-s/2}f\|_{L^p}
\leq C_p\|f\|_{H^{1-s,p}},
\qquad 1<p<\infty.
\]
This identifies the Sobolev order in our estimate for \(R\). Here \(H^{\sigma,p}(\mathbb R^n)\) denotes the Bessel potential space with norm
\[
\|f\|_{H^{\sigma,p}}
=\|(I-\Delta)^{\sigma/2}f\|_{L^p}.
\]
Related weighted estimates for semigroup maximal operators and fractional square functions associated with Schrödinger operators were obtained by Wang, Li, and Liu \cite{MR4664563}. Further fundamental-solution and \(L^p\) estimates for Schrödinger-type operators appear in \cite{MR4716731, MR4664563, MR2689454}. The potential assumptions for our spectral estimates are specified separately below, and the relevant critical radius is \(\rho_1\).

Our main results can now be summarized as follows.
\begin{enumerate}
\item
\emph{Weighted maximal estimates.}
Under the assumptions on $L$ above, we establish
\[
\|\mathcal{T}^*f\|_{L^p(w)}
\leq C\|f\|_{L^p(w)},
\qquad 1<p<\infty,
\]
for $w\in A_p^{\rho_s,\theta}$, with the restrictions on $\theta$
specified in Theorem \ref{main1}. At the endpoint, we obtain
\[
\|\mathcal{T}^*f\|_{L^{1,\infty}(w)}
\leq C\|f\|_{L^1(w)}
\]
for $w\in A_1^{\rho_s,\theta}$ under the corresponding restrictions.
Here $L^{p,\infty}(w)$ denotes the weak Lebesgue space with
respect to the measure $w(x)\,dx$.

\item
\emph{Weighted fractional integration.}
For $0<\alpha s<n$, we prove
\[
\|\mathcal{I}_\alpha f\|_{L^\nu(w^{\nu/p})}
\leq C\|f\|_{L^p(w)},
\qquad
\frac1\nu=\frac1p-\frac{\alpha s}{n},
\qquad
1<p<\frac{n}{\alpha s},
\]
whenever
\[
w^{\nu/p}\in A_{1+\nu/p'}^{\rho_s,\theta},
\]
with the weight-growth restrictions in Theorem \ref{main2}.
Writing $r_\alpha=n/(n-\alpha s)$, we also obtain
\[
\|\mathcal{I}_\alpha f\|_{L^{r_\alpha,\infty}(w^{r_\alpha})}
\leq C\|f\|_{L^1(w)}
\]
when $w^{r_\alpha}\in A_1^{\rho_s,\theta}$, subject to the
corresponding restrictions in that theorem.

\item
\emph{Sobolev estimates for the spectral fractional Riesz transform.}
Assume that $V\geq0$, $V\not\equiv0$, and
\[
V\in RH_{q_1},
\qquad
\frac{n}{1+s}\leq q_1<n,
\qquad
\rho_1(x)\geq1
\quad\text{for every }x\in\mathbb{R}^n.
\]
As stated in Theorem \ref{main3}, we obtain
\[
\|\mathcal{R}f\|_{L^p}
\leq C_p\|f\|_{H^{1-s,p}},
\qquad
1<p\leq p_0,
\qquad
\frac1{p_0}=\frac1{q_1}-\frac1n.
\]
By duality,
\[
\|\mathcal{R}^*F\|_{H^{s-1,p'}}
\leq C_p\|F\|_{L^{p'}},
\qquad p_0'\leq p'<\infty,
\]
where $F$ is a vector field.
\end{enumerate}
The endpoint \(q_1=n/(1+s)\) is included through the self-improving property of reverse H\"{o}lder classes.\\
The weighted results follow by decomposing each operator at the scale \(\rho_s(x)\). On critical balls, free-kernel domination and uniform weight extensions reduce the estimates to the localized classical bounds of Theorem \ref{localisation}. Outside these balls, the kernel estimates are combined with weighted bounds for averages over dilated critical balls. The resulting dyadic series converge when \(\theta<\beta_0\) for the maximal operator and when \(\theta(1-\alpha s/n)<\beta_0\) for the fractional integral. Thus the weight restrictions follow directly from the decay available in the nonlocal kernel estimates. For the spectral operator, resolvent estimates for \(P+\lambda\) instead yield a comparison with the free derivative kernel at the scale \(\rho_1(x)\), the assumption \(\rho_1\geq1\) controls the factor \(\rho_1^{s-1}\) arising in this comparison.

The contribution of the present work lies in the resulting mapping
estimates for these two fractional constructions. In the additive
setting, the weighted maximal and Hardy--Littlewood--Sobolev bounds
combine the fractional smoothing scale $\alpha s$ with the geometry
induced by $V$, including the corresponding endpoint weak-type
estimates. The admissible weight conditions retain local Muckenhoupt
control while permitting prescribed growth beyond the critical
scale. In the spectral setting, the Sobolev estimates describe
the balance between a full spatial derivative and a fractional
inverse of $P$, with the natural residual order $1-s$.

The fractional critical radius and the localization methods have
precedents in \cite{MR3917789,MR2720705}.
The present results use these tools to address weighted control of
the nonlocal semigroup and its fractional integrals, alongside
derivative estimates for the spectral model. This separates the
roles of the potential in the two problems: $\rho_s$ governs the
weighted size and smoothing estimates for $L$, whereas $\rho_1$
enters the kernel comparisons underlying the regularity estimates
for $\mathcal{Q}$. The need to accommodate polynomial kernel tails,
potential-dependent weight growth, and the positive Sobolev order
of the derivative operator explains the analytic content of this
extension of the classical theory.

The paper is organized as follows. Section \ref{prelim} recalls the fractional heat kernel, reverse H\"{o}lder potentials, critical-radius properties, and classical weights. Section \ref{Weights} develops the adapted weight classes and localization tools. Section \ref{semigroup} establishes semigroup generation, the Feynman–Kac representation, and free-kernel domination for \(L\). Section \ref{kernelmain} proves the critical-radius form inequality, semigroup mass decay, heat-kernel estimates, and local weighted bounds. Sections \ref{weighted estimates} and \ref{Fractional Integral} establish the weighted estimates for \(T^*\) and \(\mathcal I_\alpha\), respectively. Section \ref{Riesz transform} proves the Sobolev estimates associated with \(Q\). In the remainder of the paper, \(\rho\) denotes \(\rho_s\) in Sections 2-7 and \(\rho_1\) in Section 8.
    \section{Preliminaries}\label{prelim}

This section collects the analytic tools used throughout the first part
of the paper. We recall the scaling and decay of the fractional heat
kernel, the properties of reverse H\"older potentials and their critical
radius, and the classical weight conditions underlying our weighted
estimates. These preliminaries provide the framework for comparing the
free and perturbed operators and for separating their local and global
contributions. Throughout Sections~2--7, $\rho$ denotes the fractional
critical radius $\rho_s$ introduced in Section~1.

\subsection{Fractional heat kernel estimates}

We recall the basic properties of the fractional heat kernel, which
serves as the comparison kernel for the analysis of $L=(-\Delta)^s+V$.
Its scaling determines the characteristic diffusion length
$t^{1/(2s)}$, while its spatial decay provides the estimates needed
to control nonlocal contributions in the subsequent weighted arguments.

For $0<s<1$, the operator $-(-\Delta)^s$ generates the strongly
continuous contraction semigroup
\[
P_t=e^{-t(-\Delta)^s}, \qquad t\geq 0,
\]
on $L^p(\mathbb{R}^n)$ for $1\leq p<\infty$. The semigroup also acts
contractively on $L^\infty(\mathbb{R}^n)$; see
\cite{MR3613319,MR4043885}. By the Fourier
representation \eqref{fourierfract}, $P_t$ is the multiplier operator
with symbol $e^{-t|\xi|^{2s}}$. Consequently, for $t>0$,
\[
P_tf(x)=(p_t*f)(x)
       =\int_{\mathbb{R}^n}p_t(x-y)f(y)\,dy,
\]
where $p_t$ is a positive radial kernel satisfying
$\widehat{p_t}(\xi)=e^{-t|\xi|^{2s}}$. Fourier inversion gives
\[
p_t(x)=\frac{1}{(2\pi)^n}
\int_{\mathbb{R}^n}e^{-t|\xi|^{2s}}e^{ix\cdot\xi}\,d\xi,
\qquad t>0,\quad x\in\mathbb{R}^n.
\]
The kernel satisfies the scaling identity
\[
p_t(x)=t^{-n/(2s)}p_1\!\left(t^{-1/(2s)}x\right)
\]
and the estimate
\[
p_1(x) \lesssim (1+|x|)^{-n-2s};
\]
see \cite{MR2372358}. Combining these properties yields
\[
p_t(x)\lesssim
\frac{t}{\left(t^{1/(2s)}+|x|\right)^{n+2s}},
\qquad t>0,\quad x\in\mathbb{R}^n,
\]
where the comparison constant depends only on $n$ and $s$.
In particular, there exists $C=C(n,s)>0$ such that
\begin{equation}\label{fractkernel}
p_t(x)\leq
\frac{Ct}{\left(t^{1/(2s)}+|x|\right)^{n+2s}}.
\end{equation}
This estimate exhibits the two characteristic regimes: the kernel is
bounded above by a constant multiple of $t^{-n/(2s)}$ when $|x|\leq t^{1/(2s)}$, and by
$t|x|^{-n-2s}$ when $|x|>t^{1/(2s)}$.
For later use, we introduce the two-point notation
\[
p(t,x,y)=p_t(x,y):=p_t(x-y),
\qquad t>0,\quad x,y\in\mathbb{R}^n.
\]
Since $p_t$ is even, $p_t(x-y)=p_t(y-x)$, and hence
\[
p(t,x,y)=p(t,y,x).
\]

\subsection{Reverse H\"older potentials and the critical radius}

Let $n\geq 3$ and $0<s<1$. Throughout this subsection, we assume
that $V$ is a nonnegative, nontrivial potential belonging to $RH^q$
for some $q>n/(2s)$; that is, $V$ satisfies
\eqref{reverse1}. H\"older's inequality gives the inclusion
\[
RH^q\subseteq RH^p, \qquad 1<p\leq q<\infty.
\]
Moreover, the reverse H\"older condition is self-improving: if
$V\in RH^q$, then $V\in RH^{q+\epsilon}$ for some $\epsilon>0$,
depending only on $n$, $q$, and the reverse H\"older constant of $V$.

To describe the spatial scale induced by the potential, we use the
fractional auxiliary function $m_V(x)$ introduced in \cite{MR3917789}, which
extends the classical auxiliary function \eqref{auxiliaryfn1}
from \cite{MR1343560}. Basically, we denote $m_V(x)=\frac{1}{\rho(x)}$ and we have
\begin{equation}\label{auxilaryfunc}
\rho(x)
=\sup\left\{
r>0:
\frac{1}{r^{n-2s}}\int_{B_r(x)}V(z)\,dz\leq 1
\right\},
\qquad x\in\mathbb{R}^n.
\end{equation}

The exponent $n-2s$ reflects the balance between the average size of
$V$ and the fractional kinetic scale $r^{-2s}$. Under the above
assumptions,
\[
0<\rho(x)<\infty,
\qquad x\in\mathbb{R}^n.
\]

The reverse H\"older condition also implies that the measure
$V(z)\,dz$ is doubling. Thus there exists $c_1\geq 1$, depending
only on $n$, $q$, and the reverse H\"older constant of $V$, such that
\[
\int_{B_{2r}(x)}V(z)\,dz
\leq c_1\int_{B_r(x)}V(z)\,dz,
\qquad x\in\mathbb{R}^n,\quad r>0.
\]

We record two comparison and covering results that will be used to
localize the subsequent estimates; see
\cite{MR3917789,MR4423270}.

\begin{lemma}\label{fractauxiliary}
Under the above assumptions on $V$, there exist constants $C_0\geq 1$
and $d_0>0$, depending only on $n$, $s$, $q$, and the reverse
H\"older constant of $V$, such that the following properties hold:
\begin{enumerate}
\item[(i)] For all $x,y\in\mathbb{R}^n$ satisfying
$|x-y|\leq C_0/m_V(x)$,
\[
m_V(x)\sim m_V(y),
\]
with comparison constants depending only on the same parameters.

\item[(ii)] For all $x,y\in\mathbb{R}^n$,
\[
m_V(y)\leq
C_0\bigl(1+|x-y|m_V(x)\bigr)^{d_0}m_V(x).
\]

\item[(iii)] For all $x,y\in\mathbb{R}^n$,
\[
m_V(y)\geq
\frac{m_V(x)}
{C_0\bigl(1+|x-y|m_V(x)\bigr)^{d_0/(d_0+1)}}.
\]
\end{enumerate}
\end{lemma}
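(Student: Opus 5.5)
We follow Shen's argument for the classical auxiliary function, with the exponent $n-2$ replaced by $n-2s$. Throughout, write
\[
\Psi(x,r)=\frac1{r^{n-2s}}\int_{B(x,r)}V.
\]

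\emph{Step 1: growth of $\Psi$ in $r$.} For $0<r<R$, the reverse H\"older inequality with $q>n/(2s)$ and H\"older's inequality give
\[
\int_{B(x,r)}V\le C\left(\frac rR\right)^{n/q}\int_{B(x,R)}V .
\]
Hence
\[
\Psi(x,r)\le C\left(\frac rR\right)^{2s-n/q}\Psi(x,R),
\]
and $\delta:=2s-n/q>0$. In the other direction, the doubling constant $c_1$ gives
\[
\Psi(x,R)\le C\left(\frac Rr\right)^{k_0}\Psi(x,r),\qquad k_0=\log_2 c_1-(n-2s).
\]
Together these show that $\Psi(x,\cdot)$ is continuous, tends to $0$ as $r\to0$, and tends to $\infty$ as $r\to\infty$ (here $V\not\equiv0$ is used). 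Consequently $0<\rho(x)<\infty$ and $\Psi(x,\rho(x))=1$.

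\emph{Step 2: part (i).} Let $|x-y|\le C_0\rho(x)$. The ball $B(y,\rho(x))$ is contained in $B(x,(1+C_0)\rho(x))$, so doubling gives $\Psi(y,\rho(x))\le C$. The lower bound in Step 1 then forces $\rho(y)\ge c\,\rho(x)$. Conversely, $B(x,\rho(x))\subset B(y,(1+C_0)\rho(x))$, so $\Psi(y,(1+C_0)\rho(x))\gtrsim 1$, which forces $\rho(y)\le C\rho(x)$. This gives $m_V(x)\sim m_V(y)$.

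\emph{Step 3: part (ii).} This is the key case, where $|x-y|=R\gg\rho(x)$. Since $B(y,\rho(y))\subset B(x,R+\rho(y))$, I split into two subcases.
\begin{itemize}
\item If $\rho(y)\le R$, then $\int_{B(y,\rho(y))}V\le\int_{B(x,2R)}V$. Step 1 bounds this by $C(R/\rho(x))^{k_0}\rho(x)^{n-2s}\cdot\rho(x)^{-(n-2s)}R^{n-2s}$, that is, $\lesssim R^{n-2s}(R/\rho(x))^{k_0}$. On the other hand, Step 1 applied at $y$ between the radii $\rho(y)$ and $R$ gives
\[
1=\Psi(y,\rho(y))\ge c\,(\rho(y)/R)^{-\delta}\,\Psi(y,R)\ \text{ after rearranging.}
\]
Combining the two yields $\rho(y)\gtrsim\rho(x)\,(R/\rho(x))^{-d_0}$, i.e. $m_V(y)\lesssim m_V(x)(1+Rm_V(x))^{d_0}$, with $d_0=k_0/\delta$ (up to adjusting the exponent).
\item If $\rho(y)>R$, then $\rho(y)$ already dominates the relevant scale, and the bound $\rho(y)\gtrsim\rho(x)$ is trivial, by the argument of part (i).
\end{itemize}

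\emph{Step 4: part (iii).} Part (iii) follows from (ii) with the roles of $x$ and $y$ swapped. Apply (ii) at the base point $y$:
\[
m_V(x)\le C\bigl(1+Rm_V(y)\bigr)^{d_0}m_V(y).
\]
If $Rm_V(y)\le1$ this gives $m_V(x)\lesssim m_V(y)$ directly. Otherwise $m_V(x)\lesssim R^{d_0}m_V(y)^{d_0+1}$, which solves to
\[
m_V(y)\gtrsim m_V(x)^{1/(d_0+1)}R^{-d_0/(d_0+1)}=m_V(x)\,\bigl(Rm_V(x)\bigr)^{-d_0/(d_0+1)}.
\]
This is exactly (iii).

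The main obstacle is the exponent bookkeeping in Step 3: $d_0$ must come out uniform and depend only on $c_1$, $q$, $n$ and $s$. The key input making this possible is $q>n/(2s)$, which guarantees $\delta>0$.
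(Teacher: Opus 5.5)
The paper does not prove this lemma; it quotes it from the literature. Your plan is the standard Shen-type argument: growth and doubling bounds for $\Psi$, then (i), then (ii), then (iii) from (ii) with $x,y$ swapped. Steps 1, 2 and 4 are fine, apart from a typo in Step 1. There the first bound should read $\int_{B(x,r)}V\le C(r/R)^{n-n/q}\int_{B(x,R)}V$; with exponent $n/q$ your consequence $\Psi(x,r)\le C(r/R)^{2s-n/q}\Psi(x,R)$ would not follow.

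\textbf{The gap is in Step 3.} This is the key step, since (iii) is derived from it, and as written it does not work.

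\emph{First inequality.} In the subcase $\rho(y)\le R$ you bound $\int_{B(y,\rho(y))}V=\rho(y)^{n-2s}$ from above. That is only an \emph{upper} bound on $\rho(y)$, and it is already implied by $\rho(y)\le R$. Part (ii) needs a \emph{lower} bound on $\rho(y)$.

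\emph{Second inequality.} Your display $1\ge c(\rho(y)/R)^{-\delta}\Psi(y,R)$ has the direction reversed. Taken literally it says $\Psi(y,R)\le C(\rho(y)/R)^{\delta}\le C$, which is false, because $\Psi(y,R)>1$ whenever $R>\rho(y)$. Step 1 applied at $y$ with radii $\rho(y)<R$ actually gives $1=\Psi(y,\rho(y))\le C(\rho(y)/R)^{\delta}\Psi(y,R)$, that is, $\Psi(y,R)\ge c(R/\rho(y))^{\delta}$.

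So neither of your two inequalities bounds $\rho(y)$ from below, and combining them does not produce (ii).

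\textbf{The repair.} It uses only ingredients you already have and is the same as the first half of your Step 2, with radius $R$ in place of $\rho(x)$. Bound the integral over $B(y,R)$ instead of $B(y,\rho(y))$.
\begin{enumerate}
\item[(a)] Since $B(y,R)\subset B(x,2R)$ and $\Psi(x,\rho(x))=1$, your doubling bound gives $\Psi(y,R)\le 2^{n-2s}\Psi(x,2R)\le C(R/\rho(x))^{k_0}$.
\item[(b)] The correctly oriented lower bound is $\Psi(y,R)\ge c(R/\rho(y))^{\delta}$.
\item[(c)] Together, $(R/\rho(y))^{\delta}\le C(R/\rho(x))^{k_0}$. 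Hence $\rho(y)\ge c\,R\,(\rho(x)/R)^{k_0/\delta}$.
\item[(d)] Since $R\ge\rho(x)$, this is at least $c\,\rho(x)\,(1+R/\rho(x))^{-k_0/\delta}$. That is (ii) with $d_0=k_0/\delta$.
\end{enumerate}
The subcase $\rho(y)>R$ is trivial, as you say, since then $\rho(y)>R\ge\rho(x)$. The range $R\le C_0\rho(x)$ is covered by (i). With this correction, your Step 4 yields (iii) exactly as you wrote it.
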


The comparison properties of the critical radius yield a covering
of $\mathbb{R}^n$ by critical balls whose dilations have controlled
overlap. This allows local estimates to be combined over the whole
space.

\begin{lemma}\label{dyadicdec}
There exists a sequence of points $\{x_j\}_{j\geq 1}$ in
$\mathbb{R}^n$ such that the critical balls
\[
Q_j=B(x_j,\rho(x_j)), \qquad j\geq 1,
\]
satisfy the following properties:
\begin{enumerate}
\item[(i)]
\[
\bigcup_{j\geq 1}Q_j=\mathbb{R}^n.
\]

\item[(ii)] There exist constants $C>0$ and $N_1>0$, depending
only on $n$ and the comparison constants of $\rho$, such that
\[
\sum_{j\geq 1}\chi_{\sigma Q_j}(x)
\leq C\sigma^{N_1},
\qquad x\in\mathbb{R}^n,\quad \sigma\geq 1.
\]
Here $\sigma Q_j=B(x_j,\sigma\rho(x_j))$, and the constants
$C$ and $N_1$ are independent of $\sigma$.
\end{enumerate}
\end{lemma}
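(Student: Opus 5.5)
The plan is a Vitali-type selection at the scale of $\rho$, with bounded overlap obtained from the comparison estimates of Lemma \ref{fractauxiliary}.

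\emph{Construction of the sequence.} Pick points $\{x_j\}$ maximal with respect to the separation condition
\[
|x_i-x_j|\geq \tfrac12\max\{\rho(x_i),\rho(x_j)\}, \qquad i\neq j.
\]
One way to do this is to exhaust $\mathbb{R}^n$ by the balls $B(0,k)$. On each compact set $\rho$ is bounded above and below by Lemma \ref{fractauxiliary}(ii)--(iii), so only finitely many points are added at each stage, and the sequence is countable.

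\emph{Covering, part (i).} Let $x\in\mathbb{R}^n$. By maximality, $x$ cannot be added to the family, so there is some $j$ with $|x-x_j|<\tfrac12\max\{\rho(x),\rho(x_j)\}$. If the maximum is $\rho(x_j)$, then $x\in Q_j$ at once. If instead the maximum is $\rho(x)$, then $|x-x_j|<\rho(x)/2$. Lemma \ref{fractauxiliary}(i) then gives $\rho(x)\sim\rho(x_j)$. I will arrange the separation constant (replacing $1/2$ by a small $\epsilon$ chosen in terms of $C_0$) so that this forces $|x-x_j|<\rho(x_j)$, and hence $x\in Q_j$.

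\emph{Bounded overlap, part (ii).} Fix $x$ and $\sigma\geq1$, and let $J$ be the set of indices $j$ with $x\in\sigma Q_j$, so that $|x-x_j|<\sigma\rho(x_j)$. The first task is to compare $\rho(x_j)$ with $\rho(x)$.
\begin{itemize}
\item Applying Lemma \ref{fractauxiliary}(ii) with base point $x_j$ gives
\[
\rho(x)\geq C_0^{-1}\bigl(1+|x-x_j|/\rho(x_j)\bigr)^{-d_0}\rho(x_j),
\qquad\text{so}\qquad \rho(x_j)\lesssim \sigma^{d_0}\rho(x).
\]
\item Applying Lemma \ref{fractauxiliary}(ii) with base point $x$ gives $\rho(x_j)\gtrsim(1+|x-x_j|/\rho(x))^{-d_0}\rho(x)$. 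Since $|x-x_j|\lesssim\sigma^{1+d_0}\rho(x)$, this yields $\rho(x_j)\gtrsim\sigma^{-d_0(1+d_0)}\rho(x)$.
\end{itemize}
Consequently, for every $j\in J$,
\[
x_j\in B\bigl(x,C\sigma^{1+d_0}\rho(x)\bigr), \qquad \rho(x_j)\geq c\,\sigma^{-a}\rho(x),
\]
with $a=d_0(1+d_0)$. The separation condition makes the balls $B(x_j,\epsilon\rho(x_j)/2)$, $j\in J$, pairwise disjoint. Each of them has volume at least $c(\sigma^{-a}\rho(x))^n$, and all of them lie in $B(x,C'\sigma^{1+d_0}\rho(x))$. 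A volume comparison therefore gives
\[
\#J\lesssim \sigma^{n(1+d_0+a)}.
\]
This proves (ii) with $N_1=n(1+d_0)^2$ and with constants independent of $\sigma$.

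The main obstacle is the bookkeeping of the comparison exponents, in particular in applying Lemma \ref{fractauxiliary}(ii) in both directions. One could use (iii) instead to get a slightly better $N_1$, but any finite exponent suffices for the statement. A secondary point is checking that the separation constant is compatible with the covering step; this uses only part (i) of Lemma \ref{fractauxiliary}.
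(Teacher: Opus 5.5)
Your proposal is correct: it is the standard maximal $\rho$-separated family argument, with covering from maximality plus the local comparability of $\rho$, and bounded overlap from disjoint balls $B(x_j,\epsilon\rho(x_j)/2)$ counted by volume after comparing $\rho(x_j)$ with $\rho(x)$ through Lemma~\ref{fractauxiliary}. The paper gives no proof of its own and defers to \cite{MR3917789,MR4423270}, where essentially this argument is used. Your choice of $\epsilon$ only needs $\epsilon\le 1$ and $\epsilon$ smaller than the lower comparison constant from part (i); for instance, $\epsilon C_0(1+\epsilon)^{d_0}<1$ suffices.
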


\subsection{Muckenhoupt weights and weighted Lebesgue spaces}

We recall the classical Muckenhoupt classes and fix our notation for
weighted Lebesgue spaces. These definitions provide the framework for
the potential-dependent weight classes introduced in the next section.

A weight is a nonnegative locally integrable function $w$ on
$\mathbb{R}^n$. For a measurable set $E\subseteq\mathbb{R}^n$, we write
\[
w(E):=\int_E w(x)\,dx.
\]
In the definitions of the Muckenhoupt classes below, weights are
assumed to be positive almost everywhere.

Let $1<p<\infty$ and let $p'=p/(p-1)$ denote the H\"older conjugate
of $p$. A weight $w$ belongs to the Muckenhoupt class $A_p$ if there
exists a constant $C>0$ such that
\begin{equation}\label{Ap1}
\left(\int_B w(x)\,dx\right)^{1/p}
\left(\int_B w(x)^{-1/(p-1)}\,dx\right)^{1/p'}
\leq C|B|
\end{equation}
for every ball $B\subset\mathbb{R}^n$, with $C$ independent of $B$.
At the endpoint $p=1$, a weight $w$ belongs to $A_1$ if
\begin{equation}\label{Ap2}
w(B)\,\operatorname*{ess\,sup}_{x\in B}w(x)^{-1}
\leq C|B|
\end{equation}
for every ball $B\subset\mathbb{R}^n$, again with a constant
independent of $B$.

\begin{definition}
Let $0<p<\infty$ and $w$ be a weight on $\mathbb{R}^n$.
The weighted Lebesgue space $L^p(\mathbb{R}^n,w)$ consists of all
measurable functions $f$ for which
\[
\|f\|_{L^p(\mathbb{R}^n,w)}
:=\left(\int_{\mathbb{R}^n}|f(x)|^p w(x)\,dx\right)^{1/p}
<\infty,
\]
where functions that agree almost everywhere with respect to the
measure $w(x)\,dx$ are identified. This expression defines a norm
when $1\leq p<\infty$ and a quasi-norm when $0<p<1$.
\end{definition}

When no ambiguity arises, we write $L^p(w)$ in place of
$L^p(\mathbb{R}^n,w)$.
\section{Adapted weights and localization}\label{Weights}
The critical radius suggests weight conditions that reflect the spatial
scale induced by the potential. Following the construction in
\cite{MR2720705}, we consider the classes $A_p^{\rho,\theta}$
and $A_p^{\rho,\mathrm{loc}}$ with the fractional critical radius $\rho$.
The former retain Muckenhoupt control on critical balls while allowing
prescribed growth on larger balls. 
We establish the stability of the
local classes under fixed dilations of the critical radius and recall
the weight-extension and localization tools needed below. Together,
these properties allow classical weighted estimates to be applied
uniformly to the local parts of the operators under consideration.

\begin{definition}\label{weight class}
Let \(w\) be a nonnegative locally integrable function on \(\mathbb{R}^{n}\).
\begin{enumerate}
\item[(i)] For \(1<p<\infty\), we say that a weight \(w\) belongs to the
class \(A_{p}^{\rho,\theta}\) if there exists a positive constant \(C\)
such that
\begin{equation}\label{Ap3}
\left(
\int_{B}w(x)\,dx
\right)^{1/p}
\left(
\int_{B}w(x)^{-1/(p-1)}\,dx
\right)^{1/p'}
\leq
C|B|\left(1+\frac{r}{\rho(x)} \right)^{\theta},
\end{equation}
holds for all balls \(B=B(x,r) \subset \mathbb{R}^n\) and $p'$ is the conjugate exponent of $p$.
\item[(ii)] For \(p=1\), we say that a weight \(w\) belongs to the class
\(A_{1}^{\rho,\theta}\) if there exists a positive constant \(C\) such that,
\begin{equation}\label{Ap4}
w(B) \sup_{B} w^{-1}
\leq
C|B|\left(1+\frac{r}{\rho(x)} \right)^{\theta},
\end{equation}
holds for every ball \(B=B(x,r)\) and where
\begin{equation}\label{weightmeasure}
    w(B)=\int_{B}|w(y)|\,dy.
\end{equation}
\end{enumerate}
\end{definition}
For $p \geq 1$, we write $A_p^{\rho,\infty}=\cup_{\theta \geq 0} A_p^{\rho,\theta}$ and for each $1\leq p<\infty$, the classes $A_p^{\rho,\theta}$ are
increasing with respect to $\theta$:
\[
A_p^{\rho,\theta_1}\subseteq A_p^{\rho,\theta_2},
\qquad 0\leq\theta_1\leq\theta_2.
\]
Moreover, $A_p^{\rho,0}=A_p$, so the classical Muckenhoupt classes
are recovered when $\theta=0$. In addition, the inclusion
$A_p\subseteq A_p^{\rho,\infty}$ is strict. Indeed,
taking $\rho\equiv 1$ and $w(x)=1+|x|^\gamma$, we have
$w\in A_p^{\rho,\infty}$ but $w\notin A_p$ whenever
$\gamma>n(p-1)$.
To describe the behaviour of weights
below the critical scale, we introduce the corresponding local
classes.

\begin{definition}
Let $1\leq p<\infty$ and let $w$ be a weight on $\mathbb{R}^n$
that is positive almost everywhere.
\begin{enumerate}
\item[(i)] If $1<p<\infty$, we say that $w\in A_p^{\rho,\mathrm{loc}}$
if there exists a constant $C>0$ such that
\[
\left(\int_B w(y)\,dy\right)^{1/p}
\left(\int_B w(y)^{-1/(p-1)}\,dy\right)^{1/p'}
\leq C|B|
\]
for every ball $B=B(x,r)$ with $0<r\leq\rho(x)$, where
$p'=p/(p-1)$.

\item[(ii)] We say that $w\in A_1^{\rho,\mathrm{loc}}$ if there
exists a constant $C>0$ such that
\[
w(B)\,\operatorname*{ess\,sup}_{y\in B}w(y)^{-1}
\leq C|B|
\]
for every ball $B=B(x,r)$ with $0<r\leq\rho(x)$.
\end{enumerate}
In both cases, the constant $C$ is independent of $x$ and $r$.
\end{definition}

We also introduce a local doubling condition, which compares the
weighted measures of nested balls of comparable radii within the
critical scale.

\begin{definition}
A weight $w$ on $\mathbb{R}^n$ belongs to $D_\rho$ if there exists
a constant $C>0$ such that
\[
w(B_0)\leq Cw(B_1)
\]
whenever
\[
B_0=B(x_0,r),\qquad B_1=B(x_1,r/2),\qquad
B_1\subseteq B_0,\qquad 0<r\leq\rho(x_0).
\]
The constant $C$ is independent of the choice of $B_0$ and $B_1$.
\end{definition}

Applying the local Muckenhoupt condition on $B_0$ and using
H\"older's inequality, we obtain
\[
A_p^{\rho,\mathrm{loc}}\subseteq D_\rho,
\qquad 1\leq p<\infty.
\]
For $\beta>0$, the class $D_{\beta\rho}$ is defined by replacing
$\rho$ with $\beta\rho$ in the preceding definition. The following
lemma shows that $D_\rho$ is stable under fixed dilations of the
critical radius.

\begin{lemma}\label{betarho}
For every $\beta>1$,
\[
D_\rho=D_{\beta\rho}.
\]
The doubling constant for $D_{\beta\rho}$ may depend on $\beta$.
\end{lemma}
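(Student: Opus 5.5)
The inclusion \(D_{\beta\rho}\subseteq D_\rho\) is immediate for \(\beta>1\). Indeed, any admissible pair \(B_0=B(x_0,r)\), \(B_1=B(x_1,r/2)\subseteq B_0\) with \(r\leq\rho(x_0)\) also satisfies \(r\leq\beta\rho(x_0)\). So the doubling inequality for \(D_{\beta\rho}\) applies with the same constant. The content of the lemma is therefore the reverse inclusion \(D_\rho\subseteq D_{\beta\rho}\). Fix \(w\in D_\rho\) with constant \(C\), and take \(B_0=B(x_0,r)\), \(B_1=B(x_1,r/2)\subseteq B_0\) with \(\rho(x_0)<r\leq\beta\rho(x_0)\); the case \(r\le\rho(x_0)\) is already covered. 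We must show \(w(B_0)\leq C_\beta w(B_1)\).

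\textbf{Step 1 (controlled chain of balls).} Set \(r_0=\rho(x_0)/2\). By Lemma \ref{fractauxiliary}(i) and (ii), every \(z\in B_0\) satisfies \(\rho(z)\geq c_\beta\,\rho(x_0)\). Here \(c_\beta>0\) depends only on \(\beta\) and the constants \(C_0,d_0\), since \(|z-x_0|m_V(x_0)\leq\beta\). Put \(\delta=\min\{c_\beta,1\}\,\rho(x_0)/4\). Then every ball of radius at most \(2\delta\) centred at a point \(z\in B_0\) is a critical ball for \(\rho\), because \(2\delta\leq\rho(z)\).

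\textbf{Step 2 (covering and chaining).} Cover \(B_0\) by balls \(B(z_i,\delta)\), \(z_i\in B_0\). The number of balls is at most \(K\lesssim (r/\delta)^n\lesssim_\beta 1\). Next, for each \(i\), join \(z_i\) to \(x_1\) by a segment inside \(B_0\); this is possible by convexity since \(x_1\in B_0\). Along the segment choose points \(y_0=z_i,\dots,y_m=x_1\) with consecutive distance at most \(\delta/2\), where \(m\lesssim r/\delta\lesssim_\beta1\). For consecutive points we have \(B(y_{k+1},\delta/2)\subseteq B(y_k,\delta)\). Applying the \(D_\rho\) condition with \(x_0\) replaced by \(y_k\) and radius \(\delta\leq\rho(y_k)\) gives
\[
w(B(y_k,\delta))\le C\,w(B(y_{k+1},\delta/2))\le C\,w(B(y_{k+1},\delta)).
\]
Iterating along the chain yields \(w(B(z_i,\delta))\leq C^{m}w(B(x_1,\delta))\).

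\textbf{Step 3 (conclusion).} It remains to compare \(w(B(x_1,\delta))\) with \(w(B_1)\). Since \(r/2>\rho(x_0)/2\geq 2\delta\), we have \(B(x_1,\delta)\subseteq B_1\). Summing over the covering then gives
\[
w(B_0)\leq\sum_i w(B(z_i,\delta))\leq K C^{m}\,w(B(x_1,\delta))\leq KC^m\,w(B_1),
\]
which is the required inequality with a constant depending on \(\beta\).

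The main obstacle is the bookkeeping in Steps 1 and 2. One has to check that every ball used in the chain is genuinely admissible for \(D_\rho\): its radius must be at most \(\rho\) of its centre, and the smaller ball must be contained in the larger. Lemma \ref{fractauxiliary} guarantees that \(\rho\) does not degenerate on \(B_0\), uniformly in terms of \(\beta\). The counts \(K\) and \(m\) must also depend only on \(\beta\), \(n\) and the constants in Lemma \ref{fractauxiliary}, and not on \(x_0\) or \(r\).
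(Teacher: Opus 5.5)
Your proof is correct and follows the paper's strategy. Lemma~\ref{fractauxiliary}\textup{(ii)} gives a uniform lower bound $\rho\geq c_\beta\rho(x_0)$ on $B_0$; you then cover $B_0$ by $O_\beta(1)$ balls whose radius is a fixed fraction of $\rho(x_0)$ and which are admissible for $D_\rho$, chain local doubling comparisons down to a ball contained in $B_1$, and sum. The only difference is how the chains are built: you run them along straight segments, using convexity of $B_0$ and the non-concentric form of the $D_\rho$ condition, so each chain has $O(r/\delta)$ steps, whereas the paper uses the connectedness of the overlap graph of a maximal separated net, with up to $M-1$ steps. This gives you a slightly better $\beta$-dependence of the constant, but the argument is otherwise the same.
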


\begin{proof}
Since $D_{\beta\rho}\subseteq D_\rho$ follows directly from the
definitions, it suffices to prove the reverse inclusion. Let
$w\in D_\rho$, and let $D\geq 1$ be a constant in its defining
doubling inequality. Consider balls
\[
B_0=B(x_0,r),\qquad B_1=B(x_1,r/2),\qquad B_1\subseteq B_0,
\]
with $r\leq\beta\rho(x_0)$. If $r\leq\rho(x_0)$, the required
estimate follows from $w\in D_\rho$. We may therefore assume that
\[
\rho(x_0)<r\leq\beta\rho(x_0).
\]

By Lemma~\ref{fractauxiliary}\textup{(ii)}, for every $y\in B_0$,
\[
\rho(y)
\geq C_0^{-1}\rho(x_0)
\left(1+\frac{|y-x_0|}{\rho(x_0)}\right)^{-d_0}
\geq C_0^{-1}\rho(x_0)(1+\beta)^{-d_0}
=:\rho_*.
\]
Set $a=\rho_*/4$. Then $a\leq r/4$, and
$4a\leq\rho(y)$ for every $y\in B_0$.

Choose a maximal $a$-separated set
$\{y_j\}_{j=1}^{M}\subset B_0$ containing $y_1=x_1$, and put
$P_j=B(y_j,a)$. Maximality gives
\[
B_0\subseteq\bigcup_{j=1}^{M}P_j,
\]
while the balls $B(y_j,a/2)$ are pairwise disjoint. The existence of such a sequence of balls is given  by \cite[Theorem 1.16]{MR1800917}. Comparing
volumes, we obtain
\[
M\leq\left(1+\frac{2r}{a}\right)^n
\leq\bigl(1+8C_0\beta(1+\beta)^{d_0}\bigr)^n
=:M_\beta.
\]
Moreover, $P_1\subseteq B_1$.

If $P_j\cap P_k\neq\varnothing$, then $P_j\subseteq4P_k$.
Since $4a\leq\rho(y_k)$, two applications of the local doubling
inequality yield
\[
w(P_j)\leq w(4P_k)\leq D^2w(P_k).
\]

The sets $P_j\cap B_0$ form a finite open cover of the connected
ball $B_0$, thus any
two balls in the cover can be joined by a chain of at most $M-1$
overlapping balls. Iterating the preceding estimate along such a chain gives
\[
w(P_j)\leq D^{2(M-1)}w(P_k),
\qquad 1\leq j,k\leq M.
\]
Consequently, for every $k\in\{1,\ldots,M\}$,
\begin{equation}\label{betarho_mass_comparison}
w(B_0)\leq\sum_{j=1}^{M}w(P_j)
\leq M D^{2(M-1)}w(P_k)
\leq C_\beta w(P_k),
\end{equation}
where, for example, $C_\beta=M_\beta D^{2M_\beta}$ is independent
of $x_0$, $x_1$, and $r$. Taking $k=1$ and using $P_1\subseteq B_1$,
we conclude that
\[
w(B_0)\leq C_\beta w(B_1).
\]
Hence $w\in D_{\beta\rho}$, as required.
\end{proof}

\begin{lemma}\label{beta}
For every $1\leq p<\infty$ and $\beta>1$,
\[
A_p^{\rho,\mathrm{loc}}=A_p^{\beta\rho,\mathrm{loc}}.
\]
The constants in the defining inequalities may depend on $\beta$.
\end{lemma}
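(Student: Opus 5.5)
The inclusion $A_p^{\beta\rho,\mathrm{loc}}\subseteq A_p^{\rho,\mathrm{loc}}$ is immediate for $\beta>1$: every ball $B(x,r)$ with $r\leq\rho(x)$ also satisfies $r\leq\beta\rho(x)$. The work is in the reverse inclusion. Fix $w\in A_p^{\rho,\mathrm{loc}}$ and a ball $B_0=B(x_0,r)$ with $\rho(x_0)<r\leq\beta\rho(x_0)$. We must bound the $A_p$ quantity of $B_0$ by $C_\beta|B_0|$.

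We reuse the covering from the proof of Lemma~\ref{betarho}. By Lemma~\ref{fractauxiliary}, every $y\in B_0$ has $\rho(y)\geq\rho_*=C_0^{-1}\rho(x_0)(1+\beta)^{-d_0}$. Put $a=\rho_*/4$ and take a maximal $a$-separated set $\{y_j\}_{j=1}^M\subset B_0$, with $M\leq M_\beta$. Set $P_j=B(y_j,a)$. These balls cover $B_0$, and $4P_j$ is a legitimate local ball because $4a\leq\rho(y_j)$. Since $A_p^{\rho,\mathrm{loc}}\subseteq D_\rho$, the chain argument behind \eqref{betarho_mass_comparison} gives
\[
w(B_0)\leq C_\beta\, w(P_k)\qquad\text{for every }k.
\]

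The weight $\sigma=w^{-1/(p-1)}$ (for $p>1$) is treated the same way. The local $A_p$ condition is symmetric under $w\leftrightarrow\sigma$, $p\leftrightarrow p'$, so $\sigma\in A_{p'}^{\rho,\mathrm{loc}}\subseteq D_\rho$. The chain argument therefore also gives $\sigma(B_0)\leq C_\beta\,\sigma(P_k)$.

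Now fix $k=1$ and apply the local condition on $P_1$:
\[
w(B_0)^{1/p}\sigma(B_0)^{1/p'}
\leq C_\beta\, w(P_1)^{1/p}\sigma(P_1)^{1/p'}
\leq C_\beta C\,|P_1|
\leq C_\beta C\,|B_0|.
\]
This is the required $A_p$ inequality for $B_0$.

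For $p=1$, the same argument applies to $w(B_0)$. For the supremum, write
\[
\operatorname*{ess\,sup}_{B_0}w^{-1}=\max_j\operatorname*{ess\,sup}_{P_j}w^{-1}.
\]
On each $P_j$, the local $A_1$ condition gives $\operatorname*{ess\,sup}_{P_j}w^{-1}\leq C|P_j|/w(P_j)$. Then
\[
w(B_0)\operatorname*{ess\,sup}_{P_j}w^{-1}\leq C_\beta C\,|P_j|\leq C_\beta C\,|B_0|,
\]
using $w(B_0)\leq C_\beta w(P_j)$ for that same $j$.

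The main obstacle is making sure the chain comparison $w(P_j)\lesssim w(P_k)$ really only uses doubling on admissible balls. The points to check are that $P_j\subseteq 4P_k$ whenever $P_j\cap P_k\neq\varnothing$, that $4P_k$ and $2P_k$ have radii at most $\rho(y_k)$, and that the half-balls in the definition of $D_\rho$ can be taken centered at $y_k$. All of these hold by the choice $4a\leq\rho(y)$ for $y\in B_0$. The remaining subtlety is confirming that $\sigma$ inherits local doubling, which follows from the duality symmetry noted above.
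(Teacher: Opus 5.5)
Your proof is correct and essentially follows the paper's argument. The $p=1$ case is identical, and for $p>1$ you apply \eqref{betarho_mass_comparison} to both $w$ and $\sigma$ and test the local condition on $P_1$, whereas the paper invokes Lemma~\ref{betarho} and shrinks $B$ concentrically to $2^{-N}B$; this is only a cosmetic difference. One small point: since the $P_j$ may extend outside $B_0$, you only have $\operatorname*{ess\,sup}_{B_0}w^{-1}\leq\max_j\operatorname*{ess\,sup}_{P_j}w^{-1}$ rather than equality, but that is the direction you actually use.
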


\begin{proof}
The inclusion
$A_p^{\beta\rho,\mathrm{loc}}\subseteq A_p^{\rho,\mathrm{loc}}$
is immediate. We prove the converse.

First suppose that $1<p<\infty$ and
$w\in A_p^{\rho,\mathrm{loc}}$. Set $v=w^{-1/(p-1)}$, then the local Muckenhoupt condition gives
$v\in A_{p'}^{\rho,\mathrm{loc}}$. Consequently, both $w$ and
$v$ belong to $D_\rho$, and Lemma~\ref{betarho} implies that
they belong to $D_{\beta\rho}$. Denote their doubling constants
in $D_{\beta\rho}$ by $D_w$ and $D_v$, respectively.

Let $B=B(x,r)$ with $r\leq\beta\rho(x)$. Set
$N=\lceil\log_2\beta\rceil$, so that $2^N\geq\beta$, and write
$B_N=2^{-N}B$.
Since the radius of $B_N$ is at most $\rho(x)$, the local
$A_p$ condition applies on $B_N$. Thus
\begin{align*}
w(B)^{1/p}v(B)^{1/p'}
&\leq D_w^{1/p}D_v^{1/p'}
       w(B_N)^{1/p}v(B_N)^{1/p'} \\
&\leq A D_w^{1/p}D_v^{1/p'}|B_N| \\
&\leq C_\beta|B|,
\end{align*}
where $A$ is a constant in the local $A_p$ condition and
$C_\beta$ is independent of $x$ and $r$. This proves
$w\in A_p^{\beta\rho,\mathrm{loc}}$.

Now suppose that $p=1$ and $w\in A_1^{\rho,\mathrm{loc}}$,
with local $A_1$ constant $A$. Since $w\in D_\rho$, we may
use the covering argument in the proof of Lemma~\ref{betarho}.
Let $B=B(x,r)$ with $r\leq\beta\rho(x)$. The case
$r\leq\rho(x)$ follows from the hypothesis, so assume that
$\rho(x)<r\leq\beta\rho(x)$.

Apply the preceding covering construction with $B_0=B$ and
$B_1=B(x,r/2)$. It gives a finite cover of $B$ by balls
$P_j=B(y_j,a)$ satisfying $a\leq\rho(y_j)$ and $a\leq r/4$.
By \eqref{betarho_mass_comparison},
\[
w(B)\leq C_\beta w(P_j)
\qquad\text{for every }j.
\]
The local $A_1$ condition on each $P_j$ implies
\[
w(P_j)\leq A|P_j|w(z)
\qquad\text{for almost every }z\in P_j.
\]
Since $|P_j|\leq|B|$, it follows that
\[
w(B)\leq C_\beta A|B|w(z)
\qquad\text{for almost every }z\in P_j\cap B.
\]
The cover is finite, so this inequality holds for almost every
$z\in B$. Equivalently,
\[
w(B)\,\operatorname*{ess\,sup}_{z\in B}w(z)^{-1}
\leq C_\beta A|B|.
\]
Therefore $w\in A_1^{\beta\rho,\mathrm{loc}}$, completing the proof.
\end{proof}
We next describe a localization principle that transfers uniform
weighted estimates for an operator to estimates for its localized
pieces. The argument combines a covering with bounded overlap and
extensions of the weight from the enlarged covering balls.

\begin{definition}
Let $\{Q_j\}_{j\geq1}$ be a covering of $\mathbb{R}^n$ by balls.
Fix $\kappa\geq1$ and set $\widetilde{Q}_j=\kappa Q_j$. Assume that
the enlarged balls have bounded overlap, so that
\[
\sum_j\chi_{\widetilde{Q}_j}(x)\leq N
\qquad\text{for almost every }x\in\mathbb{R}^n,
\]
where $N$ is independent of $x$. Such a covering is provided by
Lemma~\ref{dyadicdec} for any fixed dilation factor $\kappa$.
For an operator $S$, define the associated covering localization by
\begin{equation}\label{coveringoperator}
S_0f(x)
=\sum_j\chi_{Q_j}(x)
\left|S\!\left(f\chi_{\widetilde{Q}_j}\right)(x)\right|,
\end{equation}
whenever the terms on the right-hand side are defined.
\end{definition}

\begin{lemma}\label{covering}
Let $1\leq p\leq\nu<\infty$ and let $w$ be a weight on
$\mathbb{R}^n$. Suppose that, for each $j$, the restriction
$w|_{\widetilde{Q}_j}$ admits an extension $w_j$ to
$\mathbb{R}^n$ such that
\begin{equation}\label{uniformoperatorbound}
\|Sg\|_{L^\nu(w_j^{\nu/p})}
\leq A\|g\|_{L^p(w_j)},
\end{equation}
with a constant $A$ independent of $j$. Then
\[
\|S_0f\|_{L^\nu(w^{\nu/p})}
\leq C\|f\|_{L^p(w)},
\]
where $C$ depends only on $A$, $N$, $p$, and $\nu$.

For $p=1$, if \eqref{uniformoperatorbound} is replaced by the
uniform weak-type estimate
\[
\|Sg\|_{L^{\nu,\infty}(w_j^\nu)}
\leq A\|g\|_{L^1(w_j)},
\]
then
\[
\|S_0f\|_{L^{\nu,\infty}(w^\nu)}
\leq C\|f\|_{L^1(w)}.
\]
Here $L^{\nu,\infty}(v)$ denotes the weak Lebesgue space with
respect to the measure $v(x)\,dx$.
\end{lemma}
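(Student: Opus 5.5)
Since $\{Q_j\}$ covers $\mathbb{R}^n$ and the enlarged balls $\widetilde Q_j$ have overlap at most $N$ (so in particular the $Q_j\subseteq\widetilde Q_j$ do too), the plan is to control $S_0f$ piece by piece, apply the uniform bound \eqref{uniformoperatorbound} with the extended weight $w_j$, and then resum using bounded overlap. The observation that makes the extensions harmless is that only the values of $w_j$ on $\widetilde Q_j$ ever matter: on the output side we multiply by $\chi_{Q_j}\le\chi_{\widetilde Q_j}$, and on the input side $g=f\chi_{\widetilde Q_j}$ is supported in $\widetilde Q_j$, where $w_j=w$.

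\emph{Strong type.} For a.e.\ $x$ at most $N$ terms of \eqref{coveringoperator} are nonzero, so $(\sum_j a_j)^\nu\le N^{\nu-1}\sum_j a_j^\nu$ for nonnegative $a_j$ gives
\[
|S_0f(x)|^\nu\le N^{\nu-1}\sum_j\chi_{Q_j}(x)\bigl|S(f\chi_{\widetilde Q_j})(x)\bigr|^\nu .
\]
We integrate against $w^{\nu/p}$ and replace $w$ by $w_j$ on $Q_j\subseteq\widetilde Q_j$. Dropping the indicator and applying \eqref{uniformoperatorbound} to $g=f\chi_{\widetilde Q_j}$ bounds the $j$-th term by $A^\nu\bigl(\int_{\widetilde Q_j}|f|^p w\bigr)^{\nu/p}$. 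Since $\nu/p\ge1$, we use $\sum_j b_j^{\nu/p}\le(\sum_j b_j)^{\nu/p}$, and bounded overlap gives $\sum_j\int_{\widetilde Q_j}|f|^pw\le N\|f\|_{L^p(w)}^p$. Hence $\|S_0f\|_{L^\nu(w^{\nu/p})}\le N^{(\nu-1)/\nu}N^{1/p}A\|f\|_{L^p(w)}$.

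\emph{Weak type ($p=1$).} Fix $\lambda>0$. At a.e.\ point at most $N$ summands are nonzero, so if $S_0f(x)>\lambda$ then some $j$ has $x\in Q_j$ and $|S(f\chi_{\widetilde Q_j})(x)|>\lambda/N$. Thus
\[
\{S_0f>\lambda\}\subseteq\bigcup_j\bigl(Q_j\cap\{|S(f\chi_{\widetilde Q_j})|>\lambda/N\}\bigr).
\]
On $Q_j$ we have $w^\nu=w_j^\nu$, so the weak-type hypothesis bounds the $w^\nu$-measure of the $j$-th set by $(NA/\lambda)^\nu\|f\chi_{\widetilde Q_j}\|_{L^1(w)}^\nu$. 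Summing over $j$ and using $\nu\ge1$ together with bounded overlap gives
\[
\sum_j\|f\chi_{\widetilde Q_j}\|_{L^1(w)}^\nu\le\Bigl(\sum_j\|f\chi_{\widetilde Q_j}\|_{L^1(w)}\Bigr)^\nu\le N^\nu\|f\|_{L^1(w)}^\nu .
\]
Therefore $\lambda^\nu w^\nu(\{S_0f>\lambda\})\le(N^2A)^\nu\|f\|_{L^1(w)}^\nu$.

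There is no real obstacle. The points needing care are: checking that $w_j$ enters only through its values on $\widetilde Q_j$, which holds both for the output (via $\chi_{Q_j}$) and for the input (via the support of $f\chi_{\widetilde Q_j}$); and noting that the hypothesis $\nu\ge p$ is exactly what allows the $\ell^{\nu/p}$-sum to be bounded by the $\ell^1$-sum in both the strong- and weak-type cases.
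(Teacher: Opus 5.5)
Your proposal is correct and follows the paper's proof essentially step for step: the power-mean inequality over the at most $N$ nonzero terms, the replacement of $w$ by $w_j$ on $Q_j\subseteq\widetilde Q_j$, the uniform bound applied to $f\chi_{\widetilde Q_j}$, and then $\nu/p\geq1$ together with bounded overlap, with the weak-type case handled by the same level-set inclusion at threshold $\lambda/N$. The constants also agree: your strong-type constant $AN^{(\nu-1)/\nu+1/p}$ is the $\nu$-th root of the paper's $A^\nu N^{\nu-1+\nu/p}$, and your weak-type factor $(N^2A)^\nu$ is the same as the paper's.
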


\begin{proof}
Since $Q_j\subseteq\widetilde{Q}_j$, at most $N$ terms in
\eqref{coveringoperator} are nonzero at almost every point.
Using $w_j=w$ almost everywhere on $\widetilde{Q}_j$ and
\eqref{uniformoperatorbound}, we obtain
\begin{align*}
\|S_0f\|_{L^\nu(w^{\nu/p})}^{\nu}
&\leq N^{\nu-1}
\int_{\mathbb{R}^n} \sum_j \chi_{{Q}_j}
\left|S\!\left(f\chi_{\widetilde{Q}_j}\right)(x)\right|^\nu
w_j(x)^{\nu/p}\,dx\\
&\leq N^{\nu-1}\sum_j
\int_{Q_j}
\left|S\!\left(f\chi_{\widetilde{Q}_j}\right)(x)\right|^\nu
w_j(x)^{\nu/p}\,dx\\
&\leq A^\nu N^{\nu-1}\sum_j
\left(\int_{\mathbb{R}^n}|f\chi_{\widetilde{Q}_j}(x)|^p w_j(x)\,dx\right)^{\nu/p}\\
&\leq A^\nu N^{\nu-1}\sum_j
\left(\int_{\widetilde{Q}_j}|f(x)|^p w(x)\,dx\right)^{\nu/p}\\
&\leq A^\nu N^{\nu-1+\nu/p}\|f\|_{L^p(w)}^\nu.
\end{align*}
The third inequality uses $\nu/p\geq1$, and the last follows
from the bounded overlap of the enlarged balls.

For the weak-type assertion, let $\lambda>0$ and set
$E_\lambda=\{x:S_0f(x)>\lambda\}$. The overlap bound gives
\[
E_\lambda\subseteq
\bigcup_j\left\{x\in Q_j:
\left|S\!\left(f\chi_{\widetilde{Q}_j}\right)(x)\right|
>\frac{\lambda}{N}\right\}
\]
up to a null set. Applying the assumed weak-type estimate to
each term and using $\nu\geq1$, we find that
\begin{align*}
\int_{E_\lambda}w(x)^\nu\,dx
&\leq\left(\frac{AN}{\lambda}\right)^\nu
\sum_j\left(\int_{\widetilde{Q}_j}|f(x)|w_j(x)\,dx\right)^\nu\\
&\leq\left(\frac{AN}{\lambda}\right)^\nu
\left(\sum_j\int_{\widetilde{Q}_j}|f(x)|w(x)\,dx\right)^\nu\\
&\leq\left(\frac{AN^2}{\lambda}\right)^\nu
\|f\|_{L^1(w)}^\nu.
\end{align*}
This proves the required weak-type bound.
\end{proof}
To state the weight-extension result, we first define the Muckenhoupt classes on a fixed ball.
\begin{definition}
Let $B_0\subset\mathbb{R}^n$ be a ball and let $w$ be a
locally integrable function on $B_0$ that is positive almost
everywhere. For $1<p<\infty$, we say that $w\in A_p(B_0)$ if
\eqref{Ap1} holds for every ball $B\subseteq B_0$, with a
constant independent of $B$. Similarly, $w\in A_1(B_0)$ if
\[
w(B)\,\operatorname*{ess\,sup}_{x\in B}w(x)^{-1}
\leq C|B|
\]
for every ball $B\subseteq B_0$, with a constant independent
of $B$.
\end{definition}

\begin{lemma}\label{extension}
Given a ball \(B_{0}\) and a weight
\(w\in A_{p}(B_{0})\), \(1\leq p<\infty\), there exists an extension
\(w_{0}\in A_{p}(\mathbb{R}^{n})\) with the same constant.
\end{lemma}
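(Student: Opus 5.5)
The plan is to build $w_0$ by a reflection-and-periodization scheme. The quantitative claim I will aim for is that the $A_p(\mathbb{R}^n)$ constant of $w_0$ is controlled by the $A_p(B_0)$ constant of $w$ and by $n,p$ alone, and is independent of $B_0$. This is the sense in which I read ``the same constant'': a uniform constant, which is exactly what Lemma~\ref{covering} needs. I should flag that this is a reading of the statement. I do not expect the construction below to preserve the numerical constant literally, since reflection and bi-Lipschitz changes of variables introduce factors depending on $n$ and $p$.

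\emph{Normalization.} Translations and dilations preserve the $A_p$ and $A_1$ conditions with identical constants, because both sides of \eqref{Ap1} and \eqref{Ap2} scale by the same power of the radius. So I may assume $B_0=B(0,1)$. I would then transfer the problem to the cube $Q_0=[-1,1]^n$ by a bi-Lipschitz homeomorphism $\Phi:\overline{B_0}\to Q_0$, for instance the radial map sending each sphere of radius $t\in[0,1]$ onto $\partial(tQ_0)$, and set $\tilde w=w\circ\Phi^{-1}$ on $Q_0$. The Jacobian of $\Phi$ is bounded above and below, and $\Phi$ maps balls into balls of comparable radius, up to dimensional factors. I expect $\tilde w\in A_p(Q_0)$ with constant depending only on $n,p$ and $[w]_{A_p(B_0)}$.

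\emph{Extension to $\mathbb{R}^n$.} Next I extend $\tilde w$ from $Q_0$ by even reflection across the faces of $Q_0$, then across the faces of the reflected cubes, and so on. This gives a $4$-periodic weight $W$ on $\mathbb{R}^n$ whose restriction to each lattice cube is an isometric copy of $\tilde w$. I will check the $A_p$ condition for $W$ on a ball $B$ in two cases.
\begin{enumerate}
\item[(a)] If $r(B)\le1$, then $B$ meets at most $2^n$ lattice cubes. Folding the reflections maps $B$ into $Q_0$, covering a ball of comparable size at most $2^n$ times. Hence $W(B)$ and $W^{-1/(p-1)}(B)$ are each bounded by $2^n$ times the corresponding integrals of $\tilde w$ and $\tilde w^{-1/(p-1)}$ over a ball of radius $\lesssim r$ inside $Q_0$. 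The local condition then gives the required bound.
\item[(b)] If $r(B)>1$, then $B$ is essentially a union of whole periods. Both averages are comparable to the averages of $\tilde w$ and $\tilde w^{-1/(p-1)}$ over $Q_0$. Their product is controlled by the $A_p$ condition on $Q_0$ itself, after covering $Q_0$ by a ball inside $B_0$ of comparable size; the doubling of $w$ gives this comparison.
\end{enumerate}
The $A_1$ case is identical, with $\operatorname*{ess\,sup}W^{-1}$ in place of the dual average; this works because reflection preserves essential infima.

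Finally I return to balls. I extend $\Phi$ to a global bi-Lipschitz map of $\mathbb{R}^n$ that is radial of the same form outside $B_0$, set $w_0=W\circ\Phi$, and apply the bi-Lipschitz stability of $A_p$ once more. Then $w_0=w$ on $B_0$ and $w_0\in A_p(\mathbb{R}^n)$.

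\emph{Main obstacle.} I expect the main obstacle to be the straddling balls in case (a), namely balls meeting $Q_0$ and several of its reflections. The folding must map each such ball into a single ball inside $Q_0$ of comparable radius, so that the $A_p(Q_0)$ hypothesis applies rather than only a condition on sub-balls of each piece. I would handle this by replacing $B$ with its circumscribed cube. The folding map is $1$-Lipschitz and maps a cube of side $\le2$ onto a sub-rectangle of $Q_0$. That sub-rectangle is contained in a ball $B'\subseteq B_0$ after the bi-Lipschitz change of variables, with $|B'|\lesssim|B|$. If $B'$ would protrude from $B_0$, I would use the doubling of $w$ and of $w^{-1/(p-1)}$, which follows from $A_p(B_0)$ by H\"older's inequality, to shrink it inside while losing only a dimensional constant.
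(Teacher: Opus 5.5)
\textbf{Where the proposal breaks.} The gap is at the step you flagged as the main obstacle. It cannot be closed under the paper's definition of $A_p(B_0)$, which tests only balls $B\subseteq B_0$. The paper states the lemma without proof, so it offers no way around this either.

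Your reflection and periodization step is fine. It is the standard proof of the \emph{cube} version, where $A_p(Q_0)$ is tested on subcubes: folding a cube of side at most $\ell(Q_0)$ lands in a subcube of the same side, with multiplicity at most $2^n$. Bi-Lipschitz changes of variables are also harmless for the global class.

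What fails is the passage from balls inside $B_0$ to sets that reach $\partial B_0$. Your claim $\tilde w\in A_p(Q_0)$ requires bounding $w(B'\cap B_0)$ and $\sigma(B'\cap B_0)$, with $\sigma=w^{-1/(p-1)}$, by the masses of an inscribed ball of comparable radius. So does your fix of enclosing $\Phi^{-1}(R)$ in a ball $B'$ and shrinking $B'$ into $B_0$ by doubling. But doubling derived from $A_p(B_0)$ only compares balls that both lie in $B_0$, and such balls barely see the boundary layer. For $t\leq\rho\leq 1/2$, a ball of radius $\rho$ inside $B(0,1)$ meets $\{1-|x|<t\}$ in measure $\lesssim t(\rho t)^{(n-1)/2}$. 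A half-ball of radius $\rho$ centred on the sphere meets it in measure $\approx t\rho^{n-1}$.

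\textbf{The loss is genuine.} Set up the example as follows:
\begin{itemize}
\item $B_0=B(0,1)$, $d(x)=1-|x|$, and fix $a\in\bigl(-\tfrac{n+1}{2},-1\bigr)$.
\item For small $r$, let $P_r=\partial B_0\cap B(e_1,r)$ and $\epsilon=r^{(n-1)/|a+1|}$. Then $\epsilon<r^2$ and $r^{n-1}\epsilon^{a+1}=1$.
\item Let $\delta_r=\operatorname{dist}(\cdot,P_r)+\epsilon\geq d$ and $w_r=\delta_r^{\,a}$.
\end{itemize}

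First, $[w_r]_{A_p(B_0)}\leq C(n,p,a)$ uniformly in $r$. Note that $(\sigma_r(B)/|B|)^{p-1}\leq(\sup_B\delta_r)^{|a|}$. Take an inscribed ball $B$ of radius $\rho$.
\begin{itemize}
\item If $\operatorname{dist}(B,P_r)+\epsilon\geq\rho$, then $\delta_r$ varies by a factor at most $3$ on $B$, and the bound is immediate.
\item Otherwise $\sup_B\delta_r\leq 3\rho$. For $\rho\leq1/2$, the layer-cake formula and the measure bound above give $|B|^{-1}\int_B w_r\leq |B|^{-1}\int_B d^a\lesssim\rho^{a}$, because $a>-\tfrac{n+1}{2}$. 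For $\rho>1/2$, use $w_r(B_0)\lesssim 1+r^{n-1}\epsilon^{a+1}=2$.
\end{itemize}

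Second, every extension has a large constant. On $B(e_1,r/2)\cap B_0$ one has $\operatorname{dist}(x,P_r)=d(x)$. So any extension $w_0$ satisfies, with $B=B(e_1,r/2)$,
\[
\frac{w_0(B)}{|B|}\left(\frac{\sigma_0(B)}{|B|}\right)^{p-1}
\gtrsim\frac{\epsilon^{a+1}r^{n-1}}{r^{n}}\,r^{|a|}
=\Bigl(\frac{r}{\epsilon}\Bigr)^{|a|-1}\longrightarrow\infty
\qquad(r\to0).
\]
The same computation works for $p=1$.

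Hence no construction yields extensions whose constants are controlled by $[w]_{A_p(B_0)}$. That uniformity in $j$ is exactly what Lemma~\ref{covering} needs.

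\textbf{What survives.} Your reflection argument, run directly on a cube with subcube testing, proves the cube version. That version suffices for Theorem~\ref{localisation}. By Lemma~\ref{beta} and the comparability of $\rho$ in Lemma~\ref{fractauxiliary}, $w$ satisfies the local condition on all balls of radius up to a fixed multiple of $\rho$. It therefore satisfies it on all subcubes of a cube of side comparable to $\rho(x_j)$ containing $\widetilde Q_j$. Reflecting from that cube gives the uniform extensions.
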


Finally, we introduce a localization defined directly by the
critical radius. It restricts the input of the operator to
the ball centred at the evaluation point with radius $\rho(x)$.
For an operator $S$, its $\rho$-localization is defined by
\begin{equation}\label{localoperator}
S_{\mathrm{loc}}f(x)
=S\!\left(f\chi_{B(x,\rho(x))}\right)(x),
\end{equation}
whenever the expression on the right-hand side is well defined.
\section{Semigroup generation for \L}\label{semigroup}

We establish the semigroup framework for the fractional Schr\"odinger
operator $L=(-\Delta)^s+V$. The reverse H\"older assumption ensures
that $V$ belongs to the local Kato class associated with the fractional
Laplacian. Together with the nonnegativity of $V$, this gives a
Feynman--Kac representation and domination by the free fractional
semigroup. These properties justify the semigroup representations
and provide the initial kernel bounds used in the following sections.

Throughout this section, let $n\geq3$, $0<s<1$, and let $V\geq0$
belong to $RH_q$ for some $q>n/(2s)$. Write
\[
K_0=(-\Delta)^s,
\qquad
P_t=e^{-tK_0},
\]
and denote the convolution kernel of $P_t$ by $p_t$.
To verify the local Kato condition, fix a compact set
$B\subset\mathbb{R}^n$ and put $W=\chi_BV$. We must show that
\begin{equation}\label{local_kato_condition}
\lim_{t\downarrow0}
\left\|\int_0^t P_uW\,du\right\|_{L^\infty(\mathbb{R}^n)}=0.
\end{equation}

Since $V\in L^q_{\mathrm{loc}}(\mathbb{R}^n)$, we have
$W\in L^q(\mathbb{R}^n)$. Let $q'$ be the conjugate exponent
of $q$. The scaling identity
\[
p_u(x)=u^{-n/(2s)}p_1\!\left(u^{-1/(2s)}x\right)
\]
and the decay estimate \eqref{fractkernel} imply that
$p_1\in L^{q'}(\mathbb{R}^n)$ and
\[
\|p_u\|_{L^{q'}(\mathbb{R}^n)}
=u^{-n/(2sq)}\|p_1\|_{L^{q'}(\mathbb{R}^n)}.
\]
Consequently, H\"older's inequality gives
\[
\|P_uW\|_{L^\infty}
\leq \|p_u\|_{L^{q'}}\|W\|_{L^q}
\leq C u^{-n/(2sq)}\|V\|_{L^q(B)}.
\]
Integrating in $u$, we obtain
\begin{align}
\left\|\int_0^t P_uW\,du\right\|_{L^\infty}
&\leq C\|V\|_{L^q(B)}\int_0^t u^{-n/(2sq)}\,du \notag\\
&=\frac{C}{1-n/(2sq)}\,
\|V\|_{L^q(B)}\,t^{1-n/(2sq)}.
\label{local_kato_estimate}
\end{align}
Because $q>n/(2s)$, the exponent $1-n/(2sq)$ is positive,
and \eqref{local_kato_condition} follows. Thus
\[
V\in K_{\mathrm{loc}}\!\left(\mathbb{R}^n,(-\Delta)^s\right).
\]
For $q=\infty$, the same conclusion follows from
$\|P_uW\|_{L^\infty}\leq\|W\|_{L^\infty}$.
Since $V_-=0$, the potential is also Kato-decomposable.
Notice that this argument uses only local $L^q$ integrability
with $q>n/(2s)$; the full reverse H\"older condition will enter
the subsequent estimates involving the critical radius.

We realize $L$ as the nonnegative self-adjoint operator on
$L^2(\mathbb{R}^n)$ associated with the densely defined closed form
\[
\mathfrak{l}[f,g]
=\int_{\mathbb{R}^n}
(-\Delta)^{s/2}f(x)\,
\overline{(-\Delta)^{s/2}g(x)}\,dx
+\int_{\mathbb{R}^n}V(x)f(x)\overline{g(x)}\,dx,
\]
with domain
\[
\mathcal{D}(\mathfrak{l})
=H^s(\mathbb{R}^n)\cap L^2(\mathbb{R}^n,V(x)\,dx).
\]
In particular, $-L$ generates the strongly continuous contraction
semigroup $T_t=e^{-tL}$ on $L^2(\mathbb{R}^n)$.

Let $(X_t)_{t\geq0}$ be the rotationally symmetric $2s$-stable
L\'evy process with generator $-(-\Delta)^s$, and let
$\mathbb{E}_x$ denote expectation for this process started at $x$.
The Feynman--Kac representation for Kato-decomposable potentials
gives
\begin{equation}\label{fractional_feynman_kac}
T_tf(x)
=\mathbb{E}_x\!\left[
\exp\!\left(-\int_0^t V(X_u)\,du\right)f(X_t)
\right],
\end{equation}
initially for bounded measurable $f$ in $L^2(\mathbb{R}^n)$,
see \cite[Section~2.3]{kaleta2012fractional}.
Since $V\geq0$, the exponential factor belongs to $[0,1]$.
It follows that
\begin{equation}\label{semigroup_domination}
|T_tf(x)|
\leq\mathbb{E}_x\!\left[|f(X_t)|\right]
=P_t|f|(x)
\end{equation}
for almost every $x\in\mathbb{R}^n$.

The free semigroup is contractive on $L^p(\mathbb{R}^n)$ for
$1\leq p\leq\infty$. Hence \eqref{semigroup_domination} yields
consistent positivity-preserving extensions of $T_t$ satisfying
\begin{equation}\label{semigroup_contractivity}
\|T_tf\|_{L^p}
\leq\|P_t|f|\|_{L^p}
\leq\|f\|_{L^p},
\qquad 1\leq p\leq\infty.
\end{equation}

These extensions are strongly continuous for $1\leq p<\infty$.
Indeed, let $f\in C_c^\infty(\mathbb{R}^n)$ and choose a bounded
ball $D$ containing its support. By \eqref{semigroup_domination},
\[
\|T_tf-f\|_{L^1}
\leq |D|^{1/2}\|T_tf-f\|_{L^2}
+\|P_t|f|-|f|\|_{L^1}
\longrightarrow0
\qquad(t\downarrow0).
\]
Here we use strong continuity of $T_t$ on $L^2$ and of $P_t$
on $L^1$. Combining this limit with
$\|T_tf-f\|_{L^\infty}\leq2\|f\|_{L^\infty}$ and H\"{o}lder's inequality
gives convergence in every $L^p$, $1\leq p<\infty$, for such $f$.
Density and \eqref{semigroup_contractivity} then give
\[
\lim_{t\downarrow0}\|T_tf-f\|_{L^p}=0,
\qquad f\in L^p(\mathbb{R}^n),\quad 1\leq p<\infty.
\]
Thus $(T_t)_{t\geq0}$ is a $C_0$-contraction semigroup on these
spaces. On $L^\infty$, it is contractive, but strong continuity
in the norm topology is not asserted.

Finally, the Feynman--Kac semigroup admits a nonnegative symmetric
integral kernel $k_t^L(x,y)$, see
\cite[Section~2.3]{kaleta2012fractional}. Thus
\begin{equation}\label{semigroup_kernel_representation}
T_tf(x)=\int_{\mathbb{R}^n}k_t^L(x,y)f(y)\,dy,
\qquad t>0,
\end{equation}
and \eqref{semigroup_domination} implies
\begin{equation}\label{semigroup_kernel_domination}
0\leq k_t^L(x,y)
\leq p_t(x-y)
\leq\frac{Ct}{\left(t^{1/(2s)}+|x-y|\right)^{n+2s}}
\end{equation}
for almost every $x,y\in\mathbb{R}^n$, where $C$ depends only
on $n$ and $s$. This free-kernel bound is the starting point
for the potential-dependent estimates developed below. 

\section{Kernel estimates and local weighted bounds}\label{kernelmain}

We establish kernel estimates for $L=(-\Delta)^s+V$ and weighted
bounds for the localized free maximal and fractional integral
operators. Free-kernel domination provides the basic size estimates.
To retain information about the potential, we first prove a
critical-radius form inequality and then estimate the mass of the
semigroup. These arguments yield a global heat-kernel bound with a
fixed positive critical-radius exponent. The local weighted estimates
follow from a critical-ball covering and uniform extensions of the
appropriate weights.

Throughout this section, let $n\geq3$, $0<s<1$, and let $V\geq0$
be nontrivial and belong to $RH_q$ for some finite $q>n/(2s)$.
We write $T_t=e^{-tL}$ and $P_t=e^{-t(-\Delta)^s}$.
Constants may depend on $n,s,q$, the reverse H\"older constant,
and the critical-radius comparison and covering constants, but
are independent of $t,x,y$.

\subsection{Free-kernel domination and the Green kernel}

Let $k_t^L(x,y)$ denote the kernel of $T_t$. The Feynman--Kac
representation, or equivalently the Trotter product formula, gives
\begin{equation}\label{kernelest}
0\leq k_t^L(x,y)\leq p_t(x-y),
\qquad t>0,
\end{equation}
for almost every $x,y\in\mathbb{R}^n$. Hence
\begin{equation}\label{free_domination_size}
k_t^L(x,y)
\leq C t^{-n/(2s)}
\left(1+\frac{|x-y|}{t^{1/(2s)}}\right)^{-n-2s}.
\end{equation}

\begin{lemma}\label{fundamentalsoln}
The Green kernel
\[
\Gamma^L(x,y)=\int_0^\infty k_t^L(x,y)\,dt
\]
satisfies
\begin{equation}\label{green_kernel_basic_bound}
0\leq\Gamma^L(x,y)\leq C|x-y|^{2s-n}
\end{equation}
for almost every $x\neq y$.
\end{lemma}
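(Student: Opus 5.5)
The plan is to integrate the free-kernel domination \eqref{free_domination_size} in time. Nonnegativity of $\Gamma^L$ is immediate from $k_t^L\geq0$. For the upper bound, fix $x\neq y$, put $r=|x-y|$, and use \eqref{kernelest} together with \eqref{fractkernel} to write
\[
0\leq\Gamma^L(x,y)\leq\int_0^\infty p_t(x-y)\,dt
\leq C\int_0^\infty\frac{t}{\bigl(t^{1/(2s)}+r\bigr)^{n+2s}}\,dt .
\]
Measurability and the a.e. qualifier are inherited from the a.e. validity of \eqref{kernelest}; Tonelli's theorem justifies integrating the nonnegative kernels in $t$.

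Next I will split the time integral at the natural scale $t=r^{2s}$. For $t\leq r^{2s}$, the denominator is bounded below by $r^{n+2s}$, so this part is at most
\[
Cr^{-n-2s}\int_0^{r^{2s}}t\,dt=\tfrac{C}{2}\,r^{-n-2s}r^{4s}=Cr^{2s-n}.
\]
For $t>r^{2s}$, the denominator is bounded below by $t^{(n+2s)/(2s)}$, so the integrand is at most $t^{-n/(2s)}$. Since $n\geq3>2s$, the exponent $n/(2s)$ exceeds $1$, and this tail is $C\,(r^{2s})^{1-n/(2s)}=Cr^{2s-n}$. Adding the two pieces gives \eqref{green_kernel_basic_bound}, with $C$ depending only on $n$ and $s$.

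Alternatively, I could substitute $t=r^{2s}\tau$ to obtain $r^{2s-n}\int_0^\infty\tau(\tau^{1/(2s)}+1)^{-n-2s}\,d\tau$ and check that this integral converges: near $0$ the integrand is bounded, and near $\infty$ it behaves like $\tau^{-n/(2s)}$. One could also observe that $\int_0^\infty p_t\,dt$ is exactly the Riesz potential kernel $c_{n,s}|x|^{2s-n}$ of $(-\Delta)^{-s}$, which is finite because $n>2s$.

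There is no real obstacle here. The only point requiring care is the convergence of the large-time tail, which uses $n>2s$; this is guaranteed by $n\geq3$ and $0<s<1$.
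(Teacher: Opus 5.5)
Your proposal is correct and follows essentially the same route as the paper: dominate $k_t^L$ by the free kernel, split the time integral at $t=r^{2s}$, and use $n>2s$ to make the large-time tail converge. Your additional remarks (the scaling substitution $t=r^{2s}\tau$ and the identification of $\int_0^\infty p_t\,dt$ with the Riesz kernel of $(-\Delta)^{-s}$) are valid alternatives but not needed.
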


\begin{proof}
Set $r=|x-y|>0$. Since $n>2s$, free-kernel domination gives
\begin{align*}
\Gamma^L(x,y)
&\leq C\int_0^{r^{2s}}\frac{t}{r^{n+2s}}\,dt
 +C\int_{r^{2s}}^\infty t^{-n/(2s)}\,dt\\
&\leq Cr^{2s-n}.
\end{align*}
\end{proof}

\subsection{Critical-radius decay of the heat kernel}

By Lemma~\ref{fractauxiliary}, we may fix $C_0\geq1$ and
$d_0>0$ such that, with $a=d_0/(d_0+1)$,
\begin{equation}\label{rho_growth_for_kernel}
C_0^{-1}\rho(x)
\left(1+\frac{|y-x|}{\rho(x)}\right)^{-d_0}
\leq \rho(y)
\leq C_0\rho(x)
\left(1+\frac{|y-x|}{\rho(x)}\right)^a.
\end{equation}
Let
\[
\mathfrak l[u,u]
=\frac{c_{n,s}}2
\iint_{\mathbb R^n\times\mathbb R^n}
\frac{|u(z)-u(w)|^2}{|z-w|^{n+2s}}\,dz\,dw
+\int_{\mathbb R^n}V(z)|u(z)|^2\,dz.
\]
The quadratic form $\mathfrak l[u,u]$ associated with the operator $L$ is defined as above, where the first term represents the quadratic form of the fractional Laplacian, as given in \cite{MR3613319}.
\begin{lemma}[Critical-radius form inequality]
\label{critical_radius_form_bound}
For every $u\in\mathcal D(\mathfrak l)$,
\begin{equation}\label{critical_radius_form_inequality}
\int_{\mathbb R^n}\rho(z)^{-2s}|u(z)|^2\,dz
\leq C\mathfrak l[u,u].
\end{equation}
\end{lemma}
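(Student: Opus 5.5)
We plan to prove a localized Fefferman--Phong type inequality on critical balls and then sum over the covering of Lemma~\ref{dyadicdec}. Fix $x_j$ and $Q_j=B(x_j,\rho(x_j))$, and write $r_j=\rho(x_j)$. By Lemma~\ref{fractauxiliary}(i), $\rho(z)\sim r_j$ for $z\in \sigma Q_j$ for a fixed $\sigma\ge1$. It therefore suffices to show
\[
r_j^{-2s}\int_{Q_j}|u|^2\,dz\;\le\; C\left(\frac{c_{n,s}}2\iint_{\widetilde Q_j\times\widetilde Q_j}\frac{|u(z)-u(w)|^2}{|z-w|^{n+2s}}\,dz\,dw+\int_{\widetilde Q_j}V|u|^2\,dz\right),
\]
where $\widetilde Q_j=\sigma Q_j$ with $\sigma$ fixed (for instance $\sigma=2$). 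Summing over $j$ gives the result, since $\sum_j\chi_{Q_j}\ge1$ and the bounded overlap $\sum_j \chi_{\widetilde Q_j\times\widetilde Q_j}(z,w)\le\sum_j\chi_{\widetilde Q_j}(z)\le C\sigma^{N_1}$ from Lemma~\ref{dyadicdec}(ii) lets both local terms be controlled by $\mathfrak l[u,u]$.

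For the local inequality on a ball $B=B(x_0,r)$ with $r=\rho(x_0)$, set $\widetilde B=2B$ and let $u_{\widetilde B}$ be the average of $u$ over $\widetilde B$. First we use the defining property of $\rho$ together with doubling. By maximality in \eqref{auxilaryfunc} and the doubling of $V\,dz$,
\[
\int_{\widetilde B}V\ge\int_B V\gtrsim r^{n-2s},
\]
with the equality $\int_{B(x_0,\rho(x_0))}V=\rho(x_0)^{n-2s}$ holding by continuity in $r$. Next we write $|u_{\widetilde B}|^2\int_{\widetilde B}V\le 2\int_{\widetilde B}V|u|^2+2\int_{\widetilde B}V|u-u_{\widetilde B}|^2$. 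The last term is the main obstacle. We bound it by Hölder, using $V\in RH_q$:
\[
\int_{\widetilde B}V|u-u_{\widetilde B}|^2\le \|V\|_{L^q(\widetilde B)}\,\|u-u_{\widetilde B}\|_{L^{2q'}(\widetilde B)}^2\lesssim |\widetilde B|^{1/q}\,\frac{\int_{\widetilde B}V}{|\widetilde B|}\,\|u-u_{\widetilde B}\|_{L^{2q'}(\widetilde B)}^2 .
\]
Because $q>n/(2s)$, we have $2q'<2n/(n-2s)$, the fractional Sobolev exponent. The local fractional Sobolev--Poincaré inequality on balls (Gagliardo seminorm restricted to $\widetilde B\times\widetilde B$) then gives
\[
\|u-u_{\widetilde B}\|_{L^{2q'}(\widetilde B)}^2\lesssim r^{2s-n+n/q'}[u]^2_{H^s(\widetilde B)} .
\]
Combining these, $\int_{\widetilde B}V|u-u_{\widetilde B}|^2\lesssim r^{2s-n}\bigl(\int_{\widetilde B}V\bigr)[u]^2_{H^s(\widetilde B)}$. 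Since $\int_{\widetilde B}V\le c_1 r^{n-2s}$ by doubling and criticality, this is $\lesssim[u]^2_{H^s(\widetilde B)}$. Dividing by $\int_{\widetilde B}V\gtrsim r^{n-2s}$ then yields
\[
|u_{\widetilde B}|^2\lesssim r^{2s-n}\Bigl(\int_{\widetilde B}V|u|^2+[u]_{H^s(\widetilde B)}^2\Bigr).
\]

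To finish the local inequality, write $r^{-2s}\int_B|u|^2\le 2r^{-2s}\int_{\widetilde B}|u-u_{\widetilde B}|^2+2r^{-2s}|\widetilde B||u_{\widetilde B}|^2$. The first term is bounded by $C[u]^2_{H^s(\widetilde B)}$ via the fractional Poincaré inequality $\int_{\widetilde B}|u-u_{\widetilde B}|^2\lesssim r^{2s}[u]^2_{H^s(\widetilde B)}$. This follows directly from Jensen, since $|z-w|\le 4r$ on $\widetilde B$. The second term is bounded by the estimate on $|u_{\widetilde B}|^2$ above, because $r^{-2s}|\widetilde B|\,r^{2s-n}\sim1$.

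The delicate step is the Sobolev--Poincaré bound with the exponent $2q'$ and its scaling. If one prefers to avoid the sharp Sobolev embedding, one could use the self-improvement to $q>n/(2s)$ and any exponent between $2$ and $2n/(n-2s)$. The constants depend only on $n,s,q$, the $RH_q$ constant, and the comparison constants for $\rho$. Finally, a density argument extends the inequality from $C_c^\infty$ to all of $\mathcal D(\mathfrak l)$, or alternatively the argument is applied directly since it only uses $u\in H^s$.
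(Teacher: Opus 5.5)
Your proof is correct, and its overall architecture matches the paper's. Both prove a local estimate on critical balls using $\int_{B(x_0,\rho(x_0))}V=\rho(x_0)^{n-2s}$, obtain an $L^2$ fractional Poincar\'e inequality from Jensen (the variance identity), and sum over the covering of Lemma~\ref{dyadicdec} using bounded overlap and comparability of $\rho$.

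The real difference is in how you control the mean of $u$.

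\textbf{The paper's route.} The paper never estimates $\int_B V|u-u_B|^2$. It uses the reverse H\"older inequality only to show that the level set $E=\{z\in Q: V(z)\geq V_Q/2\}$ occupies a fixed proportion of $Q$. It then bounds $|u_Q|^2$ by averaging $2|u|^2+2|u-u_Q|^2$ over $E$. On $E$ one has $V\gtrsim r^{-2s}$, which converts $\int_E|u|^2$ into $r^{2s}\int_Q V|u|^2$. This needs nothing beyond the $L^2$ variance identity. It also works for $V\in RH_q$ with any $q>1$, which is essentially an $A_\infty$-type property.

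\textbf{Your route.} You prove the stronger intermediate weighted Poincar\'e inequality $\int_{2B}V|u-u_{2B}|^2\lesssim [u]^2_{H^s(2B)}$ on critical balls. You get it from H\"older, the $RH_q$ condition, and the fractional Sobolev--Poincar\'e inequality at exponent $2q'$. This is the classical Fefferman--Phong mechanism, and your scaling computation is right. However, it imports the Sobolev embedding for $W^{s,2}$ on balls. It also genuinely uses $q\geq n/(2s)$, since that is what makes $2q'\leq 2n/(n-2s)$.

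Under the standing hypothesis $q>n/(2s)$ both arguments are valid. The paper's is more elementary and less demanding on $V$; yours gives the sharper local weighted Poincar\'e bound as a by-product.

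Your closing remark about self-improvement is unnecessary. The hypothesis $q>n/(2s)$ already makes $2q'$ strictly subcritical, so the sharp endpoint embedding is never needed.
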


\begin{proof}
Let $Q=B(x,r)$, where $r=\rho(x)$, and set
$V_Q=|Q|^{-1}\int_Q V$. By the definition of $\rho$ and
continuity of the defining integral,
\[
V_Q=\frac1{|B(0,1)|r^{2s}}.
\]
Set
\[
E=\{z\in Q:V(z)\geq V_Q/2\}.
\]
The reverse H\"older inequality gives
\[
\frac12 V_Q|Q|
\leq\int_E V
\leq C_{\rm RH}V_Q|Q|^{1/q}|E|^{1/q'},
\]
and therefore $|E|\geq c|Q|$, where $c=\left(\frac{1}{2C_{RH}} \right)^{1/q'} $.

Writing $u_Q=|Q|^{-1}\int_Q u$, the variance identity yields
\begin{align*}
\int_Q|u-u_Q|^2
&=\frac1{2|Q|}
\iint_{Q\times Q}|u(z)-u(w)|^2\,dz\,dw\\
&\leq Cr^{2s}
\iint_{Q\times Q}
\frac{|u(z)-u(w)|^2}{|z-w|^{n+2s}}\,dz\,dw.
\end{align*}
Also,
\begin{align*}
|Q||u_Q|^2
&\leq\frac{2|Q|}{|E|}
\left(\int_E|u|^2+\int_E|u-u_Q|^2\right)\\
&\leq C\left(
r^{2s}\int_Q V|u|^2+\int_Q|u-u_Q|^2
\right).
\end{align*}
Consequently,
\[
r^{-2s}\int_Q|u|^2
\leq C\left(
\iint_{Q\times Q}
\frac{|u(z)-u(w)|^2}{|z-w|^{n+2s}}\,dz\,dw
+\int_Q V|u|^2
\right).
\]
Summing over the covering in Lemma~\ref{dyadicdec}, using
its bounded overlap and the comparability of $\rho$ on each
critical ball, proves \eqref{critical_radius_form_inequality}.
\end{proof}

\begin{lemma}[Decay of the semigroup mass]
\label{semigroup_mass_decay}
Let $\delta=1/(2d_0)$ and define
\[
h_t(x)=T_t1(x)
=\int_{\mathbb R^n}k_t^L(x,y)\,dy.
\]
Then
\begin{equation}\label{semigroup_mass_bound}
0\leq h_t(x)
\leq C\left(1+\frac{t}{\rho(x)^{2s}}\right)^{-\delta},
\qquad t>0,
\end{equation}
for almost every $x\in\mathbb R^n$.
\end{lemma}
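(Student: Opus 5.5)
The plan is to combine free-kernel domination (for the far region) with an $L^2$ energy decay argument driven by Lemma~\ref{critical_radius_form_bound} (for the near region), and then optimize over the localization radius. Write $\rho=\rho(x)$. Since $0\le h_t\le P_t1=1$ by \eqref{kernelest}, the bound \eqref{semigroup_mass_bound} is trivial for $t\le\rho^{2s}$. So we assume $\tau:=t/\rho^{2s}\ge1$ and aim for $h_t(x)\le C\tau^{-\delta}$.

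\emph{Splitting the mass.} Fix a radius $R=\rho\,\tau^{\gamma}\ge\rho$, with $\gamma>0$ to be chosen, and set $B=B(x,R)$. By \eqref{kernelest} and \eqref{fractkernel},
\[
h_t(x)\le T_t\chi_B(x)+\int_{|y-x|>R}p_t(x-y)\,dy\le T_t\chi_B(x)+C\,tR^{-2s}.
\]
This requires $2s\gamma>1$ for the tail term $tR^{-2s}=\tau^{1-2s\gamma}$ to decay. For the local part we use the semigroup property, symmetry of $k^L$ and Cauchy--Schwarz:
\[
T_t\chi_B(x)\le\|k^L_{t/2}(x,\cdot)\|_{L^2}\,\|T_{t/2}\chi_B\|_{L^2}\le Ct^{-n/(4s)}\|T_{t/2}\chi_B\|_{L^2}.
\]
Here the kernel norm bound again comes from free domination.

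\emph{Energy decay.} Let $E(u)=\|T_u\chi_B\|_2^2$. Then $E'(u)=-2\mathfrak l[T_u\chi_B,T_u\chi_B]$. By Lemma~\ref{critical_radius_form_bound},
\[
E'(u)\le -c\int\rho(z)^{-2s}|T_u\chi_B(z)|^2\,dz.
\]
Restrict this integral to $B^*=B(x,KR)$. There the right inequality in \eqref{rho_growth_for_kernel} gives
\[
\rho(z)^{-2s}\ge\mu:=c\,\rho^{-2s}(KR/\rho)^{-2sa}.
\]
The portion of $E(u)$ outside $B^*$ is controlled by $\|P_u\chi_B\|_{L^2(\mathbb R^n\setminus B^*)}^2$, which the polynomial tail of $p_u$ makes small. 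We then get
\[
E'(u)\le-c\mu\bigl(E(u)-\varepsilon(u)\bigr),
\]
and Gronwall yields
\[
E(t/2)\lesssim e^{-c\mu t}|B|+\sup_{u\le t}\varepsilon(u).
\]
The exponential term is harmless once $\mu t=\tau^{1-2sa\gamma}K^{-2sa}\gg\log\tau$, which requires $2sa\gamma<1$. Collecting terms, we would choose $\gamma$ (and $K$) to balance three quantities: the far tail $tR^{-2s}$, the error $t^{-n/(4s)}\varepsilon^{1/2}$, and $t^{-n/(4s)}|B|^{1/2}e^{-c\mu t/2}$. Using the left inequality of \eqref{rho_growth_for_kernel} wherever $\rho(z)$ must be bounded from above relative to $\rho(x)$, we expect this optimization to produce a power $\tau^{-\delta}$ with an exponent of the form $\delta=1/(2d_0)$, after shrinking if necessary. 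Since any smaller positive exponent also suffices for the statement, we need not track the sharp value.

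\emph{Main obstacle.} The delicate step is the error $\varepsilon(u)$, i.e. the $L^2$ mass of $T_u\chi_B$ escaping $B^*$. Because $p_u$ has only polynomial tails, this mass is of order $u R^{-2s}K^{-2s}|B|$-type quantities rather than exponentially small. It must be made small relative to $t^{n/(2s)}\tau^{-2\delta}$, which competes with the factor $|B|=c\rho^n\tau^{n\gamma}$ coming from the local bound. If this direct balance fails, the first fallback would be to replace $\chi_B$ by the weighted energy $\int\rho^{-2s}$-adapted multiplier $e^{\psi}T_u\chi_B$ (a Davies-type perturbation with $\psi$ Lipschitz at scale $R$). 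The second fallback would be to iterate the splitting dyadically in $u$, gaining a fixed factor at each doubling of $\tau$; this would directly give a power law $\tau^{-\delta}$ with $\delta$ governed by the growth exponent $d_0$.
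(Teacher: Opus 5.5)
Your route differs from the paper's. The paper passes to the Dirichlet semigroup on $D=B(x,R)$: it proves $T_t1(x)\le T_t^D1_D(x)+Ct/R^{2s}$ using Dyda's formula for $(1-|y-x|^2/R^2)_+^s$ and a parabolic comparison principle, and then the form inequality gives a spectral gap $\lambda_D\ge c\inf_D\rho^{-2s}$ with no leakage at all. You stay on $\mathbb R^n$, which avoids the comparison principle, and instead must control the leakage $\varepsilon(u)$. That trade is viable, but as written the proposal has a genuine gap and does not prove the lemma.

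First, the statement fixes $\delta=1/(2d_0)$, and Lemma~\ref{fractkernel2} uses exactly this value through $N_0=2\delta\cdot s/(n+2s)$. A smaller exponent gives a weaker inequality, so ``shrinking if necessary'' is not available. Second, the leakage bound you name cannot reach $1/(2d_0)$:
\begin{itemize}
\item The tail term forces $R=\rho\tau^{(1+\delta)/(2s)}$, so $t^{-n/(4s)}|B|^{1/2}\simeq\tau^{n\delta/(4s)}$ grows.
\item With $K=\tau^{\kappa}$, the exponential term needs $1-a(1+\delta)-2sa\kappa>0$, i.e.\ $\kappa<1/(4sd_0)$.
\item With $\varepsilon(u)\lesssim u(KR)^{-2s}|B|\le\tau^{-\delta}K^{-2s}|B|$, the error term has size $\tau^{n\delta/(4s)-\delta/2-s\kappa}$. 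This is $\lesssim\tau^{-\delta}$ only if $\kappa\ge\delta(n+2s)/(4s^2)=(n+2s)/(8s^2d_0)$, which exceeds $1/(4sd_0)$.
\end{itemize}
So the direct balance you worried about really does fail with that estimate; it closes only for exponents below $2s/((n+4s)d_0)$. The decisive step is therefore left to unspecified fallbacks.

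The fix stays within your framework: use the pointwise tail of $p_u$ instead of its $L^1$ tail. For $|z-x|\ge KR$ with $K\ge2$ and $y\in B$, you have $|z-y|\ge|z-x|/2$. Hence $0\le T_u\chi_B(z)\le P_u\chi_B(z)\le Cu|B|\,|z-x|^{-n-2s}$, and for $u\le t/2$,
\[
\varepsilon(u)\le C|B|^2u^2(KR)^{-n-4s}= C|B|\,(u/R^{2s})^2K^{-n-4s}\le C|B|\,\tau^{-2\delta}K^{-n-4s}.
\]
The error term becomes $\tau^{n\delta/(4s)-\delta-\kappa(n/2+2s)}$, which is at most $\tau^{-\delta}$ once $\kappa\ge n\delta/(2s(n+4s))=n/(4sd_0(n+4s))$. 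This is compatible with $\kappa<1/(4sd_0)$ because $n<n+4s$. Take $R=\rho\tau^{(1+\delta)/(2s)}$ and $K=2\tau^{\kappa}$ with $\kappa$ in this window. Your Gronwall estimate then makes all three contributions $\lesssim\tau^{-\delta}$ with exactly $\delta=1/(2d_0)$, and neither the Davies perturbation nor the dyadic iteration is needed. Only the right-hand inequality in \eqref{rho_growth_for_kernel} is used, to define $\mu$; the left one bounds $\rho(z)$ from below and plays no role.
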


\begin{proof}
Fix $x\in\mathbb R^n$ and let
\[
D=B(x,R).
\]
Let $L_D$ denote the operator $L=(-\Delta)^s+V$ in $D$
with zero exterior condition, and let \( T_t^D=e^{-tL_D} \) be the corresponding semigroup.\\
We first show that
\begin{equation}\label{mass_localization}
T_t1(x)
\leq T_t^D1_D(x)+C\frac{t}{R^{2s}}.
\end{equation}
For this purpose, let $P_t^D$ denote the semigroup
for the free fractional Laplacian $(-\Delta)^s$ on $D$, and set
\[
q(t,y)=1-P_t^D1_D(y).
\]
Then $q$ satisfies the fractional heat equation in $D$, with
$q(0,y)=0$ in $D$ and $q(t,y)=1$ on $D^c$.
Consider
\[
\phi(y)=
\left(1-\frac{|y-x|^2}{R^2}\right)_+^s.
\]
where $a_+=\max\{a,0\}$.
By Theorem~1, formula~(1.5), of \cite{Dyda2012},
\[
(-\Delta)^s\phi(y)=C_{n,s}R^{-2s},
\qquad y\in D.
\]
Hence
\[
Q(t,y)
=
1-\phi(y)+C_{n,s}\frac{t}{R^{2s}}
\]
satisfies the same fractional heat equation in $D$ and
\[
q(0,y)\leq Q(0,y),\qquad
q(t,y)\leq Q(t,y)\quad\text{on }D^c.
\]
By the weak comparison principle
\cite[Lemma~2.2]{AbdellaouiBoucherifTouaoula2017},
\[
q(t,y)\leq Q(t,y).
\]
Since $\phi(x)=1$, it follows that
\begin{equation}\label{free_localization}
1-P_t^D1_D(x)
\leq C\frac{t}{R^{2s}}.
\end{equation}
Now set
\[
u(t,y)=T_t1_{\mathbb{R}^n}(y),
\qquad
v(t,y)=T_t^D1_D(y).
\]
Since $V\geq0$, we have $0\leq u\leq1$. Comparing $u-v$
with $q$ and applying the same comparison principle since the argument will remain unchanged after adding the nonnegative
zeroth-order term $V$, we obtain
\[
u(t,x)-v(t,x)
\leq q(t,x).
\]
Together with \eqref{free_localization}, this proves
\eqref{mass_localization}.
Next, let
\[
\lambda_D=\inf\sigma(L_D)
\]
be the bottom of the spectrum of $L_D$. By the variational characterization of $\lambda_D$ given as,
\[
\lambda_D
=
\inf_{\substack{u\in\mathcal{D}(\mathfrak l_D)\\u\neq 0}}
\frac{\mathfrak l_D[u,u]}{\|u\|_{L^2(D)}^2}
\]
and
\eqref{critical_radius_form_inequality}, we get
\[
\lambda_D
\geq c\inf_{z\in D}\rho(z)^{-2s}.
\]
If $z\in D=B(x,R)$, then
\eqref{rho_growth_for_kernel} gives
\[
\rho(z)
\leq C_0\rho(x)
\left(1+\frac{R}{\rho(x)}\right)^a.
\]
Consequently,
\begin{equation}\label{lambdaD}
\lambda_D
\geq
c\rho(x)^{-2s}
\left(1+\frac{R}{\rho(x)}\right)^{-2sa}.
\end{equation}

Since $V\geq0$, the Schr\"odinger semigroup on $D$ is
dominated by the free fractional heat semigroup on $D$.
Moreover, the free heat kernel is dominated by the
whole-space fractional heat kernel as in
\cite[(12)]{BogdanGrzywnyRyznar2010}. Therefore,
\[
\|T_{t}^D\|_{L^2(D)\to L^\infty(D)}
\leq Ct^{-n/(4s)}.
\]
On the other hand, by the spectral theorem,
\[
\|T_{t/2}^D\|_{L^2(D)\to L^2(D)}
\leq e^{-t\lambda_D/2}.
\]
Using the semigroup property, we obtain
\begin{align}
T_t^D1_D(x)
&\leq
Ct^{-n/(4s)}
e^{-t\lambda_D/2}|D|^{1/2}
\nonumber\\
&\leq
C\left(\frac{R}{t^{1/(2s)}}\right)^{n/2}
e^{-t\lambda_D/2}.
\label{Dirichlet_mass_estimate}
\end{align}
Set \(A=\frac{t}{\rho(x)^{2s}}.\)
Suppose first that $A\geq1$, and choose
\[
R=\rho(x)A^{(1+\delta)/(2s)}.
\]
Then \(\frac{t}{R^{2s}}=A^{-\delta}.\)
Moreover, from \eqref{lambdaD}
\begin{align*}
t\lambda_D
&\geq
cA
\left(
1+A^{(1+\delta)/(2s)}
\right)^{-2sa}
\\
&\geq
cA^{1-a(1+\delta)}.
\end{align*}
By the definitions of $a$ and $\delta$,
\[
1-a(1+\delta)
=
\frac{1}{2(d_0+1)}>0.
\]
Thus
\[
t\lambda_D
\geq
cA^{1/[2(d_0+1)]}.
\]
Hence \eqref{mass_localization} and
\eqref{Dirichlet_mass_estimate} give
\[
h_t(x)
\leq
CA^{-\delta}
+
CA^{n\delta/(4s)}
e^{-cA^{1/[2(d_0+1)]}}.
\]
Since the exponential term decreases faster than any negative
power of $A$,
\[
h_t(x)\leq CA^{-\delta},
\qquad A\geq1.
\]
Finally, if $A<1$, then $V\geq0$ implies
\[
0\leq h_t(x)=T_t1(x)\leq1.
\]
Combining the two cases, we obtain
\[
h_t(x)
\leq
C\left(
1+\frac{t}{\rho(x)^{2s}}
\right)^{-\delta}.
\]
This proves \eqref{semigroup_mass_bound}.
\end{proof}

\begin{lemma}[Heat-kernel estimate with critical-radius decay]
\label{fractkernel2}
Set
\[
N_0=\frac{s}{d_0(n+2s)}>0.
\]
There exists a constant $C>0$ such that
\begin{equation}\label{heat_kernel_critical_bound}
k_t^L(x,y)
\leq Ct^{-n/(2s)}
\left(
1+\frac{\sqrt t}{\rho(x)^s}
+\frac{\sqrt t}{\rho(y)^s}
\right)^{-N_0}
\left(
1+\frac{|x-y|}{t^{1/(2s)}}
\right)^{-n-s}
\end{equation}
for every $t>0$ and almost every $x,y\in\mathbb R^n$.
The same estimate holds with $N_0$ replaced by any
$N\in(0,N_0]$.
\end{lemma}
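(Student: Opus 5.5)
The plan is to combine the free-kernel bound \eqref{free_domination_size} with the mass decay of Lemma~\ref{semigroup_mass_decay} through the semigroup property. The off-diagonal exponent is relaxed from $n+2s$ to $n+s$, which leaves room for interpolation. First, by symmetry of $k_t^L$ it suffices to obtain the factor $(1+\sqrt t/\rho(x)^s)^{-N_0}$. Once the same bound holds with $\rho(y)$, taking the minimum of the two estimates gives the claim, because
\[
\min\{(1+a)^{-N},(1+b)^{-N}\}\le 2^{N}(1+a+b)^{-N}.
\]
Also $(1+\sqrt t/\rho^s)\sim (1+t/\rho^{2s})^{1/2}$. Hence the target is
\[
k_t^L(x,y)\lesssim t^{-n/(2s)}\Bigl(1+\tfrac{t}{\rho(x)^{2s}}\Bigr)^{-N_0/2}\Bigl(1+\tfrac{|x-y|}{t^{1/(2s)}}\Bigr)^{-n-s}.
\]
The last sentence of the lemma is immediate, since the bracket is $\ge 1$.

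Step one is a pointwise on-diagonal-type bound carrying the mass decay. Write $k_t^L(x,y)=\int k_{t/2}^L(x,z)\,k_{t/2}^L(z,y)\,dz$ and bound $k_{t/2}^L(z,y)\le p_{t/2}(z-y)\le Ct^{-n/(2s)}$. This gives
\[
k_t^L(x,y)\le Ct^{-n/(2s)}\,h_{t/2}(x)\le Ct^{-n/(2s)}\Bigl(1+\tfrac{t}{\rho(x)^{2s}}\Bigr)^{-\delta},\qquad \delta=\tfrac1{2d_0}.
\]
Step two is the plain free bound $k_t^L(x,y)\le Ct^{-n/(2s)}(1+|x-y|t^{-1/(2s)})^{-n-2s}$. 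Step three takes a geometric mean: $k\le k^{\theta}k^{1-\theta}$, using the step-one bound with weight $\theta$ and the step-two bound with weight $1-\theta$. This yields spatial decay of exponent $(1-\theta)(n+2s)$ and critical decay of exponent $\theta\delta$. Choosing $\theta=s/(n+2s)$ gives exactly $(1-\theta)(n+2s)=n+s$, and then
\[
\theta\delta=\frac{s}{2d_0(n+2s)}=\frac{N_0}{2},
\]
which is precisely the required power of $(1+t/\rho(x)^{2s})$. This explains the specific value of $N_0$.

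I expect the main obstacle to be rigor rather than the computation. Step one needs the Chapman--Kolmogorov identity together with the almost-everywhere kernel bounds, and symmetry $k_t^L(x,z)=k_t^L(z,x)$, so that $\int k_{t/2}^L(x,z)\,dz=h_{t/2}(x)$. It also needs $(1+t/(2\rho^{2s}))^{-\delta}\sim(1+t/\rho^{2s})^{-\delta}$. Both are routine given \eqref{semigroup_kernel_representation} and Lemma~\ref{semigroup_mass_decay}. A final point is that the $\rho(y)$ version follows from the same argument with the roles of $x$ and $y$ swapped, using $h_{t/2}(y)$ and the symmetry of the kernel.
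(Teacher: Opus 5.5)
Your proposal is correct and follows essentially the same route as the paper. Both use the semigroup property with the free on-diagonal bound to bring in the mass decay of Lemma~\ref{semigroup_mass_decay}, then take a geometric mean with the free off-diagonal bound using the same weight $s/(n+2s)$. The difference is cosmetic: the paper splits at $u=t/3$ to get $h_u(x)h_u(y)$ in one step and uses $\bigl(1+\frac{t}{\rho(x)^{2s}}\bigr)\bigl(1+\frac{t}{\rho(y)^{2s}}\bigr)\geq\frac13\bigl(1+\frac{\sqrt t}{\rho(x)^s}+\frac{\sqrt t}{\rho(y)^s}\bigr)^2$, whereas you split at $t/2$, obtain the one-sided factor, and symmetrize via the minimum, which gives the same exponent $N_0$.
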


\begin{proof}
Put $u=t/3$. The semigroup property, symmetry, and the
uniform bound $k_u^L(z,w)\leq Cu^{-n/(2s)}$ give
\begin{align*}
k_t^L(x,y)
&=\iint_{\mathbb R^n\times\mathbb R^n}
k_u^L(x,z)k_u^L(z,w)k_u^L(w,y)\,dz\,dw\\
&\leq Ct^{-n/(2s)}h_u(x)h_u(y)\\
&\leq Ct^{-n/(2s)}
\left(1+\frac t{\rho(x)^{2s}}\right)^{-\delta}
\left(1+\frac t{\rho(y)^{2s}}\right)^{-\delta}.
\end{align*}
Define
\[
A=1+\frac{\sqrt t}{\rho(x)^s}
+\frac{\sqrt t}{\rho(y)^s},
\qquad
B=1+\frac{|x-y|}{t^{1/(2s)}}.
\]
Since
\[
\left(1+\frac t{\rho(x)^{2s}}\right)
\left(1+\frac t{\rho(y)^{2s}}\right)
\geq \frac{A^2}{3},
\]
we have
\[
k_t^L(x,y)\leq Ct^{-n/(2s)}A^{-2\delta}.
\]
On the other hand, \eqref{free_domination_size} gives
\[
k_t^L(x,y)\leq Ct^{-n/(2s)}B^{-n-2s}.
\]
Taking the geometric mean with weights
$\eta=s/(n+2s)$ and $1-\eta$, respectively, gives
\[
k_t^L(x,y)
\leq Ct^{-n/(2s)}
A^{-2\delta\eta}B^{-(n+2s)(1-\eta)}
=Ct^{-n/(2s)}A^{-N_0}B^{-n-s}.
\]
\end{proof}

\subsection{Estimates below and above the critical time}

Combining \eqref{free_domination_size} and
\eqref{heat_kernel_critical_bound}, and using
\[
1+\frac{\sqrt t}{\rho(x)^s}
+\frac{\sqrt t}{\rho(y)^s}
\geq
1+\frac{\sqrt t}{\rho(x)^s},
\]

we obtain
\begin{align}
k_t^L(x,y)
&\leq Ct^{-n/(2s)}
\left(
1+\frac{|x-y|}{t^{1/(2s)}}
\right)^{-n-s}
\nonumber\\
&\quad\times
\min\left\{
\left(
1+\frac{|x-y|}{t^{1/(2s)}}
\right)^{-s},
\left(
1+\frac{\sqrt t}{\rho(x)^s}
\right)^{-N_0}
\right\}.
\label{kernelest3}
\end{align}

Set $R=\rho(x)$ and $r=|x-y|$. For $0<t\leq R^{2s}$,
free-kernel domination yields
\[
k_t^L(x,y)
\leq Ct^{-n/(2s)}
\left(1+\frac r{t^{1/(2s)}}\right)^{-n-2s}.
\]

In particular, if $r\geq R$, then
\begin{equation}\label{small_time_far_kernel}
k_t^L(x,y)
\leq \frac{Ct}{r^{n+2s}}
\leq \frac{CR^{2s}}{r^{n+2s}}.
\end{equation}

For $t>R^{2s}$, use the second term as an upper bound for
the minimum in \eqref{kernelest3}. This gives
\begin{equation}\label{large_time_kernel}
k_t^L(x,y)
\leq CR^{sN_0}t^{-(n+sN_0)/(2s)}
\left(1+\frac r{t^{1/(2s)}}\right)^{-n-s}.
\end{equation}

These estimates also yield a bound for the maximal kernel
away from the critical ball.

\begin{lemma}\label{maximal_far_kernel}
Set $\beta_0=s\min\{N_0,1\}>0$. For almost every $x,y$
with $|x-y|\geq\rho(x)$,
\[
\sup_{t>0}k_t^L(x,y)
\leq C\frac{\rho(x)^{\beta_0}}
{|x-y|^{n+\beta_0}}.
\]
\end{lemma}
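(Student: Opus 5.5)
Write $R=\rho(x)$ and $r=|x-y|\geq R$. Split the supremum over $t>0$ at the critical time $t=R^{2s}$ and use the two estimates just derived: \eqref{small_time_far_kernel} for small times and \eqref{large_time_kernel} for large times.

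\emph{Small times.} For $0<t\leq R^{2s}$, \eqref{small_time_far_kernel} gives $k_t^L(x,y)\leq CR^{2s}r^{-n-2s}$. Since $\beta_0\leq s<2s$ and $R/r\leq1$, we have $(R/r)^{2s}\leq(R/r)^{\beta_0}$. Hence
\[
k_t^L(x,y)\leq C\frac{R^{\beta_0}}{r^{n+\beta_0}}.
\]

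\emph{Large times.} For $t>R^{2s}$, \eqref{large_time_kernel} gives
\[
k_t^L(x,y)\leq CR^{sN_0}t^{-(n+sN_0)/(2s)}\bigl(1+r t^{-1/(2s)}\bigr)^{-n-s}.
\]
Set $\tau=t^{1/(2s)}>R$. The bound becomes $CR^{sN_0}\tau^{-n-sN_0}(1+r/\tau)^{-n-s}$, and we split into two cases.

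\begin{enumerate}[label=(\alph*)]
\item If $\tau\geq r$, drop the last factor. Then $\tau^{-n-sN_0}\leq r^{-n-sN_0}$, so the bound is $CR^{sN_0}r^{-n-sN_0}$.
\item If $R<\tau<r$, use $(1+r/\tau)^{-n-s}\leq (\tau/r)^{n+s}$. The bound becomes $CR^{sN_0}\tau^{s-sN_0}r^{-n-s}$.
\begin{itemize}
\item If $N_0\leq1$, the exponent $s-sN_0\geq0$, so the expression is largest at $\tau=r$. This gives $CR^{sN_0}r^{-n-sN_0}$.
\item If $N_0>1$, the exponent is negative, so the expression is largest at $\tau=R$. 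This gives $CR^{s}r^{-n-s}$.
\end{itemize}
\end{enumerate}

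In every case the bound has the form $C R^{\gamma}r^{-n-\gamma}$ with $\gamma\in\{sN_0,s\}$ and $\gamma\geq\beta_0=s\min\{N_0,1\}$. Since $R\leq r$, we have $(R/r)^{\gamma}\leq(R/r)^{\beta_0}$. So each bound is at most $CR^{\beta_0}r^{-n-\beta_0}$.

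Taking the supremum over $t$ completes the proof. The constants are uniform because \eqref{small_time_far_kernel} and \eqref{large_time_kernel} hold for almost every $x,y$ with constants independent of $t$; the supremum can be taken over rational $t$ using continuity of the semigroup in $t$, or the bound can be read pointwise for a.e. pair.

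The only delicate point is case (b) of the large-time regime. There the maximizing $t$ depends on the sign of $1-N_0$, and this is exactly why the exponent is $\beta_0=s\min\{N_0,1\}$ rather than $sN_0$.
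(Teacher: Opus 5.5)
Your proof is correct and follows essentially the same route as the paper: split at the critical time $t=\rho(x)^{2s}$, use the small-time far bound below it, and above it split again at $t^{1/(2s)}=|x-y|$. The only difference is that the paper first lowers the critical-radius exponent from $N_0$ to $\beta_0/s$ (permitted by the last sentence of Lemma~\ref{fractkernel2}), which removes your case distinction between $N_0\le 1$ and $N_0>1$.
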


\begin{proof}
Write $R=\rho(x)$, $r=|x-y|\geq R$, and
$\ell=t^{1/(2s)}$. For $\ell\leq R$, the conclusion follows
from \eqref{small_time_far_kernel}, since $\beta_0\leq2s$.

For $\ell>R$, weaken the critical-radius exponent in
\eqref{heat_kernel_critical_bound} from $N_0$ to
$\beta_0/s$. Then
\[
k_t^L(x,y)
\leq CR^{\beta_0}\ell^{-n-\beta_0}
(1+r/\ell)^{-n-s}.
\]
If $R<\ell\leq r$, this is at most
\[
CR^{\beta_0}r^{-n-s}\ell^{s-\beta_0}
\leq CR^{\beta_0}r^{-n-\beta_0},
\]
because $\beta_0\leq s$. If $\ell\geq r$, it is at most
\[
CR^{\beta_0}\ell^{-n-\beta_0}
\leq CR^{\beta_0}r^{-n-\beta_0}.
\]
\end{proof}

\subsection{Local weighted estimates}

Define the free semigroup maximal operator by
\[
T^*f(x)=\sup_{t>0}|P_tf(x)|.
\]
For $0<\alpha s<n$, define, initially for Schwartz functions,
\begin{align*}
I_\alpha f(x)
&=\bigl((-\Delta)^s\bigr)^{-\alpha/2}f(x)\\
&=\frac1{\Gamma(\alpha/2)}
\int_0^\infty P_tf(x)\,
t^{\alpha/2}\,\frac{dt}{t}\\
&=c_{n,\alpha s}
\int_{\mathbb R^n}
\frac{f(y)}{|x-y|^{n-\alpha s}}\,dy.
\end{align*}
Thus $I_\alpha$ is a constant multiple of the classical
Riesz potential of order $\alpha s$. For
$S\in\{M,T^*,I_\alpha\}$, use the localization
\[
S_{\mathrm{loc}}f(x)
=S\bigl(f\chi_{B(x,\rho(x))}\bigr)(x),
\]
where $M$ is the Hardy--Littlewood maximal operator. Here, we follow the approach of \cite{MR2720705}, using suitable weight extensions to transfer the corresponding estimates to the local Muckenhoupt classes.

\begin{theorem}\label{localisation}
The following statements hold.
\begin{enumerate}
\item
If $1<p<\infty$ and $w\in A_p^{\rho,\mathrm{loc}}$, then
\[
\|M_{\mathrm{loc}}f\|_{L^p(w)}
+\|T^*_{\mathrm{loc}}f\|_{L^p(w)}
\leq C\|f\|_{L^p(w)}.
\]
For $w\in A_1^{\rho,\mathrm{loc}}$, both operators are
bounded from $L^1(w)$ to $L^{1,\infty}(w)$.

\item
Let $0<\alpha s<n$, $1<p<n/(\alpha s)$, and
\[
\frac1\nu=\frac1p-\frac{\alpha s}{n}.
\]
If
\[
w^{\nu/p}\in
A_{1+\nu/p'}^{\rho,\mathrm{loc}},
\]
then
\[
\|(I_\alpha)_{\mathrm{loc}}f\|_{L^\nu(w^{\nu/p})}
\leq C\|f\|_{L^p(w)}.
\]

\item
Let $0<\alpha s<n$ and
\[
\nu_1=\frac{n}{n-\alpha s}.
\]
If $w^{\nu_1}\in A_1^{\rho,\mathrm{loc}}$, then
\[
\|(I_\alpha)_{\mathrm{loc}}f\|
_{L^{\nu_1,\infty}(w^{\nu_1})}
\leq C\|f\|_{L^1(w)}.
\]
\end{enumerate}
Here weak Lebesgue spaces are taken with respect to the
indicated weighted measure.
\end{theorem}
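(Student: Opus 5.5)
The plan is to follow the localization scheme of \cite{MR2720705}: reduce each $\rho$-localized operator to a covering localization $S_0$ as in \eqref{coveringoperator}, and then apply Lemma~\ref{covering} with uniform extensions of the weight from Lemma~\ref{extension}.

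\emph{Step 1: reduce to the covering localization.} Fix the critical balls $Q_j=B(x_j,\rho(x_j))$ from Lemma~\ref{dyadicdec}. By Lemma~\ref{fractauxiliary}(i), whenever $x\in Q_j$ we have $\rho(x)\sim\rho(x_j)$. Hence there is a fixed $\kappa\ge1$ such that $B(x,\rho(x))\subseteq\widetilde Q_j=\kappa Q_j$ for every $x\in Q_j$. Lemma~\ref{dyadicdec}(ii) with $\sigma=\kappa$ gives the bounded overlap of the $\widetilde Q_j$.

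For $S\in\{M,T^*,I_\alpha\}$ the operator $S$ is positive in the sense that $|S(g)|\le S(|g|)$ and $S$ is monotone in $|g|$. Since $T^*$ has the nonnegative kernel $p_t$ and $I_\alpha$ has a positive kernel, we get, for $x\in Q_j$,
\[
|S_{\mathrm{loc}}f(x)|\le S\bigl(|f|\chi_{\widetilde Q_j}\bigr)(x).
\]
Therefore $|S_{\mathrm{loc}}f|\le S_0(|f|)$ pointwise. By the free kernel estimate, $T^*g\lesssim Mg$, so $T^*_{\mathrm{loc}}$ is controlled by $M$ applied to $|f|\chi_{\widetilde Q_j}$; in particular (1) for $T^*$ follows from the corresponding statement for $M$.

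\emph{Step 2: construct the uniform extensions.} Let $u$ denote the relevant weight: $u=w$ in (1), $u=w^{\nu/p}$ in (2), $u=w^{\nu_1}$ in (3). Each lies in some $A_r^{\rho,\mathrm{loc}}$. By Lemma~\ref{beta} with $\beta=\kappa$ we also have $u\in A_r^{\kappa\rho,\mathrm{loc}}$, with a constant depending only on $\kappa$.

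Next, every ball $B=B(z,r')\subseteq\widetilde Q_j$ has $r'\le\kappa\rho(x_j)$. Using Lemma~\ref{fractauxiliary}(i) once more, $\rho(x_j)\lesssim\rho(z)$, so after enlarging $\beta$ by a fixed factor we get $r'\le\beta\rho(z)$. Thus $u|_{\widetilde Q_j}\in A_r(\widetilde Q_j)$ with a constant independent of $j$. Lemma~\ref{extension} then yields extensions $u_j\in A_r(\mathbb R^n)$ with uniform constants.

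In cases (2) and (3) we extend $u$, not $w$, and set $w_j=u_j^{p/\nu}$ (respectively $u_j^{1/\nu_1}$). Then $w_j=w$ on $\widetilde Q_j$, and $w_j^{\nu/p}=u_j\in A_{1+\nu/p'}$ (respectively $w_j^{\nu_1}\in A_1$), again with uniform constants.

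\emph{Step 3: apply the classical bounds and conclude.} The classical weighted bounds have constants depending only on the Muckenhoupt constant. These are: Muckenhoupt's theorem and the $A_1$ weak type for $M$; the Muckenhoupt--Wheeden strong-type bound for the Riesz potential of order $\gamma=\alpha s$; and its weak-type endpoint for $w^{\nu_1}\in A_1$, recalled in \S\ref{intro}. They give \eqref{uniformoperatorbound}, or its weak version, with a constant $A$ independent of $j$. Lemma~\ref{covering} with $\nu=p$ in (1), and with the stated $\nu\ge p$ or $\nu_1\ge1$ in (2)--(3), then bounds $S_0$, and Step~1 transfers the bound to $S_{\mathrm{loc}}$.

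The main obstacle I expect is Step 2. One has to check carefully that the local Muckenhoupt condition holds on \emph{every} subball of $\widetilde Q_j$, whose radius is measured against $\rho$ at its own centre rather than at $x_j$; this is where Lemma~\ref{beta} and the comparability in Lemma~\ref{fractauxiliary}(i) are both essential. For the fractional integral one must also track that the extension is taken for the power $w^{\nu/p}$, so that the relation between $w_j$ and its power is preserved.
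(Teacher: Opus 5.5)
Your proposal is correct and follows essentially the same route as the paper: the critical-ball covering with $\widetilde Q_j=\kappa Q_j$, Lemma~\ref{beta} and Lemma~\ref{extension} for uniform $A_r(\mathbb R^n)$ extensions, Lemma~\ref{covering} for $S_0$, and the pointwise bound $|S_{\mathrm{loc}}f|\le S_0(|f|)$; your explicit handling of subballs of $\widetilde Q_j$ (radius measured against $\rho$ at the centre) and of extending $w^{\nu/p}$ rather than $w$ fills in details the paper passes over quickly. One minor fix: $\kappa$ may exceed $C_0$, so for centres $z\in\widetilde Q_j$ the bound $\rho(x_j)\lesssim\rho(z)$ should come from Lemma~\ref{fractauxiliary}(ii), which gives $\rho(z)\ge C_0^{-1}(1+\kappa)^{-d_0}\rho(x_j)$, rather than from part (i).
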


\begin{proof}
Let $Q_j=B(x_j,\rho(x_j))$ be the critical-ball covering
from Lemma~\ref{dyadicdec}. Set
\[
\kappa=1+C_0 2^a,
\qquad
\widetilde Q_j=\kappa Q_j.
\]
For $x\in Q_j$, \eqref{rho_growth_for_kernel} gives
\[
\rho(x)\leq C_0 2^a\rho(x_j).
\]
Therefore
\[
B(x,\rho(x))\subseteq\widetilde Q_j,
\qquad x\in Q_j.
\]
 As, $ w \in A_{p}^{\rho,loc}, 1\leq p < \infty $ then, $ w \in A_{p}^{\kappa \rho,loc} $
    by lemma~\ref{beta}, then for any j, $w|_{\tilde{Q_{j}}} \in A_{p}(\tilde{Q_{j}}).$
Applying lemma~\ref{extension}, we obtain for each $j$ an extension $w_{j}$ such that $w_{j} \in A_p(\mathbb{R}^n)$ and uniformly in $j$. Since we know that $M, T^{*}$ are bounded on $L^{p}(w)$, $w \in A_{p}, p>1$ and of weak type $(1,1)$ for $w \in A_{1}$.\\
Hence we get that $M_{0}, T^{*}_{0}$ associated to the covering ${Q_{j}}$ are bounded in $L^{p}(w), 1< p< \infty$ for $w \in A_{p}^{\rho,loc}$ and of weak type $(1,1)$ for $w \in A_{1}^{\rho,loc}$ in view of lemma~\ref{covering}. 
The corresponding estimates for \(M_{\mathrm{loc}}\) and
\(T_{\mathrm{loc}}^*\) now follow immediately. Indeed by \eqref{coveringoperator} and \eqref{localoperator},
\[
M_{\mathrm{loc}}f(x)
\leq M(f\chi_{\widetilde Q_j})(x)
\leq M_0f(x),
\]
and, similarly,
\[
T_{\mathrm{loc}}^*f(x)
\leq T_0^*(|f|)(x).
\]
Moreover, the classical fractional integral of order $0<\alpha s<n$, $1 < p < n/(\alpha s)$ is
bounded from $L^p(w)$ into $L^\nu(w^{\nu/p})$, where
\[
\frac{1}{\nu}=\frac{1}{p}-\frac{\alpha s}{n},
\]
provided that $w^{\nu/p}\in A_{1+\nu/p'}$, and it is of weak
type $\left(1,\frac{n}{n-\alpha s}\right)$ for weights satisfying
\[
w^{\frac{n}{n-\alpha s}}\in A_1.
\]
By Lemma \ref{covering}, the operator $(I_\alpha)_0$
associated with the covering $\{Q_j\}$ inherits the
weighted strong- and weak-type estimates of the classical
fractional integral. And, for \(f\geq0\),
\[
(I_\alpha)_{\mathrm{loc}}f(x)
\leq (I_\alpha)_0f(x).
\]
Consequently, the weighted strong- and weak-type estimates for
$M_{\mathrm{loc}}$, $T_{\mathrm{loc}}^*$, and
$(I_\alpha)_{\mathrm{loc}}$ follow immediately from the corresponding
estimates established for $M_0$, $T_0^*$, and $(I_\alpha)_0$,
respectively. Hence, all the assertions of theorem follow.

\end{proof}

\section{Weighted estimates for the semigroup maximal operator}\label{weighted estimates}
We apply the preceding kernel estimates to the semigroup maximal
operator
\[
\mathcal T^*f(x)=\sup_{t>0}|e^{-tL}f(x)|.
\]
The local contribution is controlled by the localized free maximal
operator. For the global contribution, we combine the decay away
from the critical ball with weighted estimates for averages over
dilated critical balls. This gives a sufficient range of weight
parameters determined by the fixed kernel-decay exponent.

We retain the standing assumptions on $L$, $V$, and $\rho$ from
the preceding section. Set
\[
N_0=\frac{s}{d_0(n+2s)},
\qquad
\beta_0=s\min\{N_0,1\}>0.
\]
By Lemma~\ref{maximal_far_kernel},
\begin{equation}\label{maximal_far_bound_used}
\sup_{t>0}k_t^L(x,y)
\leq C\frac{\rho(x)^{\beta_0}}{|x-y|^{n+\beta_0}},
\qquad |x-y|\geq\rho(x),
\end{equation}
for almost every $x,y$.

We first establish the averaging estimate needed for the
global part.

\begin{lemma}[Averages over dilated critical balls]
\label{dilated_critical_averages}
Let $1\leq p<\infty$, $\theta\geq0$, and
$w\in A_p^{\rho,\theta}$. For $\lambda\geq1$, define
\[
\mathcal A_\lambda f(x)
=
\frac1{|B(x,\lambda\rho(x))|}
\int_{B(x,\lambda\rho(x))}|f(y)|\,dy.
\]
Then
\begin{equation}\label{critical_average_norm}
\|\mathcal A_\lambda f\|_{L^p(w)}
\leq C(1+\lambda)^\theta
(1+\log\lambda)^{1/p}\|f\|_{L^p(w)},
\end{equation}
where $C$ is independent of $\lambda$ and $f$.
\end{lemma}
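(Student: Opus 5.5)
We localize with the critical-ball covering of Lemma~\ref{dyadicdec}, as in the proof of Theorem~\ref{localisation}. Let $Q_j=B(x_j,\rho(x_j))$. For $x\in Q_j$, \eqref{rho_growth_for_kernel} gives $\rho(x)\sim\rho(x_j)$, with comparison constants depending only on $C_0,d_0$. Hence there is a constant $c\geq1$ such that
\[
B(x,\lambda\rho(x))\subseteq B_j^\lambda:=B(x_j,c\lambda\rho(x_j)),
\qquad
|B(x,\lambda\rho(x))|\sim|B_j^\lambda|,
\]
uniformly in $x\in Q_j$. Consequently
\[
\mathcal A_\lambda f(x)\leq C\,\frac{1}{|B_j^\lambda|}\int_{B_j^\lambda}|f|,
\qquad x\in Q_j,
\]
and so
\[
\|\mathcal A_\lambda f\|_{L^p(w)}^p\leq C\sum_j w(Q_j)\Bigl(\frac1{|B_j^\lambda|}\int_{B_j^\lambda}|f|\Bigr)^p .
\]

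Next we apply H\"older's inequality with the weight. For $p>1$,
\[
\int_{B}|f|\leq\Bigl(\int_B|f|^pw\Bigr)^{1/p}\Bigl(\int_B w^{-1/(p-1)}\Bigr)^{1/p'}.
\]
Since $Q_j\subseteq B_j^\lambda$, we have $w(Q_j)\leq w(B_j^\lambda)$. The $A_p^{\rho,\theta}$ condition \eqref{Ap3} on $B_j^\lambda$, whose radius is $c\lambda\rho(x_j)$, then gives
\[
w(Q_j)\Bigl(\frac1{|B_j^\lambda|}\int_{B_j^\lambda}|f|\Bigr)^p\leq C(1+c\lambda)^{\theta p}\int_{B_j^\lambda}|f|^pw .
\]
For $p=1$ the same bound follows from \eqref{Ap4}, using $\int_B|f|\leq \operatorname*{ess\,sup}_B w^{-1}\int_B|f|w$. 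Summing over $j$, we obtain
\[
\|\mathcal A_\lambda f\|_{L^p(w)}^p\leq C(1+\lambda)^{\theta p}\int|f|^pw\sum_j\chi_{B_j^\lambda}.
\]

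The main obstacle is the overlap function $\sum_j\chi_{B_j^\lambda}$. Lemma~\ref{dyadicdec}(ii) only bounds it by $C\lambda^{N_1}$, which is too large, since the statement claims the much smaller factor $(1+\log\lambda)$. I would refine the argument by decomposing dyadically. Write $\lambda\sim 2^k$ and compare the dilates $2^iQ_j$, $i\le k$.

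Fix $y$ and consider the indices $j$ with $y\in B_j^\lambda$. Split them according to the size of $|y-x_j|/\rho(x_j)$, which lies in a dyadic range $[2^{i-1},2^i)$ for some $i\le k+C$. Within one such shell, the radii $\rho(x_j)$ are mutually comparable, because \eqref{rho_growth_for_kernel} relates each of them to $\rho(y)$. However, that relation carries a factor $2^{id_0}$, which is the second difficulty. To get around it, I would not count balls. Instead I would bound the contribution of each shell by a Hardy--Littlewood-type average: the sum over the $j$ in the $i$-th shell of $w(Q_j)|B_j^\lambda|^{-p}\bigl(\int_{B_j^\lambda}|f|\bigr)^p$ should be controlled by $C(1+\lambda)^{\theta p}\int|f|^pw$, using that the $Q_j$ within a shell have bounded overlap at their own scale. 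Summing over the $O(1+\log\lambda)$ shells then gives the factor $(1+\log\lambda)$, and taking $p$-th roots yields \eqref{critical_average_norm}. If this shell counting cannot be made uniform, the fallback is to accept a polynomial loss $\lambda^{N_1/p}$. That weaker form would still suffice for the later dyadic sums, provided $\beta_0$ absorbs it, but it would not prove the stated logarithmic bound.
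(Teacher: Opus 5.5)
Your reduction up to
\[
\|\mathcal A_\lambda f\|_{L^p(w)}^p\leq C(1+\lambda)^{\theta p}\int|f|^pw\sum_j\chi_{B_j^\lambda}
\]
is correct, but the loss it produces is real and is not an artifact of Lemma~\ref{dyadicdec}. If $V$ is a positive constant, then $\rho$ is constant and the $\lambda$-dilates of the critical balls overlap on the order of $\lambda^n$ times. The information is discarded at the step $w(Q_j)\leq w(B_j^\lambda)$, where you apply the $A_p^{\rho,\theta}$ condition separately to each enlarged ball.

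Your proposed repair does not close this gap. The per-shell bound is only asserted (``should be controlled''). As written it is not even well defined: your shells depend on $y$, while the quantity $w(Q_j)|B_j^\lambda|^{-p}(\int_{B_j^\lambda}|f|)^p$ does not. More importantly, $|y-x_j|/\rho(x_j)$ is the wrong parameter. As you observe, inside the $i$-th shell the ratio $\rho(x_j)/\rho(y)$ still ranges over a factor of order $2^{i(d_0+a)}$, where $a=d_0/(d_0+1)$. So there is no single ball of comparable size on which to use the weight condition. Refining each shell separately by scale would give $O((1+\log\lambda)^2)$ pieces, not $O(1+\log\lambda)$.

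The fallback is also insufficient. In the constant-$\rho$ example $N_1\geq n$, while $\beta_0\leq s<1$. Hence the series $\sum_j 2^{-j(\beta_0-\theta)}2^{jN_1/p}$ in the proof of Theorem~\ref{main1} diverges unless $p>N_1/(\beta_0-\theta)$, and in particular it diverges at $p=1$.

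The missing idea is to keep $w(Q_j)$, exchange summation and integration, and group by scale rather than by distance. After H\"older with $\sigma=w^{-1/(p-1)}$, your sum is at most
\[
\int|f(y)|^pw(y)\sum_{j:\,y\in B_j^\lambda}w(Q_j)\sigma(B_j^\lambda)^{p-1}|B_j^\lambda|^{-p}\,dy .
\]
Fix $y$. The condition $y\in B_j^\lambda$ together with \eqref{rho_growth_for_kernel} confines $\lambda\rho(x_j)/\rho(y)$ to an interval whose endpoints have ratio $C(1+\lambda)^{d_0+a}$, hence to $O(1+\log\lambda)$ dyadic ranges $[2^m,2^{m+1})$.

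Within one range, every $B_j^\lambda$ contains $y$ and has radius comparable to $2^m\rho(y)$. So these balls, and the $Q_j$, all lie in a single ball $D_m$ of comparable radius. Centre $D_m$ at one of the $x_j$ of that range rather than at $y$; this keeps the ratio of its radius to $\rho$ of its centre $\lesssim\lambda$. Bounded overlap of the $Q_j$ at their own scale then gives $\sum w(Q_j)\leq Cw(D_m)$. One application of the $A_p^{\rho,\theta}$ condition on $D_m$ bounds the whole range by $C(1+\lambda)^{p\theta}$. For $p=1$, use instead $w(D_m)/|D_m|\leq C(1+\lambda)^\theta w(y)$ at Lebesgue points $y$.

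The paper does exactly this, without the covering. It applies H\"older pointwise in $x$ and uses Tonelli to reduce to
\[
J_\lambda(y)=\int_{E_\lambda(y)}w(x)\sigma(B_\lambda(x))^{p-1}|B_\lambda(x)|^{-p}\,dx .
\]
It then splits $E_\lambda(y)$ into the sets $E_m(y)$ on which $2^m\rho(y)\leq\lambda\rho(x)<2^{m+1}\rho(y)$, each contained in $D_m=B(x_m,6R_m)$.
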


\begin{proof}
Write
\[
B_\lambda(x)=B(x,\lambda\rho(x)),
\qquad
E_\lambda(y)=\{x:y\in B_\lambda(x)\}.
\]
Let $a=d_0/(d_0+1)$. If $x\in E_\lambda(y)$, the
critical-radius comparison \eqref{rho_growth_for_kernel} gives
\[
\frac{\lambda}{C_0(1+\lambda)^a}
\leq \frac{\lambda\rho(x)}{\rho(y)}
\leq C_0\lambda(1+\lambda)^{d_0}.
\]
Consequently, for each fixed $y$, the sets
\[
E_m(y)=
\left\{
x\in E_\lambda(y):
2^m\rho(y)\leq\lambda\rho(x)<2^{m+1}\rho(y)
\right\},
\qquad m\in\mathbb Z,
\]
are nonempty for at most $C(1+\log\lambda)$ indices.\\
For every nonempty $E_m(y)$, choose $x_m\in E_m(y)$ and set
\[
R_m=2^m\rho(y),
\qquad
D_m=B(x_m,6R_m).
\]
For $x\in E_m(y)$, we have
\[
|x-x_m|
\leq |x-y|+|y-x_m|<4R_m.
\]
Hence
\[
B_\lambda(x)\subseteq D_m,
\qquad
|B_\lambda(x)|\geq6^{-n}|D_m|.
\]
Moreover, 
\[
\frac{6R_m}{\rho(x_m)}\leq6\lambda.
\]
Suppose first that $1<p<\infty$, and write
$\sigma=w^{-1/(p-1)}$. H\"older's inequality, Tonelli's
theorem and \eqref{weightmeasure} give
\begin{align*}
\|\mathcal A_\lambda f\|_{L^p(w)}^p
&\leq
\int_{\mathbb R^n}
\frac{w(x)\sigma(B_\lambda(x))^{p-1}}
{|B_\lambda(x)|^p}
\int_{B_\lambda(x)}|f(y)|^p w(y)\,dy\,dx\\
&=
\int_{\mathbb R^n}|f(y)|^p w(y)J_\lambda(y)\,dy,
\end{align*}
where
\[
J_\lambda(y)=
\int_{E_\lambda(y)}
\frac{w(x)\sigma(B_\lambda(x))^{p-1}}
{|B_\lambda(x)|^p}\,dx.
\]
For each nonempty $E_m(y)$, the preceding inclusions and the
$A_p^{\rho,\theta}$ condition yield
\begin{align*}
\int_{E_m(y)}
\frac{w(x)\sigma(B_\lambda(x))^{p-1}}
{|B_\lambda(x)|^p}\,dx
&\leq
C\frac{w(D_m)\sigma(D_m)^{p-1}}{|D_m|^p}\\
&\leq
C\left(1+\frac{6R_m}{\rho(x_m)}\right)^{p\theta}\\
&\leq C(1+\lambda)^{p\theta}.
\end{align*}
Summing over the at most $C(1+\log\lambda)$ nonempty sets gives
\[
J_\lambda(y)
\leq C(1+\lambda)^{p\theta}(1+\log\lambda).
\]
This proves \eqref{critical_average_norm} for $p>1$.\\
For $p=1$, Tonelli's theorem gives
\[
\|\mathcal A_\lambda f\|_{L^1(w)}
=
\int_{\mathbb R^n}|f(y)|
\int_{E_\lambda(y)}
\frac{w(x)}{|B_\lambda(x)|}\,dx\,dy.
\]
For each nonempty $E_m(y)$, the $A_1^{\rho,\theta}$
condition implies
\begin{align*}
\int_{E_m(y)}
\frac{w(x)}{|B_\lambda(x)|}\,dx
&\leq C\frac{w(D_m)}{|D_m|}\\
&\leq C(1+\lambda)^\theta
\operatorname*{ess\,inf}_{D_m}w\\
&\leq C(1+\lambda)^\theta w(y)
\end{align*}
for every Lebesgue point $y$ of $w$. In the last step,
we use $y\in D_m$. Summing over $m$ proves
\eqref{critical_average_norm} also for $p=1$.
\end{proof}

\begin{theorem}\label{main1}
Let $0\leq\theta<\beta_0$.
For $1<p<\infty$ and $w\in A_p^{\rho,\theta}$, the
semigroup maximal operator satisfies
\begin{equation}\label{semigroup_maximal_strong}
\|\mathcal T^*f\|_{L^p(w)}
\leq C\|f\|_{L^p(w)}.
\end{equation}
For $w\in A_1^{\rho,\theta}$, it satisfies
\begin{equation}\label{semigroup_maximal_weak}
w\bigl(\{x:\mathcal T^*f(x)>\lambda\}\bigr)
\leq \frac{C}{\lambda}\|f\|_{L^1(w)},
\qquad \lambda>0.
\end{equation}
The constants depend on the structural parameters, the
weight constant, and the exponents, but are independent
of $f$ and $\lambda$.
\end{theorem}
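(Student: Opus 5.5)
We split $\mathcal T^*$ at the critical scale, with $f\geq0$ and $w\in A_p^{\rho,\theta}$, $1\leq p<\infty$. By \eqref{kernelest}, for every $t>0$,
\[
e^{-tL}f(x)\leq P_t\bigl(f\chi_{B(x,\rho(x))}\bigr)(x)+\int_{|x-y|\geq\rho(x)}k_t^L(x,y)f(y)\,dy .
\]
Taking the supremum in $t$ gives
\[
\mathcal T^*f(x)\leq T^*_{\mathrm{loc}}f(x)+\int_{|x-y|\geq\rho(x)}\sup_{t>0}k_t^L(x,y)\,f(y)\,dy=:T^*_{\mathrm{loc}}f(x)+Gf(x).
\]
Since $A_p^{\rho,\theta}\subseteq A_p^{\rho,\mathrm{loc}}$, Theorem~\ref{localisation}(1) bounds $T^*_{\mathrm{loc}}$ on $L^p(w)$ for $p>1$, and from $L^1(w)$ to $L^{1,\infty}(w)$ for $p=1$. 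Everything therefore reduces to the global term $G$.

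For $G$ we use \eqref{maximal_far_bound_used} and decompose the region $|x-y|\geq\rho(x)$ into the dyadic annuli $2^{k}\rho(x)\leq|x-y|<2^{k+1}\rho(x)$, $k\geq0$. On the $k$-th annulus the kernel is at most $C\rho(x)^{\beta_0}(2^k\rho(x))^{-n-\beta_0}$, so
\[
Gf(x)\leq C\sum_{k\geq0}2^{-k\beta_0}\,\mathcal A_{2^{k+1}}f(x),
\]
with $\mathcal A_\lambda$ as in Lemma~\ref{dilated_critical_averages}. That lemma gives
\[
\|\mathcal A_{2^{k+1}}f\|_{L^p(w)}\leq C\,2^{k\theta}(1+k)^{1/p}\|f\|_{L^p(w)},
\]
so Minkowski's inequality yields
\[
\|Gf\|_{L^p(w)}\leq C\sum_k 2^{-k(\beta_0-\theta)}(1+k)^{1/p}\|f\|_{L^p(w)}.
\]
The series converges exactly because $\theta<\beta_0$. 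For $p>1$, combining this with the local bound gives \eqref{semigroup_maximal_strong}.

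At $p=1$ the same computation gives a strong $L^1(w)$ bound for $G$, since Lemma~\ref{dilated_critical_averages} covers $p=1$. By Chebyshev's inequality this implies the weak bound $w(\{Gf>\lambda/2\})\leq (C/\lambda)\|f\|_{L^1(w)}$. Adding the weak-type estimate for $T^*_{\mathrm{loc}}$ at level $\lambda/2$ yields \eqref{semigroup_maximal_weak}.

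The step needing the most care is measurability and the use of the supremum inside the integral. We will first treat $f\geq 0$, bounded and compactly supported, pass to the general case by monotone convergence and $|e^{-tL}f|\leq e^{-tL}|f|$, and note that the almost-everywhere kernel bound in Lemma~\ref{maximal_far_kernel} is uniform in $t$. The decay bookkeeping itself, where the factor $2^{k\theta}$ from the weight growth competes with the factor $2^{-k\beta_0}$ from the kernel, is routine once Lemma~\ref{dilated_critical_averages} is available.
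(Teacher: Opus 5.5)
Your proposal is correct and follows essentially the same route as the paper. You split at $B(x,\rho(x))$, bound the local part by free-kernel domination and Theorem~\ref{localisation}, and bound the global part by Lemma~\ref{maximal_far_kernel}, dyadic annuli, Lemma~\ref{dilated_critical_averages} and the condition $\theta<\beta_0$; at $p=1$ you combine the strong $L^1(w)$ global bound, via Chebyshev, with the local weak-type estimate.
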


\begin{proof}
Set $B_x=B(x,\rho(x))$ and define
\[
\mathcal T_{\mathrm{loc}}^*f(x)
=
\sup_{t>0}
\left|
\int_{B_x}k_t^L(x,y)f(y)\,dy
\right|
\]
and
\[
\mathcal T_{\mathrm{glob}}^*f(x)
=
\sup_{t>0}
\left|
\int_{\mathbb R^n\setminus B_x}
k_t^L(x,y)f(y)\,dy
\right|.
\]
Then
\[
\mathcal T^*f(x)
\leq
\mathcal T_{\mathrm{loc}}^*f(x)
+
\mathcal T_{\mathrm{glob}}^*f(x).
\]
By free-kernel domination \eqref{kernelest},
\[
\mathcal T_{\mathrm{loc}}^*f(x)
\leq
\sup_{t>0}
\int_{B_x}p_t(x-y)|f(y)|\,dy
=
T_{\mathrm{loc}}^*(|f|)(x).
\]
Since
\[
A_p^{\rho,\theta}
\subseteq A_p^{\rho,\mathrm{loc}},
\qquad 1\leq p<\infty,
\]
Theorem~\ref{localisation} gives the required strong-type
estimate for the local part when $p>1$ and its weak-type
$(1,1)$ estimate when $p=1$.

For the global part, positivity and
\eqref{maximal_far_bound_used} yield
\[
\mathcal T_{\mathrm{glob}}^*f(x)
\leq
C\rho(x)^{\beta_0}
\int_{|x-y|>\rho(x)}
\frac{|f(y)|}{|x-y|^{n+\beta_0}}\,dy.
\]
Decomposing the integration region into the annuli
\[
2^{j-1}\rho(x)<|x-y|\leq2^j\rho(x),
\qquad j\geq1,
\]
we obtain
\begin{align}
\mathcal T_{\mathrm{glob}}^*f(x)
&\leq
C\rho(x)^{-n}
\sum_{j=1}^\infty
2^{-j(n+\beta_0)}
\int_{B(x,2^j\rho(x))}|f(y)|\,dy
\nonumber\\
&\leq
C\sum_{j=1}^\infty
2^{-j\beta_0}\mathcal A_{2^j}f(x).
\label{global_maximal_average_bound}
\end{align}

Let $1\leq p<\infty$ and $w\in A_p^{\rho,\theta}$.
Minkowski's inequality and
Lemma~\ref{dilated_critical_averages} give
\begin{align*}
\|\mathcal T_{\mathrm{glob}}^*f\|_{L^p(w)}
&\leq
C\sum_{j=1}^\infty
2^{-j\beta_0}
\|\mathcal A_{2^j}f\|_{L^p(w)}\\
&\leq
C\|f\|_{L^p(w)}
\sum_{j=1}^\infty
(1+j)^{1/p}2^{-j(\beta_0-\theta)}\\
&\leq C\|f\|_{L^p(w)},
\end{align*}
because $\theta<\beta_0$.
These estimates may first be applied to bounded compactly
supported functions and then extended by truncation.

For $p>1$, combining the local and global estimates proves
\eqref{semigroup_maximal_strong}. At $p=1$, the local part
is of weak type $(1,1)$ and the global part is bounded on
$L^1(w)$. Therefore, for every $\lambda>0$,
\begin{align*}
w\bigl(\{\mathcal T^*f>\lambda\}\bigr)
&\leq
w\bigl(\{\mathcal T_{\mathrm{loc}}^*f>\lambda/2\}\bigr)
+
w\bigl(\{\mathcal T_{\mathrm{glob}}^*f>\lambda/2\}\bigr)\\
&\leq
\frac{C}{\lambda}\|f\|_{L^1(w)}
+
\frac{2}{\lambda}
\|\mathcal T_{\mathrm{glob}}^*f\|_{L^1(w)}\\
&\leq
\frac{C}{\lambda}\|f\|_{L^1(w)}.
\end{align*}
This proves \eqref{semigroup_maximal_weak}.
\end{proof}
\section{Weighted estimates for fractional integrals}\label{Fractional Integral}

We next study the smoothing properties of negative powers of $L$.
For $\mathcal I_\alpha=L^{-\alpha/2}$, the free counterpart is
$(-\Delta)^{-\alpha s/2}$, so the fractional integration order is
$\alpha s$. This determines the relation
$1/\nu=1/p-\alpha s/n$ between the source and target exponents.
We first integrate the heat-kernel estimate to obtain decay
involving the critical radius, and then combine this bound with
local fractional integration and weighted estimates for
fractional averages.

Initially, whenever the integral converges, define
\begin{equation}\label{fractional_integral_representation}
\mathcal I_\alpha f(x)
=
\frac1{\Gamma(\alpha/2)}
\int_0^\infty e^{-tL}f(x)\,
t^{\alpha/2}\,\frac{dt}{t},
\qquad 0<\alpha s<n.
\end{equation}
Throughout this section, write
\[
\gamma=\alpha s,
\qquad
\tau=1-\frac{\gamma}{n},
\qquad
\beta_0=s\min\{N_0,1\}>0,
\]
where $N_0$ is the fixed exponent in
Lemma~\ref{fractkernel2}. In particular, $0<\beta_0\leq s$.

\begin{lemma}[Fractional-kernel estimate]
\label{fractional_kernel_estimate}
Let
\begin{equation}\label{fractional_kernel_definition}
K_\alpha^L(x,y)
=
\frac1{\Gamma(\alpha/2)}
\int_0^\infty k_t^L(x,y)\,
t^{\alpha/2}\,\frac{dt}{t}.
\end{equation}
Then, for almost every $x\neq y$,
\begin{equation}\label{fractional_kernel_critical_decay}
0\leq K_\alpha^L(x,y)
\leq C|x-y|^{\gamma-n}
\left(
1+\frac{|x-y|}{\rho(x)}
+\frac{|x-y|}{\rho(y)}
\right)^{-\beta_0}.
\end{equation}
\end{lemma}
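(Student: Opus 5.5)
Set $r=|x-y|$, $\ell=t^{1/(2s)}$ (so $t=\ell^{2s}$ and $dt/t=2s\,d\ell/\ell$), and $\rho_*=\min\{\rho(x),\rho(y)\}$. The factor $1+r/\rho(x)+r/\rho(y)$ is comparable to $1+r/\rho_*$. So it suffices to prove two bounds. The first is $K_\alpha^L(x,y)\le Cr^{\gamma-n}$ for all $x\neq y$. The second is $K_\alpha^L(x,y)\le Cr^{\gamma-n}(\rho_*/r)^{\beta_0}$ when $r\ge\rho_*$. Nonnegativity is immediate from $k_t^L\ge0$.

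For the first bound I use free-kernel domination \eqref{free_domination_size}, exactly as in Lemma~\ref{fundamentalsoln}. I split at $\ell=r$. For $\ell\le r$ the integrand is at most $t\,r^{-n-2s}\,t^{\alpha/2-1}$. Integrating over $t\le r^{2s}$ gives $r^{-n-2s}r^{2s(1+\alpha/2)}=r^{\gamma-n}$, since $1+\alpha/2>0$. For $\ell\ge r$ the integrand is at most $t^{-n/(2s)+\alpha/2-1}$. This integrates to $r^{\gamma-n}$ because $\gamma<n$ makes $-n/(2s)+\alpha/2<0$.

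For the decay bound, assume without loss of generality that $\rho_*=\rho(x)\le r$; the case $\rho_*=\rho(y)$ is symmetric. Write $R=\rho_*$. Lemma~\ref{fractkernel2} with $N$ replaced by $N=\beta_0/s\le N_0$, together with the lower bound $1+\sqrt t/\rho(x)^s+\sqrt t/\rho(y)^s\ge1+(\ell/R)^s$, gives
\[
k_t^L(x,y)\le C\ell^{-n}\bigl(1+(\ell/R)^s\bigr)^{-\beta_0/s}(1+r/\ell)^{-n-s}.
\]
I then split the $\ell$-integral into three ranges.

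On $\ell\le R$ I use \eqref{free_domination_size}. The contribution is $\lesssim r^{-n-2s}R^{2s+\gamma}=r^{\gamma-n}(R/r)^{2s+\gamma}$, which is at most $r^{\gamma-n}(R/r)^{\beta_0}$ because $R\le r$ and $\beta_0\le s$.

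On $R<\ell\le r$ the kernel is $\lesssim R^{\beta_0}\ell^{-n-\beta_0}(\ell/r)^{n+s}=R^{\beta_0}r^{-n-s}\ell^{s-\beta_0}$. Integrating against $\ell^{\gamma}\,d\ell/\ell$ gives $R^{\beta_0}r^{-n-s}r^{s-\beta_0+\gamma}=r^{\gamma-n}(R/r)^{\beta_0}$. The exponent $s-\beta_0+\gamma$ is positive, so the integral is controlled by its upper endpoint.

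On $\ell\ge r$ the kernel is $\lesssim R^{\beta_0}\ell^{-n-\beta_0}$. Integrating against $\ell^{\gamma}\,d\ell/\ell$ converges because $\gamma<n<n+\beta_0$, and gives $R^{\beta_0}r^{\gamma-n-\beta_0}$.

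Summing the three ranges gives the claimed decay. The case $r<\rho_*$ is covered by the first bound.

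I expect the main obstacle to be the middle range. That is where the critical-radius decay must be combined with the weaker off-diagonal exponent $n+s$ rather than $n+2s$. The construction works only because $\beta_0\le s$, which keeps the exponent of $\ell$ from turning negative and producing a logarithm or a loss. The point to check is that the substitution $N=\beta_0/s\le N_0$ is legitimate, which is exactly the final assertion of Lemma~\ref{fractkernel2}.
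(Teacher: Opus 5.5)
Your proof is correct and follows the paper's route. You obtain the free bound $K_\alpha^L\le Cr^{\gamma-n}$ from free-kernel domination split at $t=r^{2s}$, and the critical-radius bound from Lemma~\ref{fractkernel2} with exponent $\beta_0/s\le N_0$, then combine them. The paper differs only in skipping your split at $\ell=R$: it uses $(1+(\ell/R)^s)^{-\beta_0/s}\le (R/\ell)^{\beta_0}$ for all $\ell>0$ and the substitution $\ell=ru$ to get $R^{\beta_0}r^{\gamma-n-\beta_0}\int_0^\infty u^{\gamma+s-\beta_0-1}(1+u)^{-n-s}\,du$ in one step, with no need to assume $R\le r$.
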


\begin{proof}
Set $r=|x-y|>0$ and $R=\rho(x)$.
Free-kernel domination and $0<\gamma<n$ give
\begin{align*}
K_\alpha^L(x,y)
&\leq
Cr^{-n-2s}\int_0^{r^{2s}}t^{\alpha/2}\,dt
+
C\int_{r^{2s}}^\infty
t^{(\gamma-n)/(2s)}\,\frac{dt}{t}\\
&\leq Cr^{\gamma-n}.
\end{align*}

To obtain critical-radius decay, apply
\eqref{heat_kernel_critical_bound} with exponent
$\beta_0/s\leq N_0$. Writing $\ell=t^{1/(2s)}$, we obtain
\begin{align*}
k_t^L(x,y)
&\leq
C\ell^{-n}
\left(
1+\left(\frac{\ell}{R}\right)^s
\right)^{-\beta_0/s}
\left(1+\frac r\ell\right)^{-n-s}\\
&\leq
CR^{\beta_0}\ell^{-n-\beta_0}
\left(1+\frac r\ell\right)^{-n-s}.
\end{align*}
Since $t^{\alpha/2}=\ell^\gamma$ and
$dt/t=2s\,d\ell/\ell$, the substitution $\ell=ru$ yields
\begin{align*}
K_\alpha^L(x,y)
&\leq
CR^{\beta_0}\int_0^\infty
\ell^{\gamma-n-\beta_0-1}
\left(1+\frac r\ell\right)^{-n-s}\,d\ell\\
&=
CR^{\beta_0}r^{\gamma-n-\beta_0}
\int_0^\infty
\frac{u^{\gamma+s-\beta_0-1}}
{(1+u)^{n+s}}\,du\\
&\leq
CR^{\beta_0}r^{\gamma-n-\beta_0}.
\end{align*}
The last integral is finite because
\[
\gamma+s-\beta_0>0,
\qquad
n-\gamma+\beta_0>0.
\]
By symmetry, the same bound holds with $\rho(y)$ in place
of $R$. Combining these two estimates with the free bound
gives
\[
K_\alpha^L(x,y)
\leq Cr^{\gamma-n}
\min\left\{
1,
\left(\frac{\rho(x)}r\right)^{\beta_0},
\left(\frac{\rho(y)}r\right)^{\beta_0}
\right\},
\]
which implies \eqref{fractional_kernel_critical_decay}.
\end{proof}

We now establish the fractional averaging estimate used to
control the contribution outside the critical ball.

\begin{lemma}[Fractional averages over dilated critical balls]
\label{fractional_critical_averages}
Let $0<\gamma<n$, $1\leq p<n/\gamma$, and
\[
\frac1\nu=\frac1p-\frac{\gamma}{n},
\qquad
\tau=1-\frac{\gamma}{n}.
\]
Let $\theta\geq0$ and suppose that
\[
v=w^{\nu/p}\in A_{1+\nu/p'}^{\rho,\theta},
\]
where $p'=\infty$ when $p=1$.
For $\lambda\geq1$, define
\[
\mathcal A_{\lambda,\gamma}f(x)
=
|B(x,\lambda\rho(x))|^{\gamma/n-1}
\int_{B(x,\lambda\rho(x))}|f(y)|\,dy.
\]
Then
\begin{equation}\label{fractional_average_norm}
\|\mathcal A_{\lambda,\gamma}f\|_{L^\nu(v)}
\leq C(1+\lambda)^{\theta\tau}
(1+\log\lambda)^{1/\nu}\|f\|_{L^p(w)},
\end{equation}
with $C$ independent of $\lambda$ and $f$.
\end{lemma}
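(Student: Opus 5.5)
The plan is to follow the proof of Lemma~\ref{dilated_critical_averages}. We reuse its geometric decomposition unchanged and replace the $L^p\to L^p$ bookkeeping with the off-diagonal exponents $p\to\nu$. Write $B_\lambda(x)=B(x,\lambda\rho(x))$ and $E_\lambda(y)=\{x:y\in B_\lambda(x)\}$, and take the slices $E_m(y)$, the points $x_m$, the radii $R_m=2^m\rho(y)$ and the balls $D_m=B(x_m,6R_m)$ exactly as there. As before, for fixed $y$ at most $C(1+\log\lambda)$ slices are nonempty. For $x\in E_m(y)$ we have $B_\lambda(x)\subseteq D_m$ and $|B_\lambda(x)|\geq 6^{-n}|D_m|$, and $6R_m/\rho(x_m)\leq 6\lambda$. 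Since $\gamma/n-1<0$, the last volume comparison gives $|B_\lambda(x)|^{(\gamma/n-1)\nu}\leq C|D_m|^{(\gamma/n-1)\nu}$.

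For $1<p<n/\gamma$, set $\sigma=w^{-1/(p-1)}$ and note that $v^{-p'/\nu}=\sigma$. Put $q=1+\nu/p'$. The $A_q^{\rho,\theta}$ condition for $v$, raised to the power $q$, reads
\[
v(B)\,\sigma(B)^{\nu/p'}\leq C|B|^{1+\nu/p'}\Bigl(1+\frac r{\rho(x)}\Bigr)^{\theta q},\qquad B=B(x,r).
\]
Hölder's inequality gives
\[
\mathcal A_{\lambda,\gamma}f(x)^\nu\leq |B_\lambda(x)|^{(\gamma/n-1)\nu}\sigma(B_\lambda(x))^{\nu/p'}\Bigl(\int_{B_\lambda(x)}|f|^pw\Bigr)^{\nu/p}.
\]
Because $\nu/p\geq1$, Tonelli cannot be applied directly. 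Instead, split
\[
\Bigl(\int_{B_\lambda(x)}|f|^pw\Bigr)^{\nu/p}\leq \Bigl(\int_{B_\lambda(x)}|f|^pw\Bigr)\|f\|_{L^p(w)}^{\nu-p}.
\]
Integrating against $v(x)\,dx$ and applying Tonelli then gives
\[
\|\mathcal A_{\lambda,\gamma}f\|_{L^\nu(v)}^\nu\leq \|f\|_{L^p(w)}^{\nu-p}\int|f(y)|^pw(y)J(y)\,dy,
\]
where
\[
J(y)=\int_{E_\lambda(y)}v(x)\,|B_\lambda(x)|^{(\gamma/n-1)\nu}\,\sigma(B_\lambda(x))^{\nu/p'}\,dx.
\]

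To bound $J(y)$, estimate each slice $E_m(y)$ using $E_m(y)\subseteq D_m$ (since $y\in B_\lambda(x)$ forces $|x-x_m|<4R_m$), monotonicity of $\sigma$, and the volume comparison above. The contribution of the slice is at most
\[
C\,v(D_m)\,\sigma(D_m)^{\nu/p'}\,|D_m|^{(\gamma/n-1)\nu}\leq C|D_m|^{1+\nu/p'+(\gamma/n-1)\nu}(1+\lambda)^{\theta q}.
\]
The key arithmetic is that the volume exponent vanishes:
\[
1+\nu\Bigl(1-\frac1p\Bigr)-\nu+\nu\Bigl(\frac1p-\frac1\nu\Bigr)=0 .
\]
Summing over the slices gives $J(y)\leq C(1+\lambda)^{\theta q}(1+\log\lambda)$. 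Hence
\[
\|\mathcal A_{\lambda,\gamma}f\|_{L^\nu(v)}\leq C(1+\lambda)^{\theta q/\nu}(1+\log\lambda)^{1/\nu}\|f\|_{L^p(w)},
\]
and the exponent is as claimed because
\[
\frac q\nu=\frac1\nu+\frac1{p'}=1-\frac\gamma n=\tau .
\]

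For $p=1$, we have $\nu=n/(n-\gamma)$ and $\tau=1/\nu$. Write $v=w^\nu\in A_1^{\rho,\theta}$. Here we bound
\[
\int_{B_\lambda(x)}|f|\leq \bigl(\operatorname*{ess\,sup}_{B_\lambda(x)}w^{-1}\bigr)\int_{B_\lambda(x)}|f|w,
\]
and note that $(\operatorname*{ess\,sup}_B w^{-1})^\nu=\operatorname*{ess\,sup}_B v^{-1}$. We then apply the same splitting trick with $\|f\|_{L^1(w)}^{\nu-1}$ and Tonelli. Each slice contributes at most
\[
C\,v(D_m)\,|D_m|^{(\gamma/n-1)\nu}\operatorname*{ess\,sup}_{D_m}v^{-1}\leq C(1+\lambda)^\theta,
\]
since $(\gamma/n-1)\nu=-1$. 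This yields the factor $(1+\lambda)^{\theta/\nu}=(1+\lambda)^{\theta\tau}$.

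The main obstacle is the exponent bookkeeping: the power of $|D_m|$ has to cancel exactly and the weight-growth factor has to come out as $\theta\tau$. The only genuinely new device relative to Lemma~\ref{dilated_critical_averages} is peeling off $\|f\|^{\nu-p}$, which reduces the non-linear power $\nu/p\geq1$ to a linear expression to which Tonelli's theorem applies.
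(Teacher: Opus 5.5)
Your proposal is correct and follows the paper's proof essentially step for step: the same slices $E_m(y)$, balls $D_m=B(x_m,6R_m)$, Hölder step, slice-wise application of the $A_{1+\nu/p'}^{\rho,\theta}$ (resp.\ $A_1^{\rho,\theta}$) condition, and the same cancellation of the volume exponent, with $q/\nu=\tau$. The only differences are minor. You linearize the power $\nu/p$ by peeling off $\|f\|_{L^p(w)}^{\nu-p}$, and at $p=1$ you use the $\operatorname*{ess\,sup}w^{-1}$ form of Hölder; the paper instead uses Minkowski's integral inequality, and at $p=1$ a Lebesgue-point bound $\operatorname*{ess\,inf}_{D_m}v\leq v(y)$. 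Both routes give the same uniform bound on $J_\lambda(y)$ and the same final estimate.
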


\begin{proof}
Write $B_\lambda(x)=B(x,\lambda\rho(x))$ and
\[
E_\lambda(y)=\{x:y\in B_\lambda(x)\}.
\]
We use the geometric decomposition from the proof of
Lemma~\ref{dilated_critical_averages}:
\[
E_m(y)=
\left\{
x\in E_\lambda(y):
2^m\rho(y)\leq\lambda\rho(x)<2^{m+1}\rho(y)
\right\}.
\]
For each fixed $y$, there are at most
$C(1+\log\lambda)$ nonempty sets. Choose
$x_m\in E_m(y)$, put $R_m=2^m\rho(y)$, and let
$D_m=B(x_m,6R_m)$.
Then, for $x\in E_m(y)$,
\[
B_\lambda(x)\subseteq D_m,
\qquad
|B_\lambda(x)|\geq6^{-n}|D_m|,
\qquad
\frac{6R_m}{\rho(x_m)}\leq6\lambda.
\]
Suppose first that $p>1$, and set
\[
b=1+\frac{\nu}{p'}=\nu\tau,
\qquad
\sigma=w^{-1/(p-1)}=v^{-1/(b-1)}.
\]
H\"older's inequality gives
\[
|\mathcal A_{\lambda,\gamma}f(x)|^p
\leq
\frac{\sigma(B_\lambda(x))^{p-1}}
{|B_\lambda(x)|^{p\tau}}
\int_{B_\lambda(x)}|f(y)|^p w(y)\,dy.
\]
Since $\nu/p>1$, Minkowski's integral inequality yields
\begin{equation}\label{fractional_average_minkowski}
\|\mathcal A_{\lambda,\gamma}f\|_{L^\nu(v)}^p
\leq
\int_{\mathbb R^n}|f(y)|^p w(y)
J_\lambda(y)^{p/\nu}\,dy,
\end{equation}
where
\[
J_\lambda(y)
=
\int_{E_\lambda(y)}
\frac{v(x)\sigma(B_\lambda(x))^{\nu/p'}}
{|B_\lambda(x)|^{\nu\tau}}\,dx.
\]
Because $\nu\tau=b$ and $\nu/p'=b-1$, the
$A_b^{\rho,\theta}$ condition gives
\begin{align*}
\int_{E_m(y)}
\frac{v(x)\sigma(B_\lambda(x))^{\nu/p'}}
{|B_\lambda(x)|^{\nu\tau}}\,dx
&\leq
C\frac{v(D_m)\sigma(D_m)^{b-1}}{|D_m|^b}\\
&\leq
C\left(
1+\frac{6R_m}{\rho(x_m)}
\right)^{b\theta}\\
&\leq C(1+\lambda)^{b\theta}.
\end{align*}
Hence
\[
J_\lambda(y)
\leq C(1+\lambda)^{b\theta}(1+\log\lambda).
\]
Substituting this bound into
\eqref{fractional_average_minkowski} and using
$b/\nu=\tau$ proves \eqref{fractional_average_norm}
for $p>1$.

For $p=1$, we have $\nu=1/\tau$ and
$v=w^\nu\in A_1^{\rho,\theta}$.
Minkowski's integral inequality gives
\[
\|\mathcal A_{\lambda,\gamma}f\|_{L^\nu(v)}
\leq
\int_{\mathbb R^n}|f(y)|
\left(
\int_{E_\lambda(y)}
\frac{v(x)}{|B_\lambda(x)|}\,dx
\right)^{1/\nu}\,dy.
\]
For each nonempty $E_m(y)$,
\begin{align*}
\int_{E_m(y)}
\frac{v(x)}{|B_\lambda(x)|}\,dx
&\leq C\frac{v(D_m)}{|D_m|}\\
&\leq C(1+\lambda)^\theta
\operatorname*{ess\,inf}_{D_m}v\\
&\leq C(1+\lambda)^\theta v(y)
\end{align*}
for every Lebesgue point $y$ of $v$.
Summing over $m$, taking the $1/\nu$ power, and using
$v^{1/\nu}=w$ and $1/\nu=\tau$ proves the endpoint case.
\end{proof}

\begin{theorem}\label{main2}
Let $0<\alpha s<n$, and assume that
\begin{equation}\label{fractional_weight_range}
0\leq\theta<
\frac{\beta_0}{1-\alpha s/n}.
\end{equation}
If
\[
1<p<\frac{n}{\alpha s},
\qquad
\frac1\nu=\frac1p-\frac{\alpha s}{n},
\qquad
w^{\nu/p}\in A_{1+\nu/p'}^{\rho,\theta},
\]
then
\begin{equation}\label{fractional_integral_strong}
\|\mathcal I_\alpha f\|_{L^\nu(w^{\nu/p})}
\leq C\|f\|_{L^p(w)}.
\end{equation}

At the endpoint, let $\nu_1=n/(n-\alpha s)$.
Under the same restriction
\eqref{fractional_weight_range}, if
$w^{\nu_1}\in A_1^{\rho,\theta}$, then
\begin{equation}\label{fractional_integral_weak}
\|\mathcal I_\alpha f\|_{L^{\nu_1,\infty}(w^{\nu_1})}
\leq C\|f\|_{L^1(w)}.
\end{equation}
The constants may depend on the structural parameters,
the exponents, and the weight constant, but not on $f$.
\end{theorem}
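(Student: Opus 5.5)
We follow the pattern of the proof of Theorem~\ref{main1}, splitting $\mathcal I_\alpha$ at the critical scale. With $B_x=B(x,\rho(x))$, positivity of $K_\alpha^L$ gives
\[
|\mathcal I_\alpha f(x)|\leq \int_{B_x}K_\alpha^L(x,y)|f(y)|\,dy+\int_{\mathbb R^n\setminus B_x}K_\alpha^L(x,y)|f(y)|\,dy
=:\mathcal I_{\mathrm{loc}}|f|(x)+\mathcal I_{\mathrm{glob}}|f|(x).
\]
The representation \eqref{fractional_integral_representation} together with Tonelli's theorem justifies writing $\mathcal I_\alpha f=\int K_\alpha^L(\cdot,y)f(y)\,dy$ for bounded compactly supported $f$. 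We prove the estimates for such $f$ first and then extend by monotone truncation.

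\textbf{Local part.} Lemma~\ref{fractional_kernel_estimate} (its first, free bound) gives $K_\alpha^L(x,y)\leq C|x-y|^{\gamma-n}$. Hence $\mathcal I_{\mathrm{loc}}|f|\leq C(I_\alpha)_{\mathrm{loc}}|f|$, where $I_\alpha$ is the Riesz potential of order $\gamma=\alpha s$. Since $A_{b}^{\rho,\theta}\subseteq A_b^{\rho,\mathrm{loc}}$, the hypothesis $w^{\nu/p}\in A_{1+\nu/p'}^{\rho,\theta}$ puts the weight in the local class. Theorem~\ref{localisation}(2) then gives the $L^p(w)\to L^\nu(w^{\nu/p})$ bound. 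At the endpoint, $w^{\nu_1}\in A_1^{\rho,\theta}\subseteq A_1^{\rho,\mathrm{loc}}$, and Theorem~\ref{localisation}(3) gives the weak-type $(1,\nu_1)$ bound.

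\textbf{Global part.} For $|x-y|\geq\rho(x)$, \eqref{fractional_kernel_critical_decay} gives
\[
K_\alpha^L(x,y)\leq C\rho(x)^{\beta_0}|x-y|^{\gamma-n-\beta_0}.
\]
We decompose into the annuli $2^{j-1}\rho(x)<|x-y|\leq 2^j\rho(x)$, $j\geq1$. On the $j$-th annulus the kernel is at most $C2^{-j\beta_0}(2^j\rho(x))^{\gamma-n}$, and $(2^j\rho(x))^{\gamma-n}\sim|B(x,2^j\rho(x))|^{\gamma/n-1}$. Therefore
\[
\mathcal I_{\mathrm{glob}}|f|(x)\leq C\sum_{j\geq1}2^{-j\beta_0}\mathcal A_{2^j,\gamma}f(x).
\]
By Minkowski's inequality in $L^\nu(v)$ with $v=w^{\nu/p}$ (this is a norm since $\nu>1$, including $\nu=\nu_1$ at $p=1$), and by Lemma~\ref{fractional_critical_averages}, we get
\[
\|\mathcal I_{\mathrm{glob}}|f|\|_{L^\nu(v)}\leq C\|f\|_{L^p(w)}\sum_{j\geq1}(1+j)^{1/\nu}2^{-j(\beta_0-\theta\tau)}.
\]
This series converges exactly when $\theta\tau<\beta_0$, which is the assumption \eqref{fractional_weight_range}. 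So the global part is strongly bounded from $L^p(w)$ to $L^\nu(w^{\nu/p})$ for all $1\leq p<n/\gamma$.

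\textbf{Combining.} For $p>1$ we add the two strong bounds. At $p=1$ the local part is of weak type. The global part is strongly bounded into $L^{\nu_1}(w^{\nu_1})$, so Chebyshev's inequality gives $w^{\nu_1}(\{\mathcal I_{\mathrm{glob}}>\lambda/2\})\leq (2C\|f\|_{L^1(w)}/\lambda)^{\nu_1}$. Splitting the level set $\{\,|\mathcal I_\alpha f|>\lambda\}$ into the local and global pieces then proves \eqref{fractional_integral_weak}.

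\textbf{Expected obstacle.} The main point requiring care is the kernel-to-average reduction. One must check that the constants relating $|x-y|^{\gamma-n-\beta_0}\rho(x)^{\beta_0}$ to $2^{-j\beta_0}|B(x,2^j\rho(x))|^{\gamma/n-1}$ are uniform in $j$; this holds because $\gamma-n<0$ and the annulus radii are comparable to $2^j\rho(x)$. One must also confirm that the endpoint case of Lemma~\ref{fractional_critical_averages} indeed delivers a strong $L^1(w)\to L^{\nu_1}(w^{\nu_1})$ bound, which is what the Minkowski-based proof of that lemma gives. Everything else is direct bookkeeping, parallel to the proof of Theorem~\ref{main1}.
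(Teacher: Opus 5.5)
Your proposal is correct and follows essentially the same route as the paper. You split at $B(x,\rho(x))$, dominate the local part by the localized Riesz potential and invoke Theorem~\ref{localisation}, and bound the global part by $\sum_j 2^{-j\beta_0}\mathcal A_{2^j,\gamma}f$ via Lemma~\ref{fractional_kernel_estimate}, summing with Lemma~\ref{fractional_critical_averages} under $\theta\tau<\beta_0$ and using Chebyshev at the endpoint; the only cosmetic difference is that you take the local domination from the free bound in Lemma~\ref{fractional_kernel_estimate} rather than directly from semigroup domination, which amounts to the same thing.
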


\begin{proof}
Set $B_x=B(x,\rho(x))$ and write
\[
\mathcal I_\alpha f(x)
=
(\mathcal I_\alpha)_{\mathrm{loc}}f(x)
+
(\mathcal I_\alpha)_{\mathrm{glob}}f(x),
\]
where
\[
(\mathcal I_\alpha)_{\mathrm{loc}}f(x)
=
\int_{B_x}K_\alpha^L(x,y)f(y)\,dy
\]
and
\[
(\mathcal I_\alpha)_{\mathrm{glob}}f(x)
=
\int_{\mathbb R^n\setminus B_x}
K_\alpha^L(x,y)f(y)\,dy.
\]
Free-kernel domination and the semigroup representations imply
\[
|(\mathcal I_\alpha)_{\mathrm{loc}}f(x)|
\leq (I_\alpha)_{\mathrm{loc}}(|f|)(x).
\]
Since
\[
A_b^{\rho,\theta}
\subseteq A_b^{\rho,\mathrm{loc}},
\qquad 1\leq b<\infty,
\]
Theorem~\ref{localisation} gives the required local
strong-type estimate when $p>1$ and local weak-type
estimate when $p=1$.

For the global part,
Lemma~\ref{fractional_kernel_estimate} gives
\[
|(\mathcal I_\alpha)_{\mathrm{glob}}f(x)|
\leq
C\rho(x)^{\beta_0}
\int_{|x-y|>\rho(x)}
\frac{|f(y)|}{|x-y|^{n-\gamma+\beta_0}}\,dy.
\]
Decomposing into the annuli
\[
2^{j-1}\rho(x)<|x-y|\leq2^j\rho(x),
\qquad j\geq1,
\]
we obtain
\begin{align}
|(\mathcal I_\alpha)_{\mathrm{glob}}f(x)|
&\leq
C\rho(x)^{\gamma-n}
\sum_{j=1}^\infty
2^{-j(n-\gamma+\beta_0)}
\int_{B(x,2^j\rho(x))}|f(y)|\,dy
\nonumber\\
&\leq
C\sum_{j=1}^\infty
2^{-j\beta_0}\mathcal A_{2^j,\gamma}f(x).
\label{fractional_global_average_bound}
\end{align}
Let $1\leq p<n/\gamma$,
$1/\nu=1/p-\gamma/n$, and
\[
v=w^{\nu/p}\in A_{1+\nu/p'}^{\rho,\theta},
\]
with $p'=\infty$ at $p=1$.
Minkowski's inequality and
Lemma~\ref{fractional_critical_averages} yield
\begin{align*}
\|(\mathcal I_\alpha)_{\mathrm{glob}}f\|_{L^\nu(v)}
&\leq
C\sum_{j=1}^\infty
2^{-j\beta_0}
\|\mathcal A_{2^j,\gamma}f\|_{L^\nu(v)}\\
&\leq
C\|f\|_{L^p(w)}
\sum_{j=1}^\infty
(1+j)^{1/\nu}
2^{-j(\beta_0-\theta\tau)}\\
&\leq C\|f\|_{L^p(w)}.
\end{align*}
The series converges because
$\beta_0-\theta\tau>0$ by
\eqref{fractional_weight_range}.
In particular, this proves the strong endpoint estimate
\[
\|(\mathcal I_\alpha)_{\mathrm{glob}}f\|
_{L^{\nu_1}(w^{\nu_1})}
\leq C\|f\|_{L^1(w)}.
\]
For $p>1$, combining the local and global bounds proves
\eqref{fractional_integral_strong}.
At $p=1$, put $v=w^{\nu_1}$.
The local weak-type estimate and Chebyshev's inequality
for the global part give, for every $\lambda>0$,
\begin{align*}
v\bigl(\{|\mathcal I_\alpha f|>\lambda\}\bigr)
&\leq
v\bigl(
\{|(\mathcal I_\alpha)_{\mathrm{loc}}f|>\lambda/2\}
\bigr)+
v\bigl(
\{|(\mathcal I_\alpha)_{\mathrm{glob}}f|>\lambda/2\}
\bigr)\\
&\leq
\left(\frac{C}{\lambda}\right)^{\nu_1}
\|f\|_{L^1(w)}^{\nu_1}.
\end{align*}
This gives \eqref{fractional_integral_weak}.
The estimates extend from bounded compactly supported
functions by truncation and positivity of the kernels.
\end{proof}

\section{Sobolev estimates for spectral fractional Riesz transforms}\label{Riesz transform}

We conclude by studying spatial regularity for the spectral
fractional operator
\[
\mathcal{Q}=(-\Delta+V)^s,
\qquad 0<s<1.
\]
In this section, the relevant critical radius is the classical
critical radius, which we denote simply by $\rho$. We consider
the operators
\[
\mathcal{R}
=
\nabla\mathcal{Q}^{-1/2}
=
\nabla(-\Delta+V)^{-s/2},
\]
and
\[
\mathcal{R}^{*}
=
\mathcal{Q}^{-1/2}\nabla^{*}
=
-(-\Delta+V)^{-s/2}\operatorname{div},
\]
where the adjoint acts on vector fields.

The free counterpart
\[
\nabla(-\Delta)^{-s/2}
\]
has Fourier multiplier $i\xi|\xi|^{-s}$, which is homogeneous
of degree $1-s$. This gives Sobolev estimates of the form
\[
\|\mathcal{R}f\|_{L^p}
\lesssim
\|f\|_{H^{1-s,p}},
\]
together with the corresponding dual estimates for
$\mathcal{R}^{*}$.

We begin with the resolvent representation. Let
\[
A=-\Delta+V.
\]
For $0<s<1$, by functional calculus we have
\begin{equation}\label{Balakrishnan}
A^{-s/2}
=
c_s
\int_0^\infty
\lambda^{-s/2}(A+\lambda)^{-1}\,d\lambda,
\qquad
c_s
=
\frac{\sin(\pi s/2)}{\pi}.
\end{equation}
Consequently,
\begin{align}
\nabla(-\Delta+V)^{-s/2}
&=
c_s
\int_0^\infty
\lambda^{-s/2}
\nabla(-\Delta+V+\lambda)^{-1}\,d\lambda.
\label{resolvent-representation}
\end{align}
Let $\Gamma(x,y,\lambda)$ denote the fundamental solution of
\(
-\Delta+V+\lambda,
\lambda>0
\).
Then
\[
\nabla(-\Delta+V)^{-s/2}f(x)
=
\int_{\mathbb{R}^n}K(x,y)f(y)\,dy,
\]
where
\begin{equation}\label{kernel}
K(x,y)
=
c_s
\int_0^\infty
\lambda^{-s/2}
\nabla_x\Gamma(x,y,\lambda)\,d\lambda.
\end{equation}
We shall work with the adjoint kernel operator
\[
Tf(x)
=
\int_{\mathbb{R}^n}K(y,x)f(y)\,dy.
\]

Since $V\in RH_{n/2}$ implies, by the self-improving property
of reverse H\"older classes, that $V\in RH_q$ for some
$q>n/2$, we shall use the following standard resolvent
estimate; see \cite{MR2824565}.

\begin{theorem}\label{resolvent-kernel-bound}
Suppose $V\in RH_{n/2}$. Then, for every $N>0$, there exists
$C_N>0$ such that
\begin{equation}\label{Gamma-bound}
|\Gamma(x,y,\lambda)|
\leq
\frac{C_N}
{\left(1+\lambda^{1/2}|x-y|\right)^N
 \left(1+m(x,V)|x-y|\right)^N}
\frac{1}{|x-y|^{n-2}},
\end{equation}
for all $x,y\in\mathbb{R}^n$ and $\lambda>0$.
\end{theorem}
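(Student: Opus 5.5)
The bound in Theorem~\ref{resolvent-kernel-bound} has two parts: the polynomial decay in $\lambda^{1/2}|x-y|$, which comes from the spectral shift by $\lambda$, and the polynomial decay in $m(x,V)|x-y|$, which comes from the potential. I would prove it by combining them, obtaining each factor separately and then multiplying.

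\textbf{Step 1: pointwise domination.} Since $V\geq0$, the resolvent is positivity preserving, so $0\leq\Gamma(x,y,\lambda)\leq\Gamma_0(x,y)$. Here $\Gamma_0$ is the fundamental solution of $-\Delta+V$, realized as $\int_0^\infty e^{-\lambda t}k_t^P(x,y)\,dt$ with $k_t^P\leq(4\pi t)^{-n/2}e^{-|x-y|^2/(4t)}$. Shen's estimate \cite{MR1343560}, valid for $V\in RH_q$ with $q\geq n/2$ (available after self-improvement), gives
\[
\Gamma_0(x,y)\leq C_N\bigl(1+m(x,V)|x-y|\bigr)^{-N}|x-y|^{2-n}.
\]
This supplies the potential factor.

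\textbf{Step 2: the $\lambda$-decay.} Write $\Gamma(x,y,\lambda)=\int_0^\infty e^{-\lambda t}k_t^P(x,y)\,dt$ and use the Gaussian bound. Split the $t$-integral at $t=|x-y|^2$.
\begin{itemize}
\item For $t\leq|x-y|^2$, the Gaussian factor gives arbitrary decay in $|x-y|^2/t$. Combined with $e^{-\lambda t}$, this yields $C_N(1+\lambda^{1/2}|x-y|)^{-N}|x-y|^{2-n}$, because $e^{-\lambda t-|x-y|^2/(8t)}\leq e^{-c\lambda^{1/2}|x-y|}$.
\item For $t>|x-y|^2$, the factor $e^{-\lambda t}\leq e^{-\lambda|x-y|^2}$ gives the same decay directly.
\end{itemize}
This yields $\Gamma\leq C_N(1+\lambda^{1/2}|x-y|)^{-N}|x-y|^{2-n}$.

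\textbf{Step 3: combining.} Take the geometric mean of the bounds from Steps 1 and 2, with exponent $2N$ in each. This gives
\[
\Gamma\leq C_N\bigl(1+\lambda^{1/2}|x-y|\bigr)^{-N}\bigl(1+m(x,V)|x-y|\bigr)^{-N}|x-y|^{2-n},
\]
which is \eqref{Gamma-bound}.

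\textbf{Main obstacle.} The real content lies in Step 1: Shen's decay bound for the zero-energy fundamental solution. Proving it from scratch requires Fefferman--Phong type inequalities, $\int m(\cdot,V)^2|u|^2\lesssim\int|\nabla u|^2+V|u|^2$, which is the classical analogue of Lemma~\ref{critical_radius_form_bound}, together with a Caccioppoli/iteration argument on balls of radius comparable to $m(x,V)^{-1}$. To obtain arbitrary $N$, I would iterate: on an annulus at scale $R\gg m(x,V)^{-1}$, the form inequality and Lemma~\ref{fractauxiliary}-type comparison for $m(\cdot,V)$ show that a subsolution gains a fixed factor $<1$ per dyadic step. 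Iterating $k$ steps gives decay $(1+m|x-y|)^{-N}$ for every $N$, and in fact $e^{-c(1+m|x-y|)^{\epsilon}}$. In the paper's context I would simply cite \cite{MR1343560,MR2824565} for this step.
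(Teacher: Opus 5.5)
Your proof is correct, but it does not follow the paper, which gives no argument of its own. The paper quotes the full two-parameter bound as a standard resolvent estimate from \cite{MR2824565}. In the literature (Shen \cite{MR1343560}) it is proved for complex shifts $-\Delta+V+i\tau$, using Caccioppoli and Fefferman--Phong estimates on the shifted equation, so both decay factors come out of a single argument.

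You instead use that $\lambda>0$ is real and $V\geq0$. The Laplace-transform representation, with a nonnegative heat kernel dominated by the Gaussian, gives
\[
0\leq\Gamma(x,y,\lambda)\leq\min\bigl\{\Gamma(x,y,0),\ \Gamma_0(x,y,\lambda)\bigr\}.
\]
Here $\Gamma_0(x,y,\lambda)$ is the paper's free kernel of $(-\Delta+\lambda)^{-1}$, so your Step~2 is a proof of \eqref{free-resolvent}. Taking the geometric mean of the two bounds with exponent $2N$ then yields \eqref{Gamma-bound}.

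What your route buys is a clean separation. The $\lambda$-factor costs nothing, and the only deep input is the zero-energy decay of $\Gamma(x,y,0)$, which you, like the paper, must cite. What it gives up is generality: it rests on positivity, so it does not cover complex spectral parameters. That is irrelevant here, since \eqref{Balakrishnan} only uses real $\lambda>0$.

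Two small points. First, you write $\Gamma_0$ for the zero-energy Green function of $-\Delta+V$, but the paper reserves $\Gamma_0$ for the free resolvent, so you should rename it. Second, in your sketch of the zero-energy estimate, a fixed contraction factor per dyadic annulus yields only one fixed power of $(1+m(x,V)|x-y|)^{-1}$, not every $N$. Arbitrary $N$ needs one of two things. One option is a per-step gain that is itself a positive power of $(1+Rm(x_0,V))^{-1}$, iterated $k$ times on nested balls between radii $R$ and $2R$, as in Shen's argument. The other is steps at the critical scale, whose number grows like a power of $m(x,V)|x-y|$. Since you intend to cite that step, this does not affect the validity of your proof.
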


Here $m(x,V)$ is defined through the critical radius
\begin{equation}\label{criticalrd}
\rho_{1}(x)
=
\frac{1}{m(x,V)}
=
\sup\left\{
r>0:
\frac{1}{r^{n-2}}
\int_{B(x,r)}V(y)\,dy
\leq1
\right\}.
\end{equation}

We recall two properties of reverse H\"older potentials that
will be used repeatedly.

\begin{lemma}\label{reverseholder2}
{\rm \cite{MR1343560}}
There exist constants $C>0$ and $k_0>0$ such that, whenever
$Rm(x,V)\geq1$, we have
\[
\frac{1}{R^{n-2}}
\int_{B(x,R)}V(y)\,dy
\leq
C\{Rm(x,V)\}^{k_0}.
\]
\end{lemma}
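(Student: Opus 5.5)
The plan is to prove this directly from the definition \eqref{criticalrd} of $\rho_1$ together with the doubling property of the measure $V(y)\,dy$. That property holds for $V\in RH_{q_1}$ because reverse H\"older weights lie in $A_\infty$, exactly as recalled in Section~\ref{prelim} for the fractional setting. So there is a constant $c_1\geq1$ with
\[
\int_{B(x,2r)}V\leq c_1\int_{B(x,r)}V
\]
for all $x$ and $r>0$. We will take $k_0=\log_2 c_1$; if $c_1=1$, replace it by $2$ so that $k_0>0$.

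\emph{Step 1: mass on the critical ball.} Set $r_0=\rho_1(x)=1/m(x,V)$, which lies in $(0,\infty)$ since $V\not\equiv0$ and $V\in RH_{q_1}$ with $q_1\geq n/2$. We show
\[
\int_{B(x,r_0)}V\leq r_0^{\,n-2}.
\]
For every $r<r_0$ there is $r'\in(r,r_0]$ with $r'^{\,2-n}\int_{B(x,r')}V\leq1$, since $r_0$ is a supremum. Hence $\int_{B(x,r)}V\leq r'^{\,n-2}\leq r_0^{\,n-2}$. Letting $r\uparrow r_0$ and using monotone convergence over the increasing open balls gives the claim.

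\emph{Step 2: doubling up to scale $R$.} Assume $R\,m(x,V)\geq1$, i.e.\ $R\geq r_0$. Choose the integer $j\geq0$ with $2^{j-1}r_0<R\leq 2^j r_0$; for $R=r_0$ take $j=0$. Iterating doubling $j$ times and then using Step~1 gives
\[
\int_{B(x,R)}V\leq\int_{B(x,2^jr_0)}V\leq c_1^{\,j}\int_{B(x,r_0)}V\leq c_1^{\,j}\,r_0^{\,n-2}.
\]
Dividing by $R^{n-2}$ and using $r_0\leq R$ and $n\geq3$, we get
\[
\frac1{R^{n-2}}\int_{B(x,R)}V\leq c_1^{\,j}\Bigl(\frac{r_0}{R}\Bigr)^{n-2}\leq c_1^{\,j}.
\]
Finally, $2^{j-1}<R/r_0$ gives
\[
c_1^{\,j}=c_1\cdot 2^{(j-1)\log_2 c_1}\leq c_1\,(R\,m(x,V))^{k_0},
\]
which is the assertion with $C=c_1$.

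\emph{Main obstacle.} The only nonroutine input is the doubling of $V\,dy$ with a constant depending only on $n$, $q_1$ and the $RH_{q_1}$ constant. This is standard: $RH_{q_1}\subset A_\infty$, and $A_\infty$ weights are doubling. So the main care needed is the limiting argument in Step~1, which guarantees the critical ball itself carries mass at most $r_0^{\,n-2}$ even if the supremum in \eqref{criticalrd} is not attained.
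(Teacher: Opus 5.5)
Your argument is correct and is essentially the standard proof from Shen's paper, which this paper cites without giving a proof of its own: you normalize the mass of $V$ at the critical radius $\rho_1(x)$ and then iterate the doubling property of $V(y)\,dy$ out to scale $R$, so that $k_0$ is determined by the doubling constant. The limiting argument in your Step~1 is an unneeded precaution, because $r\mapsto\int_{B(x,r)}V$ is continuous and so in fact $\int_{B(x,\rho_1(x))}V=\rho_1(x)^{n-2}$.
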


\begin{lemma}\label{reverseholder3}
{\rm \cite{MR1343560}}
Suppose $V\in RH_q$ with $q>n/2$. Then there exists $C>0$
such that, for $0<r<R<\infty$,
\[
\frac{1}{r^{n-2}}
\int_{B(x,r)}V(y)\,dy
\leq
C
\left(\frac{R}{r}\right)^{\frac nq-2}
\frac{1}{R^{n-2}}
\int_{B(x,R)}V(y)\,dy.
\]
\end{lemma}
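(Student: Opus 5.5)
The plan is to prove Lemma~\ref{reverseholder3} directly from the reverse H\"older inequality \eqref{reverse1} on the larger ball $B(x,R)$, combined with H\"older's inequality on the smaller ball $B(x,r)\subset B(x,R)$. No properties of the critical radius are needed; only $V\geq0$ and $V\in RH_q$ enter. The restriction $q>n/2$ only guarantees that the exponent $n/q-2$ is negative, so the factor $(R/r)^{n/q-2}$ is a genuine gain. The inequality itself holds for any $1<q<\infty$ by the same computation.

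First, apply H\"older's inequality with exponents $q$ and $q'$ on $B(x,r)$. Then enlarge the domain of the $L^q$ norm to $B(x,R)$, which is allowed since $V\geq0$:
\[
\int_{B(x,r)}V
\leq |B(x,r)|^{1/q'}\left(\int_{B(x,r)}V^q\right)^{1/q}
\leq |B(x,r)|^{1/q'}\left(\int_{B(x,R)}V^q\right)^{1/q}.
\]
Second, invoke \eqref{reverse1} on $B(x,R)$ in the form
\[
\left(\int_{B(x,R)}V^q\right)^{1/q}\leq C|B(x,R)|^{1/q-1}\int_{B(x,R)}V .
\]
Combining the two displays gives
\[
\int_{B(x,r)}V\leq C\left(\frac rR\right)^{n/q'}\int_{B(x,R)}V .
\]

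Finally, multiply both sides by $r^{2-n}$ and track the powers:
\[
r^{2-n}\left(\frac rR\right)^{n-n/q}=r^{2-n/q}R^{n/q-n}=\left(\frac Rr\right)^{n/q-2}R^{2-n}.
\]
This is exactly the claimed estimate, with $C$ depending only on $n$, $q$ and the reverse H\"older constant. There is no real obstacle here; the only point requiring care is the exponent bookkeeping in this last step, together with the observation that the reverse H\"older inequality must be applied on the large ball $B(x,R)$ rather than the small one.
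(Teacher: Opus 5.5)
Your proof is correct: H\"older's inequality on $B(x,r)$, enlarging to $B(x,R)$, and applying the reverse H\"older inequality on the large ball give $\int_{B(x,r)}V\leq C(r/R)^{n/q'}\int_{B(x,R)}V$, and your exponent bookkeeping checks out. The paper gives no proof of its own and cites Shen \cite{MR1343560}, where the lemma is obtained by exactly this H\"older plus reverse H\"older argument, so your route is the standard one.
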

Moreover, H\"older's inequality and the reverse H\"older
condition imply
\begin{equation}\label{reverseholder4}
\int_{B(x,R)}
\frac{V(y)}{|x-y|^{n-2}}\,dy
\leq
\frac{C}{R^{n-2}}
\int_{B(x,R)}V(y)\,dy.
\end{equation}

We next obtain the local gradient estimate needed for the
resolvent kernel.

\begin{lemma}\label{gradient-lemma}
Suppose $V\in RH_{q_0}$ with
\[
\frac n2<q_0<n.
\]
Let $u$ satisfy
\[
-\Delta u+(V+\lambda)u=0,
\qquad \lambda>0,
\]
in $B(x_0,2R)$. Then, for $x\in B(x_0,R)$,
\begin{align}
|\nabla u(x)|
&\leq
C
\sup_{B(x_0,2R)}|u|
\int_{B(x_0,2R)}
\frac{V(y)|\eta(y)|}{|x-y|^{n-1}}\,dy+
\frac{C}{R^{n+1}}
\int_{B(x_0,2R)}|u(y)|\,dy,
\label{gradient-local}
\end{align}
where $\eta\in C_0^\infty(B(x_0,2R))$ satisfies
\[
\eta=1
\quad\text{on }B(x_0,3R/2),
\]
and
\[
|\nabla\eta|\leq\frac{C}{R},
\qquad
|\nabla^2\eta|\leq\frac{C}{R^2}.
\]
\end{lemma}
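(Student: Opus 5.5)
The plan is to follow Shen's classical argument for the gradient estimate of $-\Delta+V$: a cutoff representation through the fundamental solution of the Laplacian, treating $\lambda u$ as part of the potential, since $V+\lambda\geq0$. Fix $x\in B(x_0,R)$ and let $\eta$ be the cutoff in the statement. Set $v=\eta u$, which is compactly supported in $B(x_0,2R)$. Using the equation $-\Delta u=-(V+\lambda)u$, we get
\[
-\Delta(\eta u)=-\eta(V+\lambda)u-2\nabla\eta\cdot\nabla u-u\Delta\eta .
\]
Hence, with $\Gamma_0(x,y)=c_n|x-y|^{2-n}$,
\[
u(x)=\eta u(x)=\int\Gamma_0(x,y)\Bigl[-\eta(V+\lambda)u-2\nabla\eta\cdot\nabla u-u\Delta\eta\Bigr](y)\,dy .
\]
Next integrate the middle term by parts, so that no derivative falls on $u$:
\[
-2\int\Gamma_0\nabla\eta\cdot\nabla u=2\int u\bigl(\nabla_y\Gamma_0\cdot\nabla\eta+\Gamma_0\Delta\eta\bigr).
\]
Then differentiate in $x$. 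The term $\nabla_x\Gamma_0$ applied to $\eta Vu$ is bounded by
\[
\sup_{B(x_0,2R)}|u|\int_{B(x_0,2R)}\frac{V|\eta|}{|x-y|^{n-1}}\,dy,
\]
which is the first term of \eqref{gradient-local}. The terms carrying $\nabla\eta$ or $\Delta\eta$ are supported in $3R/2\leq|y-x_0|\leq2R$. There $|x-y|\geq R/2$, and the kernels $\nabla_x\nabla_y\Gamma_0$ and $\nabla_x\Gamma_0$ are smooth, of size $R^{-n}$ and $R^{1-n}$. Combined with $|\nabla\eta|\lesssim R^{-1}$ and $|\Delta\eta|\lesssim R^{-2}$, these terms are bounded by $CR^{-n-1}\int_{B(x_0,2R)}|u|$.

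The delicate term is $\lambda\int\nabla_x\Gamma_0(x,y)\eta u\,dy$, because the constant must not depend on $\lambda$. I would not estimate it crudely; that would give a factor $\lambda R^{2}$. Two routes are available.

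The first route is to note that $u$ is a subsolution-type quantity. Since $V+\lambda\geq0$, $|u|$ is subharmonic, and in fact $-\Delta|u|\leq-(V+\lambda)|u|$. Integrating this against a nonnegative Green-type test function gives an identity-based bound
\[
\lambda\int_{B(x_0,7R/4)}|u|\,G\leq C\sup|u|.
\]
Making the $|x-y|^{1-n}$ weight work would need a slightly more refined test function.

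The simpler route is the one I would try first: absorb $\lambda$ into the potential from the start, writing $W=V+\lambda$. The first term then becomes $\sup|u|\int W|\eta|\,|x-y|^{1-n}$, and we must separately show
\[
\lambda\int\eta|u|\,|x-y|^{1-n}\lesssim\sup|u|\int V\eta|x-y|^{1-n}+R^{-n-1}\int|u| .
\]
To do this, use the decay of solutions. Since $|u|$ is subharmonic with $\Delta|u|\geq\lambda|u|$, a standard comparison with $\cosh$-type barriers gives
\[
|u(y)|\leq C\sup|u|\,e^{-c\sqrt\lambda\,\mathrm{dist}(y,\partial B(x_0,2R))}.
\]
This rules out large interior mass. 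Concretely, for $\lambda R^2\leq1$ the term is bounded by $\lambda R\sup|u|\lesssim R^{-1}\sup|u|$, and local boundedness converts $\sup$ over a slightly smaller ball into $R^{-n}\int|u|$. For $\lambda R^2>1$, the exponential decay handles it.

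The hypothesis $q_0<n$ enters only to guarantee that the $V$-integral term is finite, and that it is meaningful when later combined with Lemma~\ref{reverseholder3}. Here that is just local integrability of $V|x-y|^{1-n}$, which follows from $V\in L^{q_0}$ with $q_0>n/2$ and Hölder's inequality.

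The main obstacle is this uniform-in-$\lambda$ control of the $\lambda u$ term. I expect the cleanest fix is to keep $(V+\lambda)$ together and invoke the energy and decay bound for $u$.
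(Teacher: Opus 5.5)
Your reduction is sound up to the term $\lambda\int\nabla_x\Gamma_0(x,y)\,\eta u\,dy$. This term appears only because you invert $-\Delta$ rather than $-\Delta+\lambda$. The proposal never bounds it uniformly in $\lambda$, and that is the whole content of the lemma. Route~1 is explicitly left unfinished, and Route~2 does not close. The decay bound $|u(y)|\le C\sup|u|\,e^{-c\sqrt\lambda\operatorname{dist}(y,\partial B(x_0,2R))}$ gives nothing where $\operatorname{supp}\eta$ approaches $\partial B(x_0,2R)$, and the hypotheses on $\eta$ allow this. So for $\lambda R^2>1$ what you obtain there is a multiple of $\sup_{B(x_0,2R)}|u|$ that is neither exponentially small nor multiplied by an integral of $V$. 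Such a term is not controlled by the right-hand side of \eqref{gradient-local}. Take $x_0=0$, $V\equiv\epsilon$ (which lies in every $RH_q$) and $u(y)=e^{\mu y_1}$ with $\mu=\sqrt{\lambda+\epsilon}$. Then the $V$-term is proportional to $\epsilon$, while $\sup_{B(0,2R)}|u|$ exceeds $R^{-n}\int_{B(0,2R)}|u|$ by a factor of order $(\mu R)^{(n+1)/2}$. Your case $\lambda R^2\le1$ has a milder version of the same defect, since local boundedness only controls $|u|$ on a strictly smaller ball. There it is easily repaired by splitting off the annulus near $\partial B(x_0,2R)$, where $|x-y|\gtrsim R$.

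The paper avoids the issue by representing $u\eta$ through the fundamental solution $\Gamma_0(x,y,\lambda)$ of $-\Delta+\lambda$. Since $(-\Delta+\lambda)u=-Vu$, one has $(-\Delta+\lambda)(u\eta)=-Vu\eta-2\nabla u\cdot\nabla\eta-u\Delta\eta$, so no $\lambda$-term is ever created. After the same integration by parts, the $\lambda$-uniform bounds $|\nabla_x\Gamma_0(x,y,\lambda)|\le C|x-y|^{1-n}$ and $|\nabla_x\nabla_y\Gamma_0(x,y,\lambda)|\le C|x-y|^{-n}$ give \eqref{gradient-local} for every admissible $\eta$. If you prefer to keep the Newtonian kernel, you can finish Route~1 with a test function that vanishes to second order on the boundary. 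Test $\Delta|u|\ge\lambda|u|$ against $\phi(y)=(4R^2-|y-x_0|^2)^2/R^2$, which satisfies $|\Delta\phi|\le C$; this yields $\lambda\int_{B(x_0,2R)}\phi|u|\le C\int_{B(x_0,2R)}|u|$. The Hessian bound on $\eta$ gives $\eta\le C\phi/R^2$, which handles the part of the $\lambda$-term with $|y-x|\ge R/4$. On $B(x,R/4)$, use the mean-value inequality for the subharmonic function $|u|$ together with $\phi\gtrsim R^2$ on $B(x_0,3R/2)$. Both pieces are then at most $CR^{-n-1}\int_{B(x_0,2R)}|u|$.
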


\begin{proof}
Let $\Gamma_0(x,y,\lambda)$ denote the fundamental solution of
the free resolvent \(-\Delta+\lambda.\) \\
Since \((-\Delta+\lambda)u=-Vu,\) we have
\begin{align*}
u(x)\eta(x)
&=
\int_{\mathbb{R}^n}
\Gamma_0(x,y,\lambda)
(-\Delta+\lambda)(u\eta)(y)\,dy
\\
&=
\int_{\mathbb{R}^n}
\Gamma_0(x,y,\lambda)
\bigl\{
-Vu\eta
-2\nabla u\cdot\nabla\eta
-u\Delta\eta
\bigr\}(y)\,dy.
\end{align*}
Integrating by parts in the term containing $\nabla u$, we
obtain
\begin{align*}
u(x)\eta(x)
&=
\int_{\mathbb{R}^n}
\Gamma_0(x,y,\lambda)
\{-Vu\eta+u\Delta\eta\}(y)\,dy
\\
&\qquad
+
2\int_{\mathbb{R}^n}
\nabla_y\Gamma_0(x,y,\lambda)
\cdot\nabla\eta(y)\,u(y)\,dy.
\end{align*}
Differentiating with respect to $x$ and using standard
free resolvent estimates yields, for $x\in B(x_0,R)$,
\begin{align}
|\nabla u(x)|
&\leq
C
\int_{B(x_0,2R)}
\frac{V(y)|u(y)||\eta(y)|}
{|x-y|^{n-1}}\,dy
+
\frac{C}{R^{n+1}}
\int_{B(x_0,2R)}|u(y)|\,dy.
\label{gradient}
\end{align}
The desired estimate follows immediately by taking the
supremum of $|u|$ in the first term.
\end{proof}

For later use, recall that the free resolvent satisfies, for
every $N>0$,
\begin{equation}\label{free-resolvent}
|\Gamma_0(x,y,\lambda)|
\leq
\frac{C_N}
{\left(1+\lambda^{1/2}|x-y|\right)^N}
\frac{1}{|x-y|^{n-2}},
\end{equation}
and
\begin{equation}\label{free-gradient-resolvent}
|\nabla_x\Gamma_0(x,y,\lambda)|
\leq
\frac{C_N}
{\left(1+\lambda^{1/2}|x-y|\right)^N}
\frac{1}{|x-y|^{n-1}}.
\end{equation}

We next establish the kernel estimates needed for the proof of
the main theorem. The argument follows the approach of
\cite{MR1343560}, adapted to the present fractional setting.

\begin{lemma}\label{kernelest2}
Let $V\in RH_q$ with $q>n/2$. Then the following statements
hold.

\begin{enumerate}

\item[(i)]
For every $N>0$, there exists $C_N>0$ such that
\begin{equation}\label{kernel-est-i}
|K(y,x)|
\leq
\frac{C_N}
{\left(1+m(x,V)|x-y|\right)^N}
\left\{
\frac{1}{|x-y|^{n-s}}
\int_{B(y,R)}
\frac{V(z)}{|z-y|^{n-1}}\,dz
+
\frac{1}{|x-y|^{n-s+1}}
\right\},
\end{equation}
where
\[
R=\frac{|x-y|}{4}.
\]

\item[(ii)]
Let $K^0$ denote the $\mathbb{R}^n$-valued kernel of the
adjoint of $\nabla(-\Delta)^{-s/2}$. Then
\begin{equation}\label{kernel-est-ii}
|K(y,x)-K^0(y,x)|
\leq
\frac{C}
{|x-y|^{n-s}}
\left\{
\int_{B(y,|x-y|/4)}
\frac{V(u)}{|u-y|^{n-1}}\,du
+
\frac{1}{|x-y|}
\left(
\frac{|x-y|}{\rho_{1}(x)}
\right)^{2-\frac nq}
\right\}.
\end{equation}

\end{enumerate}
\end{lemma}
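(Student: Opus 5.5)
The strategy follows Shen's argument for $\nabla(-\Delta+V)^{-1}$. We work with the resolvent representation \eqref{kernel}, estimate $\nabla_x\Gamma(x,y,\lambda)$ pointwise for each fixed $\lambda$, and then integrate against $\lambda^{-s/2}\,d\lambda$. Write $r=|x-y|$ and $R=r/4$. The function $u(\cdot)=\Gamma(\cdot,x,\lambda)$ solves $-\Delta u+(V+\lambda)u=0$ in $B(y,2R)$, because $x\notin B(y,2R)$. By symmetry of $\Gamma$, the derivative in \eqref{kernel} is then $\nabla u(y)$.

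\textbf{Step 1: pointwise bound for $\nabla u$.} Apply Lemma~\ref{gradient-lemma} with $x_0=y$. Theorem~\ref{resolvent-kernel-bound} controls $\sup_{B(y,2R)}|u|$ and the average term. On $B(y,2R)$ we have $|z-x|\sim r$, and Lemma~\ref{fractauxiliary}-type comparability of $m(\cdot,V)$ lets us pass between $m(z,V)$ and $m(x,V)$. This gives
\[
|\nabla_y\Gamma(y,x,\lambda)|\le \frac{C_N}{(1+\lambda^{1/2}r)^N(1+m(x,V)r)^N}\Big\{\frac1{r^{n-2}}\int_{B(y,R)}\frac{V(z)}{|z-y|^{n-1}}dz+\frac1{r^{n-1}}\Big\}.
\]
The ball in the $V$-integral is shrunk from $2R$ to $R$ by splitting $B(y,2R)$. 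On the part where $|z-y|\ge R$ we use \eqref{reverseholder4} and Lemma~\ref{reverseholder2}; the factor $(rm)^{k_0}$ is absorbed by choosing $N$ large, which yields an extra $1/r^{n-1}$ term.

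\textbf{Step 2: the $\lambda$-integral.} We use
\[
\int_0^\infty\lambda^{-s/2}(1+\lambda^{1/2}r)^{-N}\,d\lambda=Cr^{s-2},
\]
which holds since $s<2$ makes the integral converge at $0$ and $N>2$ makes it converge at $\infty$. Multiplying the bound of Step 1 by $r^{s-2}$ gives exactly \eqref{kernel-est-i}.

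\textbf{Step 3: part (ii).} We compare $\Gamma$ with $\Gamma_0$ through the perturbation identity
\[
\Gamma(y,x,\lambda)-\Gamma_0(y,x,\lambda)=-\int\Gamma_0(y,z,\lambda)V(z)\Gamma(z,x,\lambda)\,dz,
\]
differentiate in $y$, and split the $z$-integral into three regions.

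On $B(y,R)$ we use \eqref{free-gradient-resolvent} together with $|\Gamma(z,x,\lambda)|\lesssim r^{2-n}$. This produces the term $r^{2-n}\int_{B(y,R)}V|z-y|^{1-n}$.

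On $B(x,R)$ and on the far region we use $|\nabla\Gamma_0|\lesssim r^{1-n}$ and the decay of $\Gamma$. The resulting integral is $\int_{B(x,Cr)}V(z)|z-x|^{2-n}dz$ plus a decaying tail. By \eqref{reverseholder4}, this is bounded by $r^{2-n}\int_{B(x,Cr)}V$. When $r\le\rho_1(x)$, Lemma~\ref{reverseholder3} bounds that by $C(r/\rho_1(x))^{2-n/q}$. When $r>\rho_1(x)$, we instead use Lemma~\ref{reverseholder2} together with the $N$-decay.

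Every term carries the factor $(1+\lambda^{1/2}r)^{-N}$, so integrating in $\lambda$ against $\lambda^{-s/2}$ again produces $r^{s-2}$. This gives \eqref{kernel-est-ii}. In the regime $r\ge\rho_1(x)$ we may alternatively subtract the bounds from (i) and the free bound $|K^0|\lesssim r^{s-n-1}$. The right side of \eqref{kernel-est-ii} dominates both there, because $(r/\rho_1)^{2-n/q}\ge1$ in that regime.

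\textbf{Main obstacle.} The delicate point is the far region $|z-x|,|z-y|\gtrsim r$ in Step 3. There one needs both the decay in $\lambda^{1/2}|z-x|$ and the decay in $m|z-x|$ from Theorem~\ref{resolvent-kernel-bound} to make the sum over dyadic shells converge while keeping the uniform factor $(1+\lambda^{1/2}r)^{-N}$. I would handle this by dyadic annuli $2^kr\le|z-x|<2^{k+1}r$, estimating $\int_{B(x,2^{k+1}r)}V$ by Lemmas~\ref{reverseholder2} and~\ref{reverseholder3}, and absorbing the polynomial growth with $N$.
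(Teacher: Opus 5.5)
Your route is the paper's: the resolvent representation \eqref{kernel}, Lemma~\ref{gradient-lemma} combined with Theorem~\ref{resolvent-kernel-bound}, the integral $\int_0^\infty\lambda^{-s/2}(1+\lambda^{1/2}r)^{-N}\,d\lambda\simeq r^{s-2}$, and for (ii) the resolvent identity split into $B(y,R)$, $B(x,R)$ and a far region. Part (i) is fine. Three of your points are correct and are glossed over in the paper: the reduction from $B(y,2R)$ to $B(y,R)$, the use of the $m$-decay on $B(x,R)$ when $r>\rho_1(x)$, and the shortcut $|K-K^0|\le|K|+|K^0|$ for $r\ge\rho_1(x)$. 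In Step 1 you do not need any comparability of $m(\cdot,V)$: by symmetry of $\Gamma$ the decay is directly in $m(x,V)|x-z|$, and Lemma~\ref{fractauxiliary} is about $\rho_s$ anyway.

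The gap is the far region $\{|z-y|\ge R,\ |z-x|\ge R\}$ when $r<\rho_1(x)$. This is the only regime in which (ii) is used in Theorem~\ref{main3}. There you bound $|\nabla_y\Gamma_0(y,z,\lambda)|$ uniformly by $r^{1-n}$. You then try to make the dyadic sum converge using the decay in $\lambda^{1/2}|z-x|$ and $m(x,V)|z-x|$, while keeping only the uniform factor $(1+\lambda^{1/2}r)^{-N}$. This cannot produce the factor $(r/\rho_1(x))^{2-n/q}$. On shells $|z-x|\approx 2^kr$ with $2^kr\le\rho_1(x)$, the $m$-decay is inactive and the $\lambda$-decay disappears as $\lambda\to0$. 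By Lemma~\ref{reverseholder3}, each such shell contributes up to $r^{1-n}(2^kr)^{2-n}\int_{B(x,2^{k+1}r)}V\lesssim r^{1-n}(2^kr/\rho_1(x))^{2-n/q}$. These terms increase in $k$ and sum to order $r^{1-n}$, and for constant $V$ this order is attained. So your ``decaying tail'' is not small: you get $r^{1-n}$ where $r^{1-n}(r/\rho_1(x))^{2-n/q}$ is needed. Integrating the shell-scale factor $(1+\lambda^{1/2}2^kr)^{-N}$ in $\lambda$ would gain only $2^{k(s-2)}$. That rescues the sum only when $q<n/s$, not for all $q>n/2$.

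The missing ingredient is the spatial decay of the free gradient. In the far region $|z-x|\le 5|z-y|$, so $|\nabla_y\Gamma_0(y,z,\lambda)|\lesssim(1+\lambda^{1/2}R)^{-N}|z-x|^{1-n}$. With this, shell $k$ contributes at most $C\,r^{1-n}(r/\rho_1(x))^{2-n/q}\,2^{k(3-n-n/q)}$. This is a convergent geometric series because $n\ge3$. The shells with $2^kr>\rho_1(x)$, summed using Lemma~\ref{reverseholder2} and the $m$-decay, give $\lesssim\rho_1(x)^{1-n}\le r^{1-n}(r/\rho_1(x))^{2-n/q}$. This is what the paper's estimate of $J_3$ does, via the kernel $R^{-1}|z-y_0|^{4-2n}$.
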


\begin{proof}
We first prove part~(i). Set
\[
R=\frac{|x-y|}{4}.
\]
Applying Lemma~\ref{gradient-lemma} to
\[
u(\cdot)=\Gamma(\cdot,x,\lambda)
\]
in a ball centred at $y$ and using
\eqref{Gamma-bound}, we obtain
\begin{align}
|\nabla_y\Gamma(y,x,\lambda)|
&\leq
\frac{C_N}
{\left(1+\lambda^{1/2}R\right)^N
 \left(1+m(x,V)R\right)^N}
\nonumber\\
&\quad\times
\left\{
\frac{1}{R^{n-2}}
\int_{B(y,R)}
\frac{V(z)}{|z-y|^{n-1}}\,dz
+
\frac{1}{R^{n-1}}
\right\}.
\label{gradient-Gamma}
\end{align}
Substituting this into \eqref{kernel}, we find
\begin{align*}
|K(y,x)|
&\leq
\frac{C_N}
{\left(1+m(x,V)R\right)^N}
\left\{
\frac{1}{R^{n-2}}
\int_{B(y,R)}
\frac{V(z)}{|z-y|^{n-1}}\,dz
+
\frac{1}{R^{n-1}}
\right\}
\\
&\qquad\times
\int_0^\infty
\frac{\lambda^{-s/2}}
{\left(1+\lambda^{1/2}R\right)^N}\,d\lambda.
\end{align*}
Making the change of variables
\[
u=\lambda^{1/2}R,
\]
we obtain
\begin{align}
\int_0^\infty
\frac{\lambda^{-s/2}}
{\left(1+\lambda^{1/2}R\right)^N}\,d\lambda
&=
2R^{s-2}
\int_0^\infty
\frac{u^{1-s}}{(1+u)^N}\,du
\nonumber\\
&\leq
C_{N,s}R^{s-2},
\label{lambda-integral}
\end{align}
provided $N>2-s$. Hence
\begin{align*}
|K(y,x)|
&\leq
\frac{C_N}
{\left(1+m(x,V)R\right)^N}
\left\{
\frac{1}{R^{n-s}}
\int_{B(y,R)}
\frac{V(z)}{|z-y|^{n-1}}\,dz
+
\frac{1}{R^{n-s+1}}
\right\}.
\end{align*}
Since $R=|x-y|/4$, this proves
\eqref{kernel-est-i}.

We next prove part~(ii). The resolvent identity gives
\begin{equation}\label{resolvent-identity}
\Gamma(x,y,\lambda)-\Gamma_0(x,y,\lambda)
=
-
\int_{\mathbb{R}^n}
\Gamma_0(x,z,\lambda)
V(z)
\Gamma(z,y,\lambda)\,dz.
\end{equation}
Therefore,
\begin{align*}
&\nabla_y\Gamma(y_0,x_0,\lambda)
-
\nabla_y\Gamma_0(y_0,x_0,\lambda)
=
-
\int_{\mathbb{R}^n}
\nabla_y\Gamma_0(y_0,z,\lambda)
V(z)
\Gamma(z,x_0,\lambda)\,dz.
\end{align*}
Set
\[
R=\frac{|x_0-y_0|}{4}.
\]
Using \eqref{Gamma-bound} and
\eqref{free-gradient-resolvent}, we obtain
\begin{align*}
&|\nabla_y\Gamma(y_0,x_0,\lambda)
-\nabla_y\Gamma_0(y_0,x_0,\lambda)|
\\
&\leq
C_N
\int_{\mathbb{R}^n}
\frac{
(1+\lambda^{1/2}|y_0-z|)^{-N}
(1+\lambda^{1/2}|z-x_0|)^{-N}
V(z)
}
{
|y_0-z|^{n-1}
\{1+m(x_0,V)|z-x_0|\}^{N}
|z-x_0|^{n-2}
}
\,dz.
\end{align*}
We split the integral into
\[
J_1+J_2+J_3,
\]
corresponding respectively to the regions 
\(
|z-y_0|<R,\;|z-x_0|<R,
\)
and
\(|z-y_0|\geq R,\;\; |z-x_0|\geq R.\)
For the first region,
\begin{equation}\label{J1}
J_1
\leq
\frac{C_N}
{(1+\lambda^{1/2}R)^N}
\frac{1}{R^{n-2}}
\int_{B(y_0,R)}
\frac{V(z)}{|z-y_0|^{n-1}}\,dz.
\end{equation}
For the second region, using
\eqref{reverseholder4} and
Lemma~\ref{reverseholder3}, we obtain
\begin{align}
J_2
&\leq
\frac{C_N}
{(1+\lambda^{1/2}R)^N}
\frac{1}{R^{n-1}}
\int_{B(x_0,R)}
\frac{V(z)}{|z-x_0|^{n-2}}\,dz
\nonumber\\
&\leq
\frac{C_N}
{(1+\lambda^{1/2}R)^N}
\frac{1}{R^{n-1}}
\left(\frac{R}{r}\right)^{2-\frac nq},
\label{J2}
\end{align}
where
\[
r=\rho(x_0)=\frac{1}{m(x_0,V)}.
\]
Similarly, for the third region,
\begin{align}
J_3
&\leq
\frac{C_N}
{(1+\lambda^{1/2}R)^N}
\frac{1}{R}
\int_{|z-y_0|\geq R}
\frac{V(z)}
{|z-y_0|^{2n-4}
 \{1+m(x_0,V)|z-x_0|\}^{N}}
\,dz
\nonumber\\
&\leq
\frac{C_N}
{(1+\lambda^{1/2}R)^N}
\frac{1}{R^{n-1}}
\left(\frac{R}{r}\right)^{2-\frac nq}.
\label{J3}
\end{align}

Combining \eqref{J1}--\eqref{J3}, we obtain
\begin{align}
&|\nabla_y\Gamma(y_0,x_0,\lambda)
-\nabla_y\Gamma_0(y_0,x_0,\lambda)|
\nonumber\\
&\leq
\frac{C_N}
{(1+\lambda^{1/2}R)^N}
\frac{1}{R^{n-2}}
\left\{
\int_{B(y_0,R)}
\frac{V(z)}{|z-y_0|^{n-1}}\,dz
+
\frac{1}{R}
\left(\frac{R}{r}\right)^{2-\frac nq}
\right\}.
\label{gradient-difference}
\end{align}
Using \eqref{lambda-integral} in the representation of
$K-K^0$, we conclude that
\begin{align*}
|K(y,x)-K^0(y,x)|
&\leq
\frac{C}
{R^{n-s}}
\left\{
\int_{B(y,R)}
\frac{V(z)}{|z-y|^{n-1}}\,dz
+
\frac{1}{R}
\left(\frac{R}{r}\right)^{2-\frac nq}
\right\},
\end{align*}
where
\[
R=\frac{|x-y|}{4},
\qquad
r=\rho_{1}(x).
\]
This is equivalent to \eqref{kernel-est-ii}.
\end{proof}

We can now prove the main Sobolev estimate.

\begin{theorem}\label{main3}
Suppose
\[
V\in RH_{q_1},
\qquad
\frac{n}{1+s}\leq q_1<n,
\]
and assume that
\[
\rho_{1}(x)\geq1,
\qquad x\in\mathbb{R}^n.
\]
Define $p_0$ by
\begin{equation}\label{p0-definition}
\frac1{p_0}
=
\frac1{q_1}-\frac1n.
\end{equation}
Then, for every
\[
1<p\leq p_0,
\]
we have
\begin{equation}\label{main-riesz-estimate}
\left\|
\nabla\mathcal{Q}^{-1/2}f
\right\|_{L^p}
\leq
C_p
\|f\|_{H^{1-s,p}}.
\end{equation}
Equivalently, by duality,
\begin{equation}\label{rieszbound}
\left\|
\mathcal{Q}^{-1/2}\nabla f
\right\|_{H^{s-1,p'}}
\leq
C_p
\|f\|_{L^{p'}},
\qquad
p_0'\leq p'<\infty.
\end{equation}
\end{theorem}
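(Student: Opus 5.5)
We compare $\mathcal R$ with its free counterpart and prove the dual estimate. Let $T$ be the adjoint kernel operator with kernel $K(y,x)$, and $T^0$ the free one with kernel $K^0(y,x)$. Then $T-T^0$ is bounded on $L^{p'}$ for $p_0'\le p'<\infty$. The free estimate $\|\nabla(-\Delta)^{-s/2}f\|_{L^p}\lesssim\|f\|_{H^{1-s,p}}$ is a standard multiplier fact: $i\xi|\xi|^{-s}(1+|\xi|^2)^{-(1-s)/2}$ is a Mikhlin multiplier away from the origin, and near the origin we use $\rho_1\ge1$ to handle the low frequencies (see the third paragraph). The triangle inequality then reduces the problem to
\[
\|(\mathcal R-\nabla(-\Delta)^{-s/2})f\|_{L^p}\lesssim\|f\|_{L^p}\le\|f\|_{H^{1-s,p}},
\]
the last inequality holding because $1-s>0$. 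By duality it suffices to bound $T-T^0$ on $L^{p'}$. The equivalence with \eqref{rieszbound} is the usual duality between $H^{1-s,p}$ and $H^{s-1,p'}$.

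For the difference we use Lemma~\ref{kernelest2}(ii) on the local region $|x-y|<\rho_1(x)$, and Lemma~\ref{kernelest2}(i) together with the free decay $|K^0(y,x)|\lesssim|x-y|^{s-n-1}$ on the global region. The first local term,
\[
|x-y|^{s-n}\int_{B(y,|x-y|/4)}V(u)|u-y|^{1-n}\,du,
\]
is handled as in Shen's argument. Put $W(y)=\int_{B(y,\rho_1(y))}V(u)|u-y|^{1-n}\,du$. Since $V\in RH_{q_1}$ with $q_1\ge n/(1+s)>n/2$, and self-improvement gives $q_1>n/2$ strictly, $W\in L^{q_1}_{\mathrm{loc}}$ uniformly, by the fractional integration bound $I_1:L^{q_1}\to L^{p_0}$ localised to critical balls. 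The corresponding operator $f\mapsto\int_{|x-y|<\rho_1(x)}|x-y|^{s-n}W(y)|f(y)|\,dy$ is then controlled by H\"older's inequality with exponents $p_0$ and $p_0'$ on each critical ball, using $\rho_1\ge1$. This is where the restriction $p'\ge p_0'$ enters. The second local term,
\[
|x-y|^{s-n-1}\bigl(|x-y|/\rho_1(x)\bigr)^{2-n/q_1}=\rho_1(x)^{n/q_1-2}|x-y|^{s+1-n-n/q_1},
\]
is integrable over $|x-y|<\rho_1(x)$ precisely because $n/q_1<1+s$, which is the hypothesis $q_1\ge n/(1+s)$ (with the endpoint handled by self-improvement). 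Its integral is $\lesssim\rho_1(x)^{s-1}\le1$, again by $\rho_1\ge1$. Schur's test, using the comparability of $\rho_1(x)$ and $\rho_1(y)$ on critical balls, gives $L^{p'}$ boundedness for every $p'$.

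In the global region, the factor $(1+m(x,V)|x-y|)^{-N}$ in (i) and the kernel $|x-y|^{s-n-1}$ are integrable outside $B(x,\rho_1(x))$ with total mass $\lesssim\rho_1(x)^{s-1}\le1$. The same holds for $K^0$, whose mass outside the ball is $\int_{|z|>\rho_1}|z|^{s-n-1}\,dz\sim\rho_1^{s-1}$. The $V$-term of (i) is treated like the local $W$-term, with the $N$-decay allowing summation over dyadic annuli. Schur's test again gives boundedness. The main obstacle is the $V$-term, namely justifying the $L^{p_0}$ integrability of $W$ and the resulting H\"older/Schur argument exactly up to $p'=p_0'$. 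I would follow Shen's Theorem 0.5 proof, inserting the extra factor $|x-y|^{s-1}\le1$ on $|x-y|\ge1$ and $\le|x-y|^{s-1}$ integrably near the diagonal, and track it carefully.
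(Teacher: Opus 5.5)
Your proposal is correct and is essentially the paper's argument. You use the same split of $T$ into the global $K$-part, the local difference $K-K^0$ controlled by Lemma~\ref{kernelest2}(ii), the tail of the free kernel, and $T^0$ treated as a Fourier multiplier, with $\rho_1\ge1$ absorbing the factor $\rho_1^{s-1}$ and $q>n/(1+s)$ giving local integrability. The paper dominates $(T-T^0)f$ pointwise by $[\mathcal M(|f|^{p_1'})]^{1/p_1'}$ via H\"older on dyadic annuli, whereas you use Schur's test for the $V$-free pieces and a localized HLS bound for the $V$-pieces; this is a minor variation.

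Two small corrections:
\begin{enumerate}
\item[(a)] The free multiplier $i\xi|\xi|^{-s}(1+|\xi|^2)^{-(1-s)/2}$ satisfies Mikhlin's condition at the origin too, since it is homogeneous of degree $1-s>0$ there. So $\rho_1\ge1$ plays no role in the free estimate; it is needed only for $T-T^0$.
\item[(b)] For the $W$-term you should use the self-improved exponent $q>q_1$ from the outset, as the paper does, so that $\|W\|_{L^{p_1}}\lesssim\rho_1^{n/q-2}$ on critical balls with $p_1>p_0$. At $q_1=n/(1+s)$ and $p'=p_0'$, your H\"older step with $W\in L^{p_0}$ lands in $L^1$, where the fractional integral is only of weak type.
\end{enumerate}
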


\begin{proof}
It suffices to prove the dual estimate
\eqref{rieszbound}.

Since $V\in RH_{q_1}$ and reverse H\"older classes are
self-improving, there exists \( q_1<q<n \) such that \(V\in RH_q.\)
In particular, \(q>n/(1+s)\).
Define $p_1$ and $p_1'$ by
\begin{equation}\label{p1-definition}
\frac1{p_1}
=
\frac1q-\frac1n,
\qquad
\frac1{p_1'}
=
1-\frac1q+\frac1n.
\end{equation}
Since $q>q_1$, \(p_1'<p_0'.\)
Fix $x\in\mathbb{R}^n$ and put
\[
r=\rho_{1}(x)=\frac1{m(x,V)}.
\]
We decompose
\begin{align}
Tf(x)
={}&
\int_{|y-x|>r}
K(y,x)f(y)\,dy
\nonumber\\
&+
\int_{|y-x|\leq r}
\bigl(K(y,x)-K^0(y,x)\bigr)f(y)\,dy
\nonumber\\
&+
\int_{|y-x|\leq r}
K^0(y,x)f(y)\,dy
\nonumber\\
=:{}&
I(x)+II(x)+III(x).
\label{T-decomposition}
\end{align}
We first estimate the global term $I$.
For $j\geq1$, let
\[
A_j
=
\left\{
y:
2^{j-1}r<|y-x|\leq2^jr
\right\}.
\]
For $y\in A_j$,
\[
|x-y|\sim2^jr,
\qquad
m(x,V)|x-y|\sim2^j.
\]
Using Lemma~\ref{kernelest2}(i), with the decay exponent chosen
sufficiently large, we obtain for large $M>0$
\begin{align}
\left(
\int_{A_j}
|K(y,x)|^{p_1}\,dy
\right)^{1/p_1}
&\leq
C_M2^{-jM}
(2^jr)^{-\frac{n}{p_1'}+s-1}.
\label{global-kernel-Lp}
\end{align}

Indeed, the potential term is controlled by the
Hardy--Littlewood--Sobolev inequality. More precisely, since
\[
\frac1{p_1}
=
\frac1q-\frac1n,
\]
we have
\[
\|I_1 g\|_{L^{p_1}}
\lesssim
\|g\|_{L^q}.
\]
Moreover, for $y\in A_j$ and
$R=|x-y|/4$, the ball $B(y,R)$ is contained in
$B(x,C2^jr)$ for a fixed constant $C>0$. Hence
\begin{align*}
&\left(
\int_{A_j}
\left(
\int_{B(y,R)}
\frac{V(z)}{|z-y|^{n-1}}\,dz
\right)^{p_1}dy
\right)^{1/p_1}
\\
&\lesssim
\|V\|_{L^q(B(x,C2^jr))}.
\end{align*}
By the reverse H\"older condition and
Lemma~\ref{reverseholder2},
\[
\|V\|_{L^q(B(x,C2^jr))}
\lesssim
(2^j)^{k_0}
(2^jr)^{\frac nq-2}.
\]
Therefore,
\begin{align*}
\frac{1}{(2^jr)^{n-s}}
\|V\|_{L^q(B(x,C2^jr))}
&\lesssim
(2^j)^{k_0}
(2^jr)^{-n+s+\frac nq-2}
\\
&=
(2^j)^{k_0}
(2^jr)^{-\frac{n}{p_1'}+s-1},
\end{align*}
because
\[
\frac{n}{p_1'}
=
n-\frac nq+1.
\]
The second term in \eqref{kernel-est-i} satisfies the same
power estimate:
\[
\left(
\int_{A_j}
\frac{dy}
{|x-y|^{(n-s+1)p_1}}
\right)^{1/p_1}
\lesssim
(2^jr)^{-\frac{n}{p_1'}+s-1}.
\]
Choosing the decay exponent in
\eqref{kernel-est-i} sufficiently large absorbs the factor
$(2^j)^{k_0}$ and yields
\eqref{global-kernel-Lp}.

By H\"older's inequality,
\begin{align*}
|I(x)|
&\leq
\sum_{j=1}^{\infty}
\left(
\int_{A_j}|K(y,x)|^{p_1}\,dy
\right)^{1/p_1}
\left(
\int_{A_j}|f(y)|^{p_1'}\,dy
\right)^{1/p_1'}
\\
&\lesssim
\sum_{j=1}^{\infty}
2^{-jM}
(2^jr)^{-\frac{n}{p_1'}+s-1}
\left(
\int_{|y-x|\leq2^jr}
|f(y)|^{p_1'}\,dy
\right)^{1/p_1'}.
\end{align*}
Using the Hardy--Littlewood maximal operator,
\[
\left(
\int_{|y-x|\leq2^jr}
|f(y)|^{p_1'}\,dy
\right)^{1/p_1'}
\lesssim
(2^jr)^{n/p_1'}
\left[
\mathcal{M}(|f|^{p_1'})(x)
\right]^{1/p_1'},
\]
and therefore
\begin{align*}
|I(x)|
&\lesssim
\left[
\mathcal{M}(|f|^{p_1'})(x)
\right]^{1/p_1'}
\sum_{j=1}^{\infty}
2^{-jM}(2^jr)^{s-1}.
\end{align*}
Since $r\geq1$ and $0<s<1$,
\[
r^{s-1}\leq1.
\]
Thus
\[
(2^jr)^{s-1}
\leq
2^{-j(1-s)},
\]
and hence
\begin{equation}\label{est1}
|I(x)|
\lesssim
\left[
\mathcal{M}(|f|^{p_1'})(x)
\right]^{1/p_1'}.
\end{equation}
Next, we estimate the local perturbation term $II$.
For $j\leq0$, let
\[
A_j
=
\left\{
y:
2^{j-1}r<|y-x|\leq2^jr
\right\}.
\]
For $y\in A_j$, set \(R\sim2^jr.\)
By Lemma~\ref{kernelest2}(ii),
\begin{align*}
&\left(
\int_{A_j}
|K(y,x)-K^0(y,x)|^{p_1}\,dy
\right)^{1/p_1}
\\
&\lesssim
\frac{1}{R^{n-s}}
\left[
\left(
\int_{A_j}
\left(
\int_{B(y,R)}
\frac{V(z)}{|z-y|^{n-1}}\,dz
\right)^{p_1}dy
\right)^{1/p_1}
\right.
\\
&\hspace{4cm}\left.
+
R^{\frac n{p_1}-1}
\left(\frac{R}{r}\right)^{2-\frac nq}
\right].
\end{align*}

Since $R\leq r$, Lemma~\ref{reverseholder3} and the
definition of the critical radius imply
\[
\|V\|_{L^q(B(x,CR))}
\lesssim
r^{\frac nq-2}.
\]
Indeed,
\[
\frac{1}{R^{n-2}}
\int_{B(x,CR)}V
\lesssim
\left(\frac{R}{r}\right)^{2-\frac nq},
\]
and the reverse H\"older inequality then gives the preceding
$L^q$ estimate. Hence, by the
Hardy--Littlewood--Sobolev inequality,
\begin{align*}
\left(
\int_{A_j}
\left(
\int_{B(y,R)}
\frac{V(z)}{|z-y|^{n-1}}\,dz
\right)^{p_1}dy
\right)^{1/p_1}
\lesssim
r^{\frac nq-2}.
\end{align*}
The explicit second term has the same order, and therefore
\begin{equation}\label{local-kernel-Lp}
\left(
\int_{A_j}
|K(y,x)-K^0(y,x)|^{p_1}\,dy
\right)^{1/p_1}
\lesssim
R^{-n+s}r^{\frac nq-2}.
\end{equation}
Using H\"older's inequality and the maximal function estimate,
we obtain
\begin{align*}
|II(x)|
&\leq
\sum_{j=-\infty}^{0}
\left(
\int_{A_j}
|K(y,x)-K^0(y,x)|^{p_1}\,dy
\right)^{1/p_1}
\left(
\int_{|y-x|\leq2^jr}
|f(y)|^{p_1'}\,dy
\right)^{1/p_1'}
\\
&\lesssim
\left[
\mathcal{M}(|f|^{p_1'})(x)
\right]^{1/p_1'}
\sum_{j=-\infty}^{0}
R^{-n+s+\frac{n}{p_1'}}
r^{\frac nq-2}.
\end{align*}
Since
\[
-n+s+\frac{n}{p_1'}
=
s+1-\frac nq,
\]
and $R\sim2^jr$, we obtain
\begin{align*}
|II(x)|
&\lesssim
\left[
\mathcal{M}(|f|^{p_1'})(x)
\right]^{1/p_1'}
r^{s-1}
\sum_{j=-\infty}^{0}
(2^j)^{s+1-\frac nq}.
\end{align*}
Again, $r^{s-1}\leq1$. Moreover,
\[
q>\frac{n}{1+s}
\]
implies
\[
s+1-\frac nq>0.
\]
Hence the geometric series converges, and therefore
\begin{equation}\label{est2}
|II(x)|
\lesssim
\left[
\mathcal{M}(|f|^{p_1'})(x)
\right]^{1/p_1'}.
\end{equation}
It remains to treat the free local term $III$ carefully.
Let
\[
T^0f(x)
=
\int_{\mathbb{R}^n}
K^0(y,x)f(y)\,dy
=
\left(\nabla(-\Delta)^{-s/2}\right)^*f(x).
\]
Then
\begin{align}
III(x)
&=
T^0f(x)
-
\int_{|y-x|>r}
K^0(y,x)f(y)\,dy.
\label{free-local-decomp}
\end{align}
The free kernel satisfies
\begin{equation}\label{free-kernel-size}
|K^0(y,x)|
\lesssim
\frac{1}{|x-y|^{n-s+1}}.
\end{equation}
Consequently, the same annular argument used for the second
term in \eqref{kernel-est-i} gives
\begin{equation}\label{free-tail}
\left|
\int_{|y-x|>r}
K^0(y,x)f(y)\,dy
\right|
\lesssim
\left[
\mathcal{M}(|f|^{p_1'})(x)
\right]^{1/p_1'}.
\end{equation}

Combining \eqref{T-decomposition},
\eqref{est1}, \eqref{est2},
\eqref{free-local-decomp}, and
\eqref{free-tail}, we obtain
\begin{equation}\label{T-pointwise}
|Tf(x)|
\lesssim
\left[
\mathcal{M}(|f|^{p_1'})(x)
\right]^{1/p_1'}
+
|T^0f(x)|.
\end{equation}

Since $s-1<0$, the Bessel-potential norm satisfies
\[
\|g\|_{H^{s-1,p'}}
\lesssim
\|g\|_{L^{p'}}.
\]
Moreover, the free operator has order $1-s$, and therefore
\begin{equation}\label{free-Sobolev}
\|T^0f\|_{H^{s-1,p'}}
\lesssim
\|f\|_{L^{p'}}.
\end{equation}
Hence \eqref{T-pointwise} gives
\begin{align*}
\|Tf\|_{H^{s-1,p'}}
&\lesssim
\left\|
\left[
\mathcal{M}(|f|^{p_1'})
\right]^{1/p_1'}
\right\|_{L^{p'}}
+
\|f\|_{L^{p'}}.
\end{align*}

Now assume
\[
p'\geq p_0'.
\]
Since $p_1'<p_0'$, we have
\[
\frac{p'}{p_1'}>1.
\]
The Hardy--Littlewood maximal operator is therefore bounded
on $L^{p'/p_1'}$, and
\begin{align*}
\left\|
\left[
\mathcal{M}(|f|^{p_1'})
\right]^{1/p_1'}
\right\|_{L^{p'}}
&=
\left\|
\mathcal{M}(|f|^{p_1'})
\right\|_{L^{p'/p_1'}}^{1/p_1'}
\\
&\lesssim
\left\|
|f|^{p_1'}
\right\|_{L^{p'/p_1'}}^{1/p_1'}
\\
&=
\|f\|_{L^{p'}}.
\end{align*}
Thus
\[
\|Tf\|_{H^{s-1,p'}}
\leq
C
\|f\|_{L^{p'}},
\qquad
p_0'\leq p'<\infty.
\]
This proves \eqref{rieszbound}.

Finally, by duality,
\[
\left\|
\nabla\mathcal{Q}^{-1/2}f
\right\|_{L^p}
\leq
C_p
\|f\|_{H^{1-s,p}},
\qquad
1<p\leq p_0.
\]
The proof is complete.
\end{proof}
\begin{remark}
The assumption $\rho_1(x)\geq 1$ for all $x\in\mathbb{R}^n$
is satisfied by a class of bounded potentials.
Indeed, let $W\in L^\infty(\mathbb{R}^n)$ be non-negative and set
\[
V(x)=A(1+W(x)),
\qquad
0<A\leq \frac{1}{|B(0,1)|(1+\|W\|_{L^\infty})}\]
Furthermore, for $r=1,$
\[
\int_{B(x,1)}V(y)\,dy
\leq A\,|B(0,1)|\bigl(1+\|W\|_{L^\infty}\bigr)
\leq 1
\]
for every $x\in\mathbb{R}^n$. Thus, the definition of the
critical radius gives $\rho_1(x)\geq 1$ uniformly in $x$.

\end{remark}

\section*{Acknowledgement}
The first author acknowledges support from the Anusandhan National Research Funding grant ANRF/ARGM/2025/002342/MTR. The second author gratefully acknowledge the Indian Institute of Technology Delhi for providing the Institute Assistantship.\\ The authors are sincerely grateful to Prof. Sayan Bagchi and Ms. Prerna Gulia for sharing their time and suggestions through stimulating and fruitful discussions.

\section*{Data Availability} 
    Data sharing does not apply to this article as no datasets were generated or analyzed during the current study.

\section*{Competing Interests}
    The authors declare that they have no competing interests. 

\bibliographystyle{acm}
\bibliography{ref}
\end{document}